\theoremstyle{plain}
\newtheorem{theorem}[subsection]{Theorem}
\newtheorem{proposition}[subsection]{Proposition}
\newtheorem{lemma}[subsection]{Lemma}
\newtheorem{corollary}[subsection]{Corollary}
\theoremstyle{definition}
\newtheorem{definition}[subsection]{Definition}
\newtheorem{example}[subsection]{Example}
\theoremstyle{remark}
\newtheorem{remark}[subsection]{Remark}
\newcommand{\tenscorep}{\mathbin{\begin{tikzpicture}[baseline,x=.75ex,y=.75ex] \draw (-0.8,1.15)--(0.8,1.15);\draw(0,-0.25)--(0,1.15); \draw (0,0.75) circle [radius = 1];\end{tikzpicture}}}
\numberwithin{equation}{section}
\title{Anyonic Quantum Symmetries of Finite Spaces}
\author[Anshu]{Anshu}
\address{Department of Mathematics and Statistics, York University, Ross Bldg, 4700 Keele ST, Toronto, ON M3J 1P3}
\address{Fields Institute for Research in Mathematical Sciences\\
222, College ST, Toronto, ON, M5T 3J1}
\email{anshu@fields.utoronto.ca}
\author[Bhattacharjee]{Suvrajit Bhattacharjee}
\address{Matematisk institutt, Universitetet i Oslo, P.O. Box 1053, Blindern, 0316 Oslo, Norway}
\email{suvrajib@math.uio.no}
\author[Rahaman]{Atibur Rahaman}
\address{Department of Mathematics \& Statistics, Indian Institute of Science Education and Research Kolkata, Mohanpur, 741246, India}
\email{atibur.pdf@iiserkol.ac.in}
\author[Roy]{Sutanu Roy}
\address{School of Mathematical Sciences, National Institute of Science Education and Research Bhubaneswar, Jatni, 752050, India}
\address{Homi Bhabha National Institute, Training School Complex, Anushaktinagar, Mumbai, 400094, India}
\email{sutanu@niser.ac.in}
\begin{document}

\begin{abstract}
We construct a braided analogue of the quantum permutation group and show that
it is the universal braided compact quantum group acting on a finite space in
the category of 
$\mathbb{Z}/N\mathbb{Z}$\nobreakdash-$\textup{C}^*$\nobreakdash-algebras with a
twisted monoidal structure. As an application, we prove the existence of braided
quantum symmetries of finite, simple, undirected, circulant graphs, explicitly
compute it for several examples, and obtain a generalization of a result of
Banica in this direction. Finally, in an appendix, we briefly describe the
irreducible representations of this braided analogue of the quantum permutation group and their fusion rules.       
\end{abstract}

\subjclass{81R50; 46L89.}

\keywords{quantum symmetries; braided compact quantum groups; anyonic quantum permutation groups; bosonization.}

\maketitle

\section{Introduction}

The theory of braided compact quantum groups, as introduced in \cite{mrw2016}, is a new development in the rich and beautiful theory of compact quantum groups~\cite{woropseudo}. One of the motivations behind the development is the desire to allow non-zero complex deformation parameters $q$ in defining $\textup{SU}_q(2)$, constructed by Woronowicz (\cite{worosu}) for $q$ in the unit interval $(0,1)$. Going beyond the real case brings forth new features; the comultiplication $\Delta_{\textup{SU}_q(2)}$ does not take values in the minimal tensor product $\textup{C}(\textup{SU}_q(2)) \otimes \textup{C}(\textup{SU}_q(2))$ anymore. To find out the receptacle of the comultiplication $\Delta_{\textup{SU}_q(2)}$, one has to take into account a hidden $\mathbb{T}$\nobreakdash-action on $\textup{C}(\textup{SU}_q(2))$, plus a twisting by the nontrivial bicharacter on $\mathbb{Z}$ governed by the unit complex number $\zeta=q/\overline{q}$, which roughly speaking, measures how much the case under consideration deviates from the real case. One then has to, accordingly, twist the minimal tensor product $\otimes$ to obtain a new monoidal structure $\boxtimes_{\zeta}$, depending on the parameter $\zeta$.

The formalism, which the twisting $\boxtimes_{\zeta}$ described in the previous paragraph is based on, is the theory of quantum group\nobreakdash-twisted tensor product, or braided tensor product as we call it, of two $\textup{C}^*$\nobreakdash-algebras, laid out by Meyer, the fourth author and Woronowicz in \cites{mrw2014,mrw2016}. Originally introduced by Vaes in his seminal paper \cite{vaes}, this braided tensor product has played a fundamental role in the work of Nest and Voigt \cite{nestvoigt}, making contact with bivariant theory and enabling one to generalize Poincar\'e duality to the equivariant setting - an essential ingredient in the theory of noncommutative manifolds due to Connes \cite{connes}. In \cite{nestvoigt}, the authors start with a (locally compact) quantum group $G$ and two $G$\nobreakdash-Yetter\nobreakdash-Drinfeld $\textup{C}^*$\nobreakdash-algebras $A$ and $B$ and construct $A \boxtimes B$, the braided tensor product of $A$ and $B$. The algebras $A$ and $B$ can equivalently be described as $\textup{D}(G)$\nobreakdash-$\textup{C}^*$\nobreakdash-algebras, where $\textup{D}(G)$ is the Drinfeld double of the quantum group $G$. The Drinfeld double $\textup{D}(G)$ of $G$ carries an $\textup{R}$\nobreakdash-matrix and is the archetypal example of a quasitriangular quantum group. This is the starting point of \cite{mrw2016}; a quasitriangular quantum group $G$ with $\textup{R}$-matrix $\textup{R}$ and two $G$\nobreakdash-$\textup{C}^*$\nobreakdash-algebras $A$ and $B$. The authors then use the braidings associated to the $\textup{R}$\nobreakdash-matrix $\textup{R}$ and constructs the braided tensor product $A \boxtimes_{\textup{R}} B$, generalizing the construction in \cite{nestvoigt}. 

Let then $G$ be a quasitriangular quantum group with a fixed
$\textup{R}$\nobreakdash-matrix $\textup{R}$. The category of
$G$\nobreakdash-$\textup{C}^*$\nobreakdash-algebras (with morphisms as in
Section \ref{sec:bcqg}) becomes a monoidal category with respect to the braided
tensor product $\boxtimes_{\textup{R}}$. A braided compact quantum group over
$G$ then consists of a $G$\nobreakdash-$\textup{C}^*$\nobreakdash-algebra $A$
and a $G$\nobreakdash-equivariant morphism, the comultiplication map, $\Delta
\in \mathrm{Mor}^{G}(A, A \boxtimes_{\textup{R}} A)$ satisfying axioms similar
to those of an ordinary compact quantum group but the minimal tensor product
replaced with $\boxtimes_{\textup{R}}$. We observe that we thus require two
kinds of noncommutativity, as proposed by Majid in \cite{MR1415857}, to define
braided quantum groups. The first kind is called \emph{inner noncommutativity},
which corresponds to the underlying $\textup{C}^*$\nobreakdash-algebra $A$ being
noncommutative, whereas the second kind is the \emph{outer noncommutativity},
corresponding to the noncommutative tensor product $A \boxtimes_{\textup{R}}A$,
that contains the range of the comultiplication map. According to this
dictionary, classical groups are commutative for both kinds. However, the
example of a (locally compact) braided quantum group which is inner commutative
but outer noncommutative was considered by Woronowicz in \cite{MR1647968}. The
quantum $\textup{SU}(2)$ for (nonzero) complex deformation parameter $q$ is the
prime example of a braided compact quantum group over the circle group
$\mathbb{T}$ whose $\textup{R}$-matrix is given by the bicharacter $\textup{R} :
\mathbb{Z} \times \mathbb{Z} \rightarrow \mathbb{T}$, $(m,n) \mapsto
\zeta^{mn}$, recalling that $\zeta=q/\overline{q}$. Several other examples,
again over the circle group $\mathbb{T}$ with the same
$\textup{R}$\nobreakdash-matrix as above, are constructed in \cites{rahamanroy,
  MR2021, R2021, BJR2022}, the latter three being part of a planned series by
the authors. One of the main motivations behind the series is the beautiful
interplay of quantum groups and bivariant theory in \cite{nestvoigt}, which
provides a rich area of thorough exploration. 

Besides the motivations mentioned in the previous paragraph, there is a fruitful
point of view that motivates us further along this line of investigations. It is
the idea of viewing braided compact quantum groups as symmetry objects of
suitable spaces - an idea of deep interest and rich study in the case of
ordinary compact quantum groups initiated by Wang \cite{wang}, see for example
the introduction of \cite{BJR2022}. The space under consideration is a finite
space consisting of $N$ points, together with the action of its symmetry group,
the permutation group $\textup{S}_N$. However, it is well-known that the group
algebra of a nonabelian finite group does not admit any coquasitriangular
structure (see for instance \cite{montgomery}*{Example 10.2.8}), thus forcing us
to restrict our attention to the action of the cyclic group
$\mathbb{Z}/N\mathbb{Z}$ via the cyclic permutation of length $N$. Fixing a primitive $N$\nobreakdash-th root of unity $\omega$ enables us to
construct an $\textup{R}$\nobreakdash-matrix for the group
$\mathbb{Z}/N\mathbb{Z}$. Consequently, the embeddings of
$\mathbb{Z}/N\mathbb{Z}$\nobreakdash-$\textup{C}^*$\nobreakdash-algebras $A$ and
$B$ in $A\boxtimes_{\textup{R}}B$ commute up to $\omega$, the phase
factor. When $N=2$, this braiding closely resembles fermions, giving rise to the
broader scenario when particles with fractional statistics or \emph{anyons} are
permuted, as introduced in \cite{MR1081990}. In his article~\cite{MR1258379},
Majid witnessed a resemblance between anyons and braided Hopf algebras in the
category of $\mathbb{Z}/N\mathbb{Z}$\nobreakdash-algebras. Extending that
dictionary, a braided compact quantum group over $\mathbb{Z}/N\mathbb{Z}$ is an
analytic counterpart of an anyonic quantum group that appeared in Majid's
work. In this article, we call them anyonic compact quantum groups, the
following theorem providing the first such example:

\begin{theorem}
The anyonic quantum permutation group $\textup{S}^+_N(\textup{R})$ exists for any primitive $N$\nobreakdash-th root of unity $\omega$.
\end{theorem}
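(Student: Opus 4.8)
The plan is to build $\textup{S}^+_N(\textup{R})$ explicitly as a universal object and then check the braided compact quantum group axioms over $\mathbb{Z}/N\mathbb{Z}$ one at a time. First I would let $A\coloneqq\textup{C}\bigl(\textup{S}^+_N(\textup{R})\bigr)$ be the universal unital $\textup{C}^*$-algebra generated by the entries $u_{ij}$, $1\le i,j\le N$, of an $N\times N$ matrix $u=(u_{ij})$, subject precisely to the relations forced by demanding a braided coaction $\alpha\colon\mathbb{C}^N\to\mathbb{C}^N\boxtimes_{\textup{R}}A$ on the finite space: the ``braided magic unitary'' relations making each row and column of $u$ a braided partition of unity (the diagonal relations together with $\sum_j u_{ij}=1=\sum_i u_{ij}$), and the anyonic commutation relations $u_{ij}u_{kl}=\omega^{(i-j)(k-l)}u_{kl}u_{ij}$ dictated by the $\textup{R}$-matrix. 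I would then equip $A$ with the $\mathbb{Z}/N\mathbb{Z}$-action obtained by declaring $u_{ij}$ homogeneous of degree $i-j\pmod N$; since every imposed relation is homogeneous for this grading, this genuinely defines an action, so $A$ is a $\mathbb{Z}/N\mathbb{Z}$-$\textup{C}^*$-algebra, and in particular $A\boxtimes_{\textup{R}}A$ is defined and again a $\mathbb{Z}/N\mathbb{Z}$-$\textup{C}^*$-algebra.

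Next I would define the comultiplication. Writing $j_1,j_2\colon A\to A\boxtimes_{\textup{R}}A$ for the two canonical embeddings, so that $j_2(b)\,j_1(a)=\omega^{(\deg a)(\deg b)}\,j_1(a)\,j_2(b)$ on homogeneous elements, I set $\Delta(u_{ij})\coloneqq\sum_{k=1}^N j_1(u_{ik})\,j_2(u_{kj})$. The key point is to verify that this assignment respects every defining relation of $A$, so that by universality it extends to a unital $*$-homomorphism $\Delta\colon A\to A\boxtimes_{\textup{R}}A$; here the phase factors produced when reordering $j_1$-legs past $j_2$-legs via the braiding must cancel exactly against the $\omega^{(\cdot)(\cdot)}$ factors occurring in the anyonic relations of $A$. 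Equivariance of $\Delta$ is then immediate on generators, since each summand $j_1(u_{ik})j_2(u_{kj})$ has degree $(i-k)+(k-j)=i-j$, whence $\Delta\in\mathrm{Mor}^{\mathbb{Z}/N\mathbb{Z}}(A,A\boxtimes_{\textup{R}}A)$.

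Coassociativity I would check on generators: using associativity of $\boxtimes_{\textup{R}}$ and the compatibility of the three embeddings into the triple braided tensor product, both $(\Delta\boxtimes\mathrm{id})\Delta(u_{ij})$ and $(\mathrm{id}\boxtimes\Delta)\Delta(u_{ij})$ collapse to $\sum_{k,l}j_1(u_{ik})\,j_2(u_{kl})\,j_3(u_{lj})$, and since $\Delta$ is determined by its values on the $u_{ij}$ this is enough. Finally I would establish the cancellation (Podle\'s) conditions $\overline{\Delta(A)(1\boxtimes A)}=A\boxtimes_{\textup{R}}A=\overline{\Delta(A)(A\boxtimes 1)}$ by recovering the individual legs exactly as in Wang's classical computation: multiplying $\Delta(u_{ij})$ on the right by $j_2(u_{kj})$ and using the braided diagonal relations isolates $j_1(u_{ik})j_2(u_{kj})$, and summing over $j$ yields $j_1(u_{ik})$, while $\sum_i j_1(u_{ik})\Delta(u_{ij})=j_2(u_{kj})$; since $A\boxtimes_{\textup{R}}A$ is the closed linear span of $j_1(A)j_2(A)$, this gives both density statements (with the $\omega$-phases bookkept, the second following from the first by applying the $*$-operation, or directly by symmetry).

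The main obstacle is the second step: getting the presentation of $A$ exactly right so that $\Delta$ is well defined. The anyonic commutation relations and the braided magic-unitary relations have to be calibrated so that every phase generated by the braiding in $A\boxtimes_{\textup{R}}A$ cancels when one checks that $\Delta$ preserves each relation; a discrepancy by a single power of $\omega$ would already obstruct the construction, and conversely, once the calibration is pinned down, equivariance, coassociativity and the cancellation conditions are essentially formal (if phase-heavy) verifications. As a consistency check one could instead pass to the bosonization and identify it with an ordinary compact quantum group assembled from Wang's $\textup{S}^+_N$ and $\textup{C}^*(\mathbb{Z}/N\mathbb{Z})$, deducing the existence of the braided object from that of its bosonization.
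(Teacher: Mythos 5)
Your overall architecture (universal algebra, $\mathbb{Z}_N$-grading, comultiplication defined on generators, coassociativity and cancellation checked on generators, bosonization as a consistency check) matches the paper's, but the presentation you propose for $\textup{C}(\textup{S}^+_N(\textup{R}))$ does not work, and repairing it is precisely the nontrivial content of the construction. Your two requirements are incompatible: if $u_{ij}$ is homogeneous of degree $i-j$, then the row/column relations $\sum_j u_{ij}=1=\sum_i u_{ij}$ are not homogeneous (the summands have distinct degrees while $1$ has degree $0$), so the $\mathbb{Z}/N\mathbb{Z}$-action you declare does not descend to the quotient except by forcing $u_{ij}=0$ for $i\neq j$. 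This reflects the fact that the natural $\mathbb{Z}_N$-action on $\textup{C}(X_N)$ is by translation, so the minimal projections $p_i$ are not homogeneous; only their Fourier transforms $P_j=\frac{1}{N}\sum_i\omega^{ij}p_i$ are. Accordingly, the paper's generators $q_{ij}$ correspond (up to normalization) to the conjugate $\Omega^{-1}u\Omega$ of a magic unitary (Lemma \ref{lem:magic}), and the defining relations (1)--(4) of Definition \ref{def:permutationalg} are the Fourier-transformed, $\omega$-twisted versions of the magic-unitary relations; these \emph{are} homogeneous, which is what makes the grading well defined.

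Second, you must not impose the commutation relations $u_{ij}u_{kl}=\omega^{(i-j)(k-l)}u_{kl}u_{ij}$ inside the algebra itself. The $\textup{R}$-matrix produces phases only between the two legs $j_1(A)$ and $j_2(A)$ of $A\boxtimes_{\textup{R}}A$; no commutation relations among the generators of $A$ are needed for $\Delta$ to be well defined (Proposition \ref{prop:comult} verifies relations (1)--(4) for $Q_{ij}=\sum_k j_1(q_{ik})j_2(q_{kj})$ using only the cross-leg braiding), and imposing them yields a proper quotient --- the universal algebra is genuinely noncommutative for $N\geq 4$ and satisfies no such uniform commutation rule --- which would destroy the terminality established in Theorem \ref{thm:main}. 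A further point you gloss over: to know the universal $\textup{C}^*$-algebra exists one needs $\|q_{ij}\|\leq 1$, which the paper extracts from unitarity of $q$ and $\overline{q}_{\textup{R}}$ (Theorem \ref{thm:quotient}) rather than from projection relations. So the ``calibration'' you correctly identify as the main obstacle is resolved not by adjusting phases in your relations but by changing generators.
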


As one expects, the anyonic quantum permutation group should be a quantum subgroup of an anyonic free unitary quantum group. The difficulty in defining such an analogue, as pointed out in \cite{BJR2022}, is the definition of the conjugate representation of a unitary representation. However, we settled this in \cite{BJR2022} over the circle group $\mathbb{T}$ and a braided free unitary quantum group $\textup{U}^+_{\zeta}(F)$ was constructed. In this article, we first achieve the same construction but over the cyclic group $\mathbb{Z}/N\mathbb{Z}$, i.e., we construct the anyonic free unitary quantum group $\textup{U}^+_{N}(\textup{R})$ and then show that $\textup{S}^+_N(\textup{R})$ is a quantum subgroup, as stated in the theorems below.

\begin{theorem}
The anyonic free unitary quantum group $\textup{U}^+_N(\textup{R})$ exists for any primitive $N$\nobreakdash-th root of unity $\omega$.    
\end{theorem}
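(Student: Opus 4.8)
The plan is to transplant the construction of the free unitary quantum group $\textup{U}^{+}(F)$ of Van Daele and Wang, together with its braided avatar $\textup{U}^{+}_{\zeta}(F)$ over $\mathbb{T}$ from \cite{BJR2022}, into the monoidal category of $\mathbb{Z}/N\mathbb{Z}$\nobreakdash-$\textup{C}^{*}$\nobreakdash-algebras equipped with $\boxtimes_{\textup{R}}$. Fix $F\in\mathrm{GL}_{N}(\mathbb{C})$ and let $\mathcal{A}$ be the universal unital $*$\nobreakdash-algebra with generators $u_{ij}$, $1\le i,j\le N$, where $u_{ij}$ is declared $\mathbb{Z}/N\mathbb{Z}$\nobreakdash-homogeneous of degree $i-j\bmod N$ (the grading dictated by requiring the comultiplication below to be equivariant, since equivariance forces $\deg u_{ij}$ to be a coboundary $g(i)-g(j)$), subject to the relations that the matrix $u=(u_{ij})$ is unitary and that its braided contragredient $u^{\textup{R}}$ is unitary after conjugation by $F$. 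Crucially, $u^{\textup{R}}$ is \emph{not} the naive conjugate $(u_{ij}^{*})$: since the $*$\nobreakdash-operation reverses the $\mathbb{Z}/N\mathbb{Z}$\nobreakdash-degree while the braiding does not commute with it, one must twist $(u_{ij}^{*})$ by suitable powers of $\omega$ — conceptually, pass to the corepresentation carried by the braided dual of $\mathbb{C}^{N}$ — so that $u^{\textup{R}}$ is again a genuine braided corepresentation. As unitaries have norm one, the universal $\textup{C}^{*}$\nobreakdash-completion $A\coloneqq\textup{C}(\textup{U}^{+}_{N}(\textup{R}))$ exists; it is nonzero because it admits the counit-type character $u_{ij}\mapsto\delta_{ij}$, and the grading integrates to a continuous $\mathbb{Z}/N\mathbb{Z}$\nobreakdash-action, so that $A$ is an object of our category.

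Next I would build the comultiplication. Writing $j_{1},j_{2}$ for the canonical morphisms of $A$ into $M(A\boxtimes_{\textup{R}}A)$, set $\Delta(u_{ij})=\sum_{k=1}^{N}j_{1}(u_{ik})\,j_{2}(u_{kj})$; by the choice of grading, each summand — hence $\Delta(u_{ij})$ — is homogeneous of degree $i-j$. One must then check that $\Delta(u)$ and $\Delta(u^{\textup{R}})$, computed inside $M(A\boxtimes_{\textup{R}}A)$, satisfy the defining relations of $\mathcal{A}$. This is where the braiding intervenes: the commutation rule $j_{2}(b)\,j_{1}(a)=\omega^{\deg(a)\deg(b)}\,j_{1}(a)\,j_{2}(b)$ for homogeneous $a,b$ is used to make the phase factors appearing in $\Delta(u)^{*}\Delta(u)$, $\Delta(u)\Delta(u)^{*}$ and their $u^{\textup{R}}$\nobreakdash-counterparts telescope, and it is precisely the bicharacter identity for $\textup{R}$, together with the coboundary form of the grading, that makes them cancel. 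Universality then yields a $\mathbb{Z}/N\mathbb{Z}$\nobreakdash-equivariant morphism $\Delta\in\mathrm{Mor}^{\mathbb{Z}/N\mathbb{Z}}(A,A\boxtimes_{\textup{R}}A)$.

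It then remains to verify the axioms of a braided compact quantum group. Coassociativity $(\Delta\boxtimes_{\textup{R}}\mathrm{id})\circ\Delta=(\mathrm{id}\boxtimes_{\textup{R}}\Delta)\circ\Delta$ is checked on the generators and reduces, through the coherence isomorphisms of $\boxtimes_{\textup{R}}$ recorded in \cite{mrw2016}, to both sides sending $u_{ij}$ to $\sum_{k,l}u_{ik}\boxtimes_{\textup{R}}u_{kl}\boxtimes_{\textup{R}}u_{lj}$. For the Podle\'s (cancellation) conditions — density of $\Delta(A)(1\boxtimes_{\textup{R}}A)$ and of $\Delta(A)(A\boxtimes_{\textup{R}}1)$ — one recovers both legs of the generators: from unitarity of $u$ one gets $\sum_{j}\Delta(u_{ij})\,j_{2}(u_{kj}^{*})=j_{1}(u_{ik})$ and $\sum_{i}j_{1}(u_{ik}^{*})\,\Delta(u_{ij})=j_{2}(u_{kj})$, while the analogous manipulations with $u^{\textup{R}}$ — this is exactly why $u^{\textup{R}}$, and not merely $u$, is required to be unitary, as in Van Daele–Wang — recover $j_{1}(u_{ik}^{*})$ and $j_{2}(u_{kj}^{*})$. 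Since the $u_{kj},u_{kj}^{*}$ generate $A$, closing under products and the one-sided module structure gives the two density statements.

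The main obstacle, as flagged in the introduction, is the correct definition of the braided contragredient $u^{\textup{R}}$ and the proof that it is a braided unitary corepresentation: one cannot take plain conjugates, and pinning down the $\omega$\nobreakdash-twist so that $u^{\textup{R}}$ is equivariant, so that ``$Fu^{\textup{R}}F^{-1}$ is unitary'' is a meaningful relation, and so that this relation is preserved by $\Delta$, is the genuinely anyonic ingredient; here the techniques of \cite{BJR2022} over $\mathbb{T}$ must be adapted rather than simply quoted. Once $u^{\textup{R}}$ is in place, the remaining verifications are braided bookkeeping versions of Wang's original computations.
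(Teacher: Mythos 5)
Your route is the paper's route: take the universal $\textup{C}^*$\nobreakdash-algebra on generators $u_{ij}$ with $u$ and a suitably $\omega$\nobreakdash-twisted conjugate both unitary, grade $u_{ij}$ by $\pm(j-i)$, set $\Delta(u_{ij})=\sum_k j_1(u_{ik})j_2(u_{kj})$, and verify equivariance, coassociativity and bisimplifiability by the braided-commutation bookkeeping (the paper simply cites the analogous Propositions 2.17--2.18 of \cite{BJR2022} for these verifications, and it does not carry an auxiliary matrix $F$; your $F\in\mathrm{GL}_N(\mathbb{C})$ is a harmless generalization, with the paper's object being the case $F=\mathrm{id}$).

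The one genuine gap is that you never actually define the braided conjugate: you correctly identify that $(u_{ij}^*)$ must be twisted by powers of $\omega$ and that this is the crux, but you leave the twist as a placeholder to be ``adapted from \cite{BJR2022}.'' Since the object $\textup{U}^+_N(\textup{R})$ \emph{is} this choice of twist, the construction is incomplete without it. The paper's Definition \ref{def:unitaryalg} takes $\overline{u}_{\textup{R}}=(\omega^{-i(j-i)}u^*_{ij})_{i,j\in\mathbb{Z}_N}$ (with $\deg(u_{ij})=j-i$). The exponent is pinned down by requiring $\overline{u}_{\textup{R}}$ to be a braided corepresentation: applying $*$ to $\Delta(u_{ij})$ and reordering with $j_2(y)j_1(x)=\omega^{-\deg(x)\deg(y)}j_1(x)j_2(y)$ gives $\Delta(u_{ij}^*)=\sum_k\omega^{-(i-k)(k-j)}j_1(u_{ik}^*)j_2(u_{kj}^*)$, and the identity $-i(j-i)-(i-k)(k-j)=-i(k-i)-k(j-k)$ is exactly the statement that the phases $\omega^{-i(j-i)}$ make $\Delta\bigl((\overline{u}_{\textup{R}})_{ij}\bigr)=\sum_k j_1\bigl((\overline{u}_{\textup{R}})_{ik}\bigr)j_2\bigl((\overline{u}_{\textup{R}})_{kj}\bigr)$. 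With that formula in hand, the rest of your outline (norm bounds, nontriviality via $u_{ij}\mapsto\delta_{ij}$, coassociativity, and the Podle\'s conditions via unitarity of both $u$ and $\overline{u}_{\textup{R}}$) goes through as you describe and coincides with the paper's argument.
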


\begin{theorem}
The anyonic quantum permutation group $\textup{S}^+_N(\textup{R})$ is a quantum subgroup \textup{(}in the braided sense, see Theorem \textup{\ref{thm:quotient})} of the anyonic free unitary quantum group $\textup{U}^+_N(\textup{R})$. 
\end{theorem}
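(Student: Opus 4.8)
The plan is to produce a surjective $\mathbb{Z}/N\mathbb{Z}$\nobreakdash-equivariant morphism $\pi\in\mathrm{Mor}^{\mathbb{Z}/N\mathbb{Z}}\bigl(\textup{C}(\textup{U}^+_N(\textup{R})),\textup{C}(\textup{S}^+_N(\textup{R}))\bigr)$ intertwining the two comultiplications, and then to quote the characterisation of braided quantum subgroups in Theorem~\ref{thm:quotient}. Write $u=(u_{ij})$ for the fundamental braided unitary of $\textup{U}^+_N(\textup{R})$, so that $\textup{C}(\textup{U}^+_N(\textup{R}))$ is the universal $\mathbb{Z}/N\mathbb{Z}$\nobreakdash-$\textup{C}^*$\nobreakdash-algebra generated by homogeneous elements $u_{ij}$ subject to the requirement that $u$ and its braided conjugate $u^{\mathrm{c}}$ (formed with respect to the fixed invertible matrix $F$) be braided unitaries, with $\Delta_{\textup{U}}(u_{ij})=\sum_k u_{ik}\boxtimes_{\textup{R}}u_{kj}$; and write $v=(v_{ij})$ for the fundamental anyonic magic unitary of $\textup{S}^+_N(\textup{R})$, with $\Delta_{\textup{S}}(v_{ij})=\sum_k v_{ik}\boxtimes_{\textup{R}}v_{kj}$.

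The first step is to check that $v$ is a braided unitary. The entries $v_{ij}$ are homogeneous of the degrees forced by the coaction on the finite space, and the anyonic analogues of the magic\nobreakdash-unitary relations --- the $\omega$\nobreakdash-twisted orthogonality of the entries in each fixed row and column together with the twisted partition\nobreakdash-of\nobreakdash-unity identities --- yield $vv^{*}=v^{*}v=I$ once the phase factors produced by $\boxtimes_{\textup{R}}$ are taken into account. The crux is the second step: that $v^{\mathrm{c}}$ is again a braided unitary. Here the anyonic features concentrate, because the entries $v_{ij}$ need not be self\nobreakdash-adjoint, so the classical identity $\bar u=u$ for a magic unitary has to be replaced by the correct $\textup{R}$\nobreakdash-matrix\nobreakdash-twisted relation; I would fix $F$ to be the diagonal matrix whose entries are the powers of $\omega$ dictated by the degrees of the $v_{ij}$ and show that $v^{\mathrm{c}}$ then coincides with $v$ up to that diagonal conjugation, so that its braided unitarity is inherited from the first step. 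Modulo the relations of $\textup{S}^+_N(\textup{R})$ this reduces to a finite bookkeeping of phases, and I expect it to be the only genuine obstacle in the proof.

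Granting these two steps, the universal property of $\textup{U}^+_N(\textup{R})$ furnishes a $\mathbb{Z}/N\mathbb{Z}$\nobreakdash-equivariant morphism $\pi$ with $\pi(u_{ij})=v_{ij}$; it is surjective because the $v_{ij}$ generate $\textup{C}(\textup{S}^+_N(\textup{R}))$. Since $\pi$ is equivariant, $\pi\boxtimes_{\textup{R}}\pi$ is defined, and evaluating on generators gives $(\pi\boxtimes_{\textup{R}}\pi)\circ\Delta_{\textup{U}}=\Delta_{\textup{S}}\circ\pi$ immediately from the two matrix\nobreakdash-coproduct formulas. By Theorem~\ref{thm:quotient} this exhibits $\textup{S}^+_N(\textup{R})$ as a quantum subgroup of $\textup{U}^+_N(\textup{R})$, with the comultiplication of $\textup{S}^+_N(\textup{R})$ recovered as the restriction of that of $\textup{U}^+_N(\textup{R})$.
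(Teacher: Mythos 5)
Your proposal follows essentially the same route as the paper: one verifies that the fundamental matrix $q=(q_{ij})$ of $\textup{S}^+_N(\textup{R})$ and its braided conjugate $\overline{q}_{\textup{R}}=(\omega^{-i(j-i)}q^*_{ij})$ are unitaries, and the universal property of $\textup{C}(\textup{U}^+_N(\textup{R}))$ then yields the (necessarily surjective) $\mathbb{Z}_N$\nobreakdash-equivariant Hopf $*$\nobreakdash-homomorphism $u_{ij}\mapsto q_{ij}$ of Theorem~\ref{thm:quotient}, the Hopf property being immediate from the matrix coproducts. The one imprecision is in your second step: the conjugate is not a diagonal twist of $q$ but, by relation (2) of Definition~\ref{def:permutationalg}, satisfies $(\overline{q}_{\textup{R}})_{ij}=q_{-i,-j}$, i.e.\ it is $q$ conjugated by the index\nobreakdash-negation permutation, so its unitarity does reduce to that of $q$, though not by the mechanism you sketch (the paper simply verifies it directly from relations (2)--(4)).
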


The theorems stated above join hands in obtaining the main result of this article, which presents the anyonic quantum permutation group $\textup{S}^+_N(\textup{R})$ as the anyonic quantum symmetry group of the finite space $X_N$ consisting of $N$ points.

\begin{theorem}\label{thm:symm}
The anyonic quantum symmetry group of the finite space $X_N$ is the anyonic quantum permutation group $\textup{S}^+_N(\textup{R})$.
\end{theorem}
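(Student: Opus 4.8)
\textbf{Proof proposal for Theorem \ref{thm:symm}.}

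The plan is to mirror, in the anyonic setting, the now-standard argument of Wang that realizes $\textup{S}_N^+$ as the universal compact quantum group acting on the $N$-point space. First I would make precise the notion of an \emph{anyonic quantum symmetry group of $X_N$}: this should be the universal object in the category whose objects are pairs $(A,\alpha)$ consisting of an anyonic compact quantum group $A$ (a braided compact quantum group over $\mathbb{Z}/N\mathbb{Z}$ with the fixed $\textup{R}$-matrix coming from $\omega$) together with a $\mathbb{Z}/N\mathbb{Z}$-equivariant action $\alpha \in \mathrm{Mor}^{\mathbb{Z}/N\mathbb{Z}}(\textup{C}(X_N), \textup{C}(X_N) \boxtimes_{\textup{R}} A)$ satisfying the usual coassociativity and Podle\'s density conditions, where $\textup{C}(X_N) = \mathbb{C}^N$ is given the $\mathbb{Z}/N\mathbb{Z}$-action by cyclic permutation of the minimal projections (so that it is a genuine $\mathbb{Z}/N\mathbb{Z}$-$\textup{C}^*$-algebra and the braided tensor product makes sense). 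Morphisms are the evident equivariant intertwiners of actions.

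The core of the proof is then to show that the coaction on $\mathbb{C}^N$ forces exactly the defining relations of $\textup{S}_N^+(\textup{R})$. Writing $e_1,\dots,e_N$ for the minimal projections of $\mathbb{C}^N$ and expanding $\alpha(e_i) = \sum_j e_j \boxtimes u_{ji}$ for elements $u_{ji} \in A$, I would unwind the conditions in turn: the fact that $\alpha$ is a unital $*$-homomorphism gives that each $u_{ji}$ is a projection and that $\sum_j u_{ji} = 1 = \sum_i u_{ij}$, i.e.\ $u = (u_{ji})$ is a magic unitary, \emph{except} that the orthogonality of the $e_j$'s now produces the $\omega$-twisted relations $u_{ji} u_{ki} = 0$ for $j \neq k$ computed inside $A \boxtimes_{\textup{R}} A$ rather than inside a plain tensor square — this is precisely the point where the braiding enters and where the multiplication in the braided tensor product must be tracked carefully. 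Coassociativity of $\alpha$ translates into $\Delta_A(u_{ji}) = \sum_k u_{jk} \boxtimes u_{ki}$, i.e.\ $u$ is a (braided) representation of $A$, and the Podle\'s density condition guarantees that the $u_{ji}$ generate $A$ as a $\textup{C}^*$-algebra together with the hidden $\mathbb{Z}/N\mathbb{Z}$-action. By the universal property in the definition of $\textup{S}_N^+(\textup{R})$ (established, for existence, by the first theorem of the excerpt), this data is the same as a surjective morphism of anyonic compact quantum groups $\textup{C}(\textup{S}_N^+(\textup{R})) \to A$ intertwining the canonical coaction with $\alpha$; conversely the canonical magic unitary over $\textup{C}(\textup{S}_N^+(\textup{R}))$ defines a coaction on $\mathbb{C}^N$, so $\textup{S}_N^+(\textup{R})$ itself is an object of the category, and the two directions together give that it is terminal.

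I expect the main obstacle to be the careful bookkeeping in the braided tensor product: one must verify that the relations one extracts from ``$\alpha$ is a homomorphism and $\alpha$ is coassociative'' are \emph{literally} the presentation of $\textup{C}(\textup{S}_N^+(\textup{R}))$ — including the compatibility of the generators with the grading $\mathbb{Z}/N\mathbb{Z}$-action, which is automatic only because the $e_i$'s are $\mathbb{Z}/N\mathbb{Z}$-homogeneous of degree $0$ and hence the $u_{ji}$ inherit a prescribed degree, and that the twisted commutation relations in $\mathbb{C}^N \boxtimes_{\textup{R}} A$ and $A \boxtimes_{\textup{R}} A$ match the twisted relations built into $\textup{S}_N^+(\textup{R})$ rather than introducing spurious ones. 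A clean way to sidestep some of this is to pass through the bosonization: by the penultimate theorem of the excerpt, coactions of $A$ on $\mathbb{C}^N$ in the braided category correspond to coactions of the bosonization $A \rtimes \mathbb{Z}/N\mathbb{Z}$ on $\mathbb{C}^N \rtimes \mathbb{Z}/N\mathbb{Z}$ (or, more simply, on $\mathbb{C}^N$ itself viewed inside the crossed product) in the ordinary category, so that universality of $\textup{S}_N^+(\textup{R})$ reduces to Wang's universality of $\textup{S}_N^+$ applied to the bosonized picture, together with the fact — which is exactly what the bosonization equivalence provides — that the idempotent-morphism structure is preserved. I would present the direct braided computation as the main line of argument and remark on the bosonization shortcut as a consistency check.
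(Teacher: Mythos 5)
Your overall architecture --- define the category of anyonic actions on $\textup{C}(X_N)$, extract relations from a coaction, invoke universality to get a morphism to the acting quantum group, and check that the canonical coaction makes $\textup{S}^+_N(\textup{R})$ an object, hence terminal --- is exactly the paper's (Definition \ref{def:cat} through Theorem \ref{thm:main}). But there is a genuine error at the heart of your ``direct braided computation'': you assert that the minimal projections $e_i$ are $\mathbb{Z}_N$\nobreakdash-homogeneous of degree $0$, so that the matrix entries $u_{ji}$ ``inherit a prescribed degree.'' The $\mathbb{Z}_N$\nobreakdash-action on $\textup{C}(X_N)$ is the cyclic \emph{permutation} action $\rho^{\textup{C}(X_N)}(p_j)=\sum_i p_{j-i}\otimes\delta_i$, under which no minimal projection is homogeneous. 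If all the $e_i$ had degree $0$, every $u_{ji}$ would have degree $0$, the braided tensor product would degenerate to the ordinary one on the relevant subalgebras, and you would recover Wang's $\textup{S}_N^+$ rather than the anyonic object. The paper's entire computation is carried out in the Fourier basis $P_j=\frac{1}{N}\sum_i\omega^{ij}p_i$, which \emph{is} homogeneous of degree $j$ and satisfies $P_0=\frac1N$, $P_i^*=P_{-i}$, $P_iP_j=\frac1N P_{i+j}$; expanding $\eta(P_j)=\sum_i j_1(P_i)j_2(a_{ij})$ forces $\deg(a_{ij})=j-i$ and reproduces relations (1)--(3) of Definition \ref{def:permutationalg} verbatim (Proposition \ref{prop:universal}). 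Your expected output (``each $u_{ji}$ is a projection, $u$ is a magic unitary'') is not the presentation of $\textup{C}(\textup{S}_N^+(\textup{R}))$; the two are related only through the Fourier conjugation and degree twist of Lemma \ref{lem:magic}, and without passing to the homogeneous basis you cannot even move $j_2(u_{ji})$ past $j_1(e_{j'})$ to compute $\alpha(e_ie_{i'})$.

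Two further points you gloss over. First, multiplicativity of the coaction only yields \emph{one} family of quadratic relations (relation (3)); the ``other half'' (relation (4)), which in the classical case corresponds to the column sums of the magic unitary and is not a formal consequence of $\alpha$ being a unital $*$\nobreakdash-homomorphism, is obtained in the paper by passing to the bosonization $G\rtimes\mathbb{Z}_N$, which is an honest compact quantum group --- so the bosonization is not merely the ``consistency check'' you relegate it to, but a necessary step even in the direct argument. Second, surjectivity of the induced morphism $\psi_G$ comes from the \emph{faithfulness} of the action (Definition \ref{def:faithful}), which is built into the objects of $\mathcal{C}(X_N)$; the Podle\'s condition alone does not guarantee that the $a_{ij}$ generate $\textup{C}(G)$.
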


As it turns out, the $\textup{C}^*$\nobreakdash-algebra
$\textup{C}(\textup{S}^+_3(\textup{R}))$ is commutative but
$\textup{C}(\textup{S}^+_N(\textup{R}))$ is noncommutative for
$N=4$ onwards. Although the $\textup{C}^*$\nobreakdash-algebra
$\textup{C}(\textup{S}^+_3(\textup{R}))$ is commutative, the comultiplication
still takes values in the braided tensor product. It means that
$\textup{S}^+_3(\textup{R})$ is inner commutative but outer noncommutative, and,
from the perspective of quantum symmerties, it lacks an interpretation. 

As an application of the above theorem, we prove the existence of anyonic
quantum symmetries of a class of graphs. More precisely, we obtain the following
theorem.

\begin{theorem}\label{theo:graph}
Let $\Gamma=(E,V)$ be a finite, simple, undirected, circulant graph. Then the anyonic quantum symmetry group $(\mathrm{Qaut}(\Gamma),\eta^{\Gamma})$ of the graph $\Gamma$ exists.
\end{theorem}

We explicitly compute several examples and obtain a generalization of a result
of Banica in this direction, \cite{banicagraphs}*{Theorem 4.2}.

\begin{theorem}\label{thm:dihedral}
Let $\Gamma$ be a circulant graph with $N\geq 5$
vertices. Consider the polynomial $Q(z)=\sum_{i\in\mathbb{Z}_{N}}a_{i}z^{i}$,
where $a_{i}$ is the element in the $i$-th column of the first row of the
adjacency matrix $A_{\Gamma}$. Suppose the numbers 
$$ 
Q(1),Q(\omega),\dots,Q(\omega^{[\frac{N}{2}]}) 
$$are distinct. Then $\textup{C}(\mathrm{Qaut}(\Gamma))\cong\textup{C}(D_{N})$,
where $D_{N}$ is the dihedral group, and the comultiplication map
$\Delta_{\mathrm{Qaut}(\Gamma)}$ takes values in $\textup{C}(\mathrm{Qaut}(\Gamma))\boxtimes_{\textup{R}}\textup{C}(\mathrm{Qaut}(\Gamma))$.
\end{theorem}

Once again, we observe that $\mathrm{Qaut}(\Gamma)$ in the theorem above is 
inner commutative but outer noncommutative. 

The bosonization construction, introduced in \cites{mrw2016,R2023}, based on the
algebraic Radford (\cite{R1985}) and Majid (\cite{MR1257312}) bosonization (which in
turn is based on the semidirect product construction for groups) provides an
equivalence between braided compact quantum groups and a class of ordinary
quantum groups (those with an idempotent quantum group morphism). The
construction of an  ordinary quantum group from a braided quantum group is
essentially a systematic way of turning the underlying ``interacting'' braided
tensor product of the braided quantum group into a ``non-interacting'' ordinary
tensor product, similar to the construction of ``non-interacting'' bosons from
``interacting'' fermions. Due to this behavior, Majid called  this construction
bosonization. To not interrupt the flow of the article, we push the explicit
description of the bosonization(s) of the anyonic quantum permutation group (and
of the anyonic free unitary quantum group), as in \cite{BJR2022} and following
\cite{MR2021}, to an appendix, which contains the following theorem.

\begin{theorem}
The bosonization $\textup{S}_{N}^+(\textup{R}) \rtimes \mathbb{Z}/N\mathbb{Z}$ of $\textup{S}_{N}^+(\textup{R})$ is a compact quantum group such that $\textup{C}(\textup{S}_{N}^+(\textup{R}) \rtimes \mathbb{Z}/N\mathbb{Z})$ is the crossed product of $\textup{C}(\textup{S}_{N}^+(\textup{R}))$ with $\mathbb{Z}/N\mathbb{Z}$, the $\mathbb{Z}/N\mathbb{Z}$-action being given by the automorphism induced by $\omega$.
\end{theorem}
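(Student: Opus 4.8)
The plan is to obtain both assertions from the general bosonization theorem of \cite{mrw2016}, applied to $\textup{S}_N^+(\textup{R})$ viewed as a braided compact quantum group over the quasitriangular quantum group $\mathbb{Z}/N\mathbb{Z}$, the $\textup{R}$\nobreakdash-matrix being the bicharacter determined by $\omega$; the existence of this braided compact quantum group has already been established above. By \cite{mrw2016}, for every braided compact quantum group $\mathbb{G}$ over a quasitriangular compact quantum group $\mathbb{H}$ the bosonization $\mathbb{G} \rtimes \mathbb{H}$ is an honest compact quantum group, with comultiplication, counit, coinverse and Haar state assembled canonically from the corresponding data of $\mathbb{G}$, of $\mathbb{H}$, and of the $\textup{R}$\nobreakdash-matrix. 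Taking $\mathbb{H} = \mathbb{Z}/N\mathbb{Z}$ then gives at once that $\textup{S}_N^+(\textup{R}) \rtimes \mathbb{Z}/N\mathbb{Z}$ is a compact quantum group, so the real content is the explicit description of its underlying $\textup{C}^*$\nobreakdash-algebra.

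For that I would use that the underlying $\textup{C}^*$\nobreakdash-algebra of a bosonization $\mathbb{G} \rtimes \mathbb{H}$ is the crossed product of $\textup{C}(\mathbb{G})$ by the coaction of $\mathbb{H}$ encoding its $\mathbb{H}$\nobreakdash-$\textup{C}^*$\nobreakdash-algebra structure. Specializing to $\mathbb{H} = \mathbb{Z}/N\mathbb{Z}$: the $\mathbb{Z}/N\mathbb{Z}$\nobreakdash-$\textup{C}^*$\nobreakdash-algebra structure on $\textup{C}(\textup{S}_N^+(\textup{R}))$ is a $\mathbb{Z}/N\mathbb{Z}$\nobreakdash-grading, equivalently an action of $\mathbb{Z}/N\mathbb{Z}$ by $*$\nobreakdash-automorphisms whose generator rescales a homogeneous component of degree $m$ by $\omega^{m}$ --- and this is precisely the automorphism induced by $\omega$. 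The crossed product produced by the bosonization is then the ordinary $\textup{C}^*$\nobreakdash-algebraic crossed product $\textup{C}(\textup{S}_N^+(\textup{R})) \rtimes \mathbb{Z}/N\mathbb{Z}$ along that automorphism; evaluated on the fundamental braided magic unitary $u = (u_{ij})$ of $\textup{C}(\textup{S}_N^+(\textup{R}))$, this recovers the description in the statement.

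To make the compact quantum group on the crossed product fully explicit --- and for later use, following \cite{MR2021} and as in \cite{BJR2022} --- I would record the presentation: $\textup{C}(\textup{S}_N^+(\textup{R}) \rtimes \mathbb{Z}/N\mathbb{Z})$ is generated by a copy of $\textup{C}(\textup{S}_N^+(\textup{R}))$ together with a unitary $z$ satisfying $z^N = 1$ and implementing the $\omega$\nobreakdash-automorphism by conjugation, the comultiplication being $z \mapsto z \otimes z$ and $u_{ij} \mapsto \sum_k u_{ik} \otimes u_{kj}$ corrected by insertions of powers of $z$ --- precisely those powers which turn the braided tensor product $\boxtimes_{\textup{R}}$ occurring in the braided comultiplication of $\textup{S}_N^+(\textup{R})$ into the ordinary minimal tensor product --- and the Haar state being the tensor product of the Haar state of $\textup{S}_N^+(\textup{R})$ with the canonical trace on the copy of $\mathbb{C}[\mathbb{Z}/N\mathbb{Z}]$ generated by $z$.

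The main obstacle is the verification underlying the second paragraph: one must check that the crossed product coming out of the abstract bosonization of \cite{mrw2016} coincides, as a compact quantum group and not merely as a $\textup{C}^*$\nobreakdash-algebra, with the concrete crossed product by the $\omega$\nobreakdash-automorphism. In practice this amounts to tracking how the braiding attached to $\textup{R}$ interacts with the $\mathbb{Z}/N\mathbb{Z}$\nobreakdash-grading and with the two canonical embeddings of $\textup{C}(\textup{S}_N^+(\textup{R}))$ into the crossed product --- that is, showing that the braiding is implemented by conjugation by suitable powers of $z$, so that all $\omega$\nobreakdash-phases are absorbed. These computations are the exact analogues, with the finite cyclic group and its grading in place of the circle group and its $\mathbb{Z}$\nobreakdash-grading, of those carried out in \cite{BJR2022}; the translation is essentially formal, the one genuine point requiring care being the bookkeeping of the $\omega$\nobreakdash-powers.
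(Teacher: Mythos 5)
Your proposal is correct and follows essentially the same route as the paper: the compact quantum group structure is delegated to the general bosonization theorem of \cite{mrw2016}, the underlying $\textup{C}^*$\nobreakdash-algebra is identified with $\textup{C}(\mathbb{Z}_N)\boxtimes_{\textup{R}}\textup{C}(\textup{S}_N^+(\textup{R}))$, i.e.\ the crossed product presented by a unitary $z$ with $z^N=1$ and $zq_{ij}=\omega^{j-i}q_{ij}z$ coming from the braiding and the grading, and the comultiplication is $\Delta(z)=z\otimes z$, $\Delta(q_{ij})=\sum_k q_{ik}\otimes z^{k-i}q_{kj}$. The only difference is one of explicitness: the paper actually carries out the computation of $\Delta(q_{ij})$ via the morphism $\psi$ of Proposition \ref{prop:psi}, whereas you describe the outcome and defer the bookkeeping of the $\omega$\nobreakdash-powers to the analogue in \cite{BJR2022}.
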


We use this theorem to describe the irreducible representations and the fusion
rule of the bosonization $\textup{S}_{N}^+(\textup{R}) \rtimes
\mathbb{Z}/N\mathbb{Z}$. As is known from \cite{MR2021}, the representation
category of $\textup{S}_{N}^+(\textup{R})$ is equivalent to that of the
bosonization $\textup{S}_{N}^+(\textup{R}) \rtimes \mathbb{Z}/N\mathbb{Z}$, thus
yielding complete knowledge of the representation category.

\begin{theorem}
The anyonic quantum permutation group $\textup{S}_{N}^+(\mathrm{R})$ has irreducible representations $r_{(a,a')}$, $0 \leq a \leq N-1$, $a' \in \mathbb{N}$ such that any irreducible representation is unitarily equivalent to exactly one of these and moreover they satisfy the fusion rule
\begin{equation*}
r_{(a,a')} \tenscorep r_{(b,b')} \cong r_{(a+b,|a'-b'|)} \oplus r_{(a+b,|a'-b'|+1)} \oplus \dots \oplus r_{(a+b,a'+b')}. 
\end{equation*}
\end{theorem}

We now briefly describe the organization of this paper. In Section
\ref{sec:bcqg}, after fixing notations, we begin with recalling the definition
of an anyonic quantum group. We then construct the anyonic quantum permutation
group and the anyonic free unitary quantum group (Definition
\ref{def:permutation} and Definition \ref{def:unitary}, respectively). We then
prove that the anyonic quantum permutation group is a quantum subgroup of the
anyonic free unitary quantum group (Theorem \ref{thm:quotient}). Finally, Section \ref{sec:symm}
defines what we call anyonic symmetry of the finite space and proves Theorem
\ref{thm:symm} above (Theorem \ref{thm:main}). The study of the cases $N=3$ and
$N=4$ are also considered in this section. The anyonic quantum symmetries of
graphs is taken up in Section \ref{sec:graph-symmetry}, in which we prove
Theorem \ref{theo:graph} and Theorem \ref{thm:dihedral} (Theorem
\ref{theo:graph-symmetry} and Theorem \ref{theo:dihedral}, respectively)
mentioned above. The Appendix \ref{appendix:boso} describes the bosonization
and the irreducible representations, along with their fusion rule, of $\textup{S}^+_N(\textup{R})$.

To end this Introduction, let us mention that a recent preprint \cite{cocycle-twist} provides a systematic approach to construct braided analogues of known compact quantum groups, based on the transmutation procedure. The anyonic quantum permutation group constructed in this article as well as the ones constructed in \cites{kmrw2016,BJR2022} may also be obtained via this method, see \cite{cocycle-twist}*{Section 3} for the details.

\subsection*{Acknowledgments} The first author was supported jointly by the
Fields Institute and Prof. Ilijas Farah at the York University. She is also
grateful to the National Institute of Science Education and Research Bhubaneswar 
for offering her a postdoctoral position where major portion of this work took
place. The second author was supported by the NFR project 300837 ``Quantum
Symmetry'' and the Charles University PRIMUS grant \textit{Spectral
  Noncommutative Geometry of Quantum Flag Manifolds} PRIMUS/21/SCI/026. He is
grateful to Indian Statistical Institute, Kolkata and Prof. Debashish Goswami
for offering him a visiting scientist position, where this work started. He is
also grateful to SERB, Government of India, for the National postdoctoral
fellowship (PDF/2021/003544) and Indian Institute of Science Education and
Research, Kolkata for hosting him. The third author was supported by a senior
project associate fellowship at National Institute of Science Education and
Research Bhubaneswar from the INSPIRE faculty awards
(DST/INSPIRE/04/2016/000215) given by the D.S.T., Government of India. The fourth author was partially supported by a MATRICS
grant (MTR/2020/000640) given by SERB, Government of India. 

\subsection*{Notations} For two $\textup{C}^*$\nobreakdash-algebras $A$ and $B$, $A \otimes B$ denotes the minimal tensor product of $\textup{C}^*$\nobreakdash-algebras. For a $\textup{C}^*$\nobreakdash-algebra $A$ and two closed subspaces $X,Y \subset A$, $XY$ denotes the norm-closed linear span of the set of products $xy$, $x \in X$ and $y \in Y$. For an object $X$ in some category, $\mathrm{id}_X$ denotes the identity morphism of $X$. For a unital $\textup{C}^*$\nobreakdash-algebra $A$, $1_A$ denotes the unit element in $A$, $\mathcal{M}(A)$ denotes the multiplier algebra of $A$, and $\mathcal{U}(A)$ denotes the group of unitary multipliers of $A$.

\section{Anyonic compact quantum groups - definitions and examples}\label{sec:bcqg}

In this section, we briefly recall what anyonic compact quantum groups are, following \cite{R2021}, the latter being concerned with braided compact quantum groups over $\mathbb{T}$. A slightly more detailed account (of braided compact quantum groups over $\mathbb{T}$) may be found in \cite{BJR2022}, see also \cites{kmrw2016,MR2021}. For a completely general treatment, we refer the reader to \cites{mrw2014,mrw2016}. 

Let $\mathcal{C}^*$ be the category of $\textup{C}^*$\nobreakdash-algebras. For $A$ and $B$ in $\mathrm{Obj}(\mathcal{C}^*)$, we write the set of morphisms as $\mathrm{Mor}(A,B)$ which consists of nondegenerate $*$-homomorphisms $\pi : A \rightarrow \mathcal{M}(B)$, i.e., $*$-homomorphisms $\pi : A \rightarrow \mathcal{M}(B)$ such that $\pi(A)B=B$. Thus for unital $A$ and $B$, $\mathrm{Mor}(A,B)$ consists of unital $*$-homomorphisms from $A$ to $B$. To avoid confusion, we shall exclusively write a morphism as $A \rightarrow B$ or explicitly say a nondegenerate $*$\nobreakdash-homomorphism $A \rightarrow \mathcal{M}(B)$; of course there is no difference in the unital situation. We recall that a compact quantum group $G$ consists of a pair $G=(\textup{C}(G),\Delta_{G})$ where $\textup{C}(G)$ is a unital $\textup{C}^*$\nobreakdash-algebra and $\Delta_{G} : \textup{C}(G) \rightarrow \textup{C}(G) \otimes \textup{C}(G)$ is a coassociative morphism satisfying a cancellation property (\cite{Worosymm}). 

We fix a positive integer $N \in \mathbb{Z}$ and let $\omega$ denote an $N$\nobreakdash-th primitive root of unity. We write $\mathbb{Z}_N$ for the group $\mathbb{Z}/N\mathbb{Z}$. Let $z \in \textup{C}(\mathbb{Z}_N)$ denote the function that sends $t \in \mathbb{Z}_N$ to $\omega^t$, i.e., $z(t)=\omega^t$; $\textup{C}(\mathbb{Z}_N)$ is generated as a $\textup{C}^*$\nobreakdash-algebra by this unitary $z$. We also recall that the comultiplication $\Delta_{\mathbb{Z}_N} : \textup{C}(\mathbb{Z}_N) \rightarrow \textup{C}(\mathbb{Z}_N) \otimes \textup{C}(\mathbb{Z}_N)$ of the group $\mathbb{Z}_N$ sends $z$ to $z \otimes z$.

We let $\chi_s$ denote the character on $\mathbb{Z}_N$ defined by $\chi_s(t)=\omega^{st}$ for $s,t \in \mathbb{Z}_N$; thus $\widehat{\mathbb{Z}_N}=\{\chi_s \mid s \in \mathbb{Z}_N\}$. We define a bicharacter $\textup{R} \in \mathcal{U}(\textup{C}(\widehat{\mathbb{Z}_N}) \otimes \textup{C}(\widehat{\mathbb{Z}_N}))$ as follows:
\[\textup{R} : \widehat{\mathbb{Z}_N} \times \widehat{\mathbb{Z}_N} \rightarrow \mathbb{T}, \quad \textup{R}(\chi_m,\chi_n)=\omega^{mn}.\] This is an $\textup{R}$\nobreakdash-matrix in the sense of \cite{mrw2016} which we fix for the rest of the article. We also identify $\widehat{\mathbb{Z}_N}$ with $\mathbb{Z}_N$, via the bicharacter $\textup{R}$ above. This identification together with $\textup{C}^*(\mathbb{Z}_N)\cong \textup{C}(\widehat{\mathbb{Z}_N})$ yield the following:
\begin{equation}\label{eq:relation}
    \delta_{t}=\frac{1}{N} \sum_{s \in \mathbb{Z}_N}\omega^{-st}z^s \text{ and } z^s = \sum_{t \in \mathbb{Z}_N} \omega^{st}\delta_{t},
\end{equation} where $\delta_{t}$, $t \in \mathbb{Z}_N$ is the function on $\mathbb{Z}_N$ given by 
\[\delta_{t}(t')= 
\begin{cases}
    1 & t=t'; \\
    0 & \text{otherwise}.
\end{cases}\] We shall write $\Omega$ for the matrix $(\frac{1}{N}\omega^{-ij})_{i,j \in \mathbb{Z}_N}$, i.e., $\Omega_{ij}=\frac{1}{N}\omega^{-ij}$ for $i,j \in \mathbb{Z}_N$, the determinant of which is \[\det(\Omega)=\biggl(\frac{1}{N}\biggr)^{N} \prod_{t \in \mathbb{Z}_N}\frac{1}{w^{t}-w^{t- 1}} \neq 0,\] implying that $\Omega$ is invertible; furthermore the inverse is given by $\Omega^{-1}=(\omega^{ij})_{i,j \in \mathbb{Z}_N}$. 
% {\color{red} what is the t=0 term in the above determinant?}

\begin{definition}\label{def:znalg}
We define the category $\mathcal{C}^*_{\mathbb{Z}_N}$ consisting of $\mathbb{Z}_N$\nobreakdash-$\textup{C}^*$\nobreakdash-algebras and $\mathbb{Z}_N$\nobreakdash-equivariant morphisms as follows. An object of $\mathcal{C}^*_{\mathbb{Z}_N}$ is a pair $(X,\rho^X)$, where $X$ is a unital $\textup{C}^*$\nobreakdash-algebra and $\rho^X \in \mathrm{Mor}(X, X \otimes \textup{C}(\mathbb{Z}_N))$ such that
\begin{enumerate}
\item $(\rho^X \otimes \mathrm{id}_{\textup{C}(\mathbb{Z}_N)})\circ \rho^X=(\mathrm{id_X} \otimes \Delta_{\mathbb{Z}_N})\circ \rho^X$;
\item $\rho^X(X)(1_X \otimes \textup{C}(\mathbb{Z}_N))=X \otimes \textup{C}(\mathbb{Z}_N)$.
\end{enumerate}
Let $(X,\rho^X)$ and $(Y,\rho^Y)$ be two $\mathbb{Z}_N$\nobreakdash-$\textup{C}^*$\nobreakdash-algebras. A morphism $\phi : (X,\rho^X) \rightarrow (Y,\rho^Y)$ in $\mathcal{C}^*_{\mathbb{Z}_N}$ (or equivalently, a $\mathbb{Z}_N$\nobreakdash-equivariant morphism) is, by definition, $\phi \in \mathrm{Mor}(X,Y)$ such that $\rho^Y \circ \phi=(\phi \otimes \mathrm{id}_{\textup{C}(\mathbb{Z}_N)}) \circ \rho^X$. We write $\mathrm{Mor}^{\mathbb{Z}_N}(X,Y)$ for the set of morphisms between $(X,\rho^X)$ and $(Y,\rho^Y)$ in $\mathcal{C}^*_{\mathbb{Z}_N}$.
\end{definition}

\begin{remark}\label{rem:grading}
A $\mathbb{Z}_N$\nobreakdash-$\textup{C}^*$\nobreakdash-algebra $X$ (with $\rho^X$ understood) comes with an associated $\mathbb{Z}_N$-grading defined as follows. We call an element $x \in X$ homogeneous of degree $t \in \mathbb{Z}_N$ if $\rho^X(x)=x \otimes z^t$ and write $\mathrm{deg}(x)=t$. For each $t \in \mathbb{Z}_N$, we let $X(t)$ denote the set consisting of homogeneous elements of degree $t$: $X(t)=\{x \in X \mid \mathrm{deg}(x)=t\}$. The collection $\{X(t)\}_{t \in \mathbb{Z}_N}$ enjoys the following: 
\begin{enumerate}
\item for each $t \in \mathbb{Z}_N$, $X(t)$ is a closed subspace of $X$;
\item for $s, t \in \mathbb{Z}_N$, $X(s)X(t) \subseteq X([s+t])$;
\item for each $t \in \mathbb{Z}_N$, $X(t)^*=X(-t)$;
\item as a Banach space, $X$ coincides with the algebraic direct sum $\bigoplus_{t \in \mathbb{Z}_N}X(t)$.
\end{enumerate}
\end{remark}

Roughly speaking, an anyonic compact quantum group over $\mathbb{Z}_N$ or an anyonic compact quantum group as we shall call it, is a ``compact quantum group'' object in $\mathcal{C}^*_{\mathbb{Z}_N}$, which is endowed with a monoidal structure using a braided tensor product $\boxtimes_{\textup{R}}$ depending on the bicharacter $\textup{R}$ defined above. For convenience, we introduce the following notation. Given a $\mathbb{Z}_N$\nobreakdash-$\textup{C}^*$\nobreakdash-algebra $(A, \rho^A)$, $a \in A$ and $t \in \mathbb{Z}_N$, we write the value of the map $\rho^A(a) \in A \otimes \textup{C}(\mathbb{Z}_N) \cong \textup{C}(\mathbb{Z}_N,A)$ at $t$ as $\rho^A_{t}(a)$, i.e., $\rho^A(a)(t)=\rho^A_{t}(a) \in A$.

Let $(X,\rho^X)$ and $(Y,\rho^Y)$ be two objects of $\mathcal{C}^*_{\mathbb{Z}_N}$. Let $\mathcal{L}^X$ and $\mathcal{L}^Y$ be a pair of separable Hilbert spaces with continuous representations $\pi^X$ and $\pi^Y$, respectively, of $\mathbb{Z}_N$. Furthermore, let $X \hookrightarrow \mathbb{B}(\mathcal{L}^X)$ and $Y \hookrightarrow \mathbb{B}(\mathcal{L}^Y)$ be faithful, $\mathbb{Z}_N$-equivariant representations of $X$ and $Y$ on $\mathcal{L}^X$ and $\mathcal{L}^Y$, respectively. There are orthonormal bases $(\lambda^X_m)_{m \in \mathbb{N}}$ and $(\lambda^Y_m)_{m \in \mathbb{N}}$ for $\mathcal{L}^X$ and $\mathcal{L}^Y$ consisting of eigenvectors for the $\mathbb{Z}_N$-actions $\pi^X$ and $\pi^Y$, respectively, i.e., \[\pi^X(t)(\lambda^X_m)=\omega^{tl^X_m}\lambda^X_m,  \text{ for some } l^X_m \in \mathbb{N},\] and \[\pi^Y(t)(\lambda^Y_m)=\omega^{tl^Y_m}\lambda^Y_m,  \text{ for some } l^Y_m \in \mathbb{N}.\] Associated to the $\textup{R}$-matrix, we have the braiding unitaries $c_{\mathcal{L}^X,\mathcal{L}^Y} : \mathcal{L}^X \otimes \mathcal{L}^Y \rightarrow \mathcal{L}^Y \otimes \mathcal{L}^X$ and its dual $c_{\mathcal{L}^Y,\mathcal{L}^X} : \mathcal{L}^Y \otimes \mathcal{L}^X \rightarrow \mathcal{L}^X \otimes \mathcal{L}^Y$ defined by \[c_{\mathcal{L}^X,\mathcal{L}^Y}(\lambda^X_a \otimes \lambda^Y_b)=\omega^{l^X_al^Y_b}\lambda^Y_b \otimes \lambda^X_a, \quad c_{\mathcal{L}^Y,\mathcal{L}^X}(\lambda^Y_b \otimes \lambda^X_a)=\omega^{-l^X_al^Y_b}\lambda^X_a \otimes \lambda^Y_b.\] 

\begin{definition}\label{def:braided}
The braided tensor product $X \boxtimes_{\textup{R}} Y$ is defined to be the $\textup{C}^*$\nobreakdash-algebra generated by $j_1(x)j_2(y)$ in $\mathbb{B}(\mathcal{L}^X \otimes \mathcal{L}^Y)$, where $j_1(x)=x \otimes \mathrm{id}_{\mathcal{L}^Y}$ and $j_2(y)=c_{\mathcal{L}^Y,\mathcal{L}^X}(y \otimes \mathrm{id}_{\mathcal{L}^X})c_{\mathcal{L}^X,\mathcal{L}^Y}$, for $x \in X$ and $y \in Y$.      
\end{definition}

It can be shown that $X \boxtimes_{\textup{R}} Y$ is the closed linear span $j_1(X)j_2(Y)$, i.e., \[X \boxtimes_{\textup{R}} Y:=j_1(X)j_2(Y).\] By abuse of notation, we write $j_1 \in \mathrm{Mor}(X, X \boxtimes_{\textup{R}} Y)$ and $j_2 \in \mathrm{Mor}(Y, X \boxtimes_{\textup{R}} Y)$. We sometimes write $x \boxtimes_{\textup{R}}1_Y$ for $j_1(x)$ (and similarly, $1_X \boxtimes_{\textup{R}} y=j_2(y)$), so that for homogeneous $x$ and $y$,
\begin{align*}
(x \boxtimes_{\textup{R}} 1_Y)(1_X \boxtimes_{\textup{R}} y)={}&j_1(x)j_2(y)\\
={}&\omega^{\mathrm{deg}(x)\mathrm{deg}(y)}j_2(y)j_1(x)\\
={}&\omega^{\mathrm{deg}(x)\mathrm{deg}(y)}(1_X \boxtimes_{\textup{R}} y)(x \boxtimes_{\textup{R}} 1_Y).
\end{align*}

\begin{lemma}\label{lem:action}
There is a unique coaction $\rho^{X \boxtimes_{\textup{R}} Y}$ of $\textup{C}(\mathbb{Z}_N)$ on $X \boxtimes_{\textup{R}} Y$ such that $j_1 \in \mathrm{Mor}^{\mathbb{Z}_N}(X, X \boxtimes_{\textup{R}} Y)$ and $j_2 \in \mathrm{Mor}^{\mathbb{Z}_N}(Y, X \boxtimes_{\textup{R}} Y)$. 
\end{lemma}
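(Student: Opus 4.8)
The plan is to construct $\rho^{X\boxtimes_{\textup{R}}Y}$ on generators and then verify it extends to a well-defined morphism with the required properties. First I would define a map on the dense $*$-subalgebra spanned by products $j_1(x)j_2(y)$ with $x\in X$, $y\in Y$ homogeneous, by forcing $j_1$ and $j_2$ to be equivariant: set
\[
\rho^{X\boxtimes_{\textup{R}}Y}\bigl(j_1(x)j_2(y)\bigr):=\bigl(j_1\otimes\mathrm{id}\bigr)\bigl(\rho^X(x)\bigr)\cdot\bigl(j_2\otimes\mathrm{id}\bigr)\bigl(\rho^Y(y)\bigr)\in(X\boxtimes_{\textup{R}}Y)\otimes\textup{C}(\mathbb{Z}_N),
\]
which on homogeneous elements reads $j_1(x)j_2(y)\mapsto j_1(x)j_2(y)\otimes z^{\deg(x)+\deg(y)}$. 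The first task is well-definedness: I would show this map is compatible with the commutation relation $j_1(x)j_2(y)=\omega^{\deg(x)\deg(y)}j_2(y)j_1(x)$ from the displayed computation preceding the lemma, and more generally with all relations among the $j_i$'s. The cleanest way is to realize $\rho^{X\boxtimes_{\textup{R}}Y}$ as a composite of known equivariant maps on the concrete Hilbert space picture of Definition \ref{def:braided}, using that $X\hookrightarrow\mathbb{B}(\mathcal{L}^X)$ and $Y\hookrightarrow\mathbb{B}(\mathcal{L}^Y)$ are $\mathbb{Z}_N$-equivariant and that the braiding unitaries $c_{\mathcal{L}^X,\mathcal{L}^Y}$, $c_{\mathcal{L}^Y,\mathcal{L}^X}$ intertwine the diagonal $\mathbb{Z}_N$-actions.

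Concretely, I would let $\mathbb{Z}_N$ act diagonally on $\mathcal{L}^X\otimes\mathcal{L}^Y$ via $\pi^X\otimes\pi^Y$, note that this action is implemented by unitaries $U_t=\pi^X(t)\otimes\pi^Y(t)$, and check that conjugation by $U_t$ sends $j_1(x)$ to $j_1(\rho^X_t(x)\text{-type twist})$ — more precisely, that $U_t j_1(x) U_t^*$ and $U_t j_2(y) U_t^*$ lie in $X\boxtimes_{\textup{R}}Y$ and depend only on the grading data. Here the key point is that $j_2(y)=c_{\mathcal{L}^Y,\mathcal{L}^X}(y\otimes\mathrm{id}_{\mathcal{L}^X})c_{\mathcal{L}^X,\mathcal{L}^Y}$ is built from the equivariant braidings, so $U_t j_2(y)U_t^*=c_{\mathcal{L}^Y,\mathcal{L}^X}\bigl(\pi^Y(t)y\pi^Y(t)^*\otimes\mathrm{id}\bigr)c_{\mathcal{L}^X,\mathcal{L}^Y}$, which is again of the form $j_2$ applied to the conjugated element. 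This gives a genuine $\mathbb{Z}_N$-action by $*$-automorphisms of $X\boxtimes_{\textup{R}}Y$, hence (by continuity and the standard correspondence between $\mathbb{Z}_N$-actions and coactions of $\textup{C}(\mathbb{Z}_N)$, using the spectral projections $\delta_t$) a coaction $\rho^{X\boxtimes_{\textup{R}}Y}\in\mathrm{Mor}(X\boxtimes_{\textup{R}}Y,(X\boxtimes_{\textup{R}}Y)\otimes\textup{C}(\mathbb{Z}_N))$ satisfying the coassociativity and Podleś conditions of Definition \ref{def:znalg}; the latter follows since $\rho^{X\boxtimes_{\textup{R}}Y}$ restricted to $j_1(X)$ and to $j_2(Y)$ already satisfies Podleś density and these generate.

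By construction $j_1$ and $j_2$ are then $\mathbb{Z}_N$-equivariant: $\rho^{X\boxtimes_{\textup{R}}Y}\circ j_1=(j_1\otimes\mathrm{id}_{\textup{C}(\mathbb{Z}_N)})\circ\rho^X$ and similarly for $j_2$. For uniqueness, I would argue that any coaction making both $j_1$ and $j_2$ equivariant must agree with $\rho^{X\boxtimes_{\textup{R}}Y}$ on the generating set $j_1(X)j_2(Y)$ — because a morphism out of $X\boxtimes_{\textup{R}}Y$ is determined by its values on $j_1(X)$ and $j_2(Y)$ together with multiplicativity — and hence on all of $X\boxtimes_{\textup{R}}Y$ by density. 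The main obstacle I anticipate is the well-definedness/independence-of-presentation step: one must be careful that the formula respects not just the single quadratic commutation relation but all $\textup{C}^*$-algebraic relations in $X\boxtimes_{\textup{R}}Y$, and I expect the concrete Hilbert-space implementation via the unitaries $U_t$ is precisely what makes this automatic, rather than attempting a direct generators-and-relations verification.
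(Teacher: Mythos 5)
Your argument is correct. Note first that the paper itself offers no proof of this lemma: it is recalled from the general theory of quantum-group-twisted tensor products in \cite{mrw2016} (see also \cite{mrw2014}), where the analogous statement is proved for an arbitrary quasitriangular quantum group using bicharacters and Heisenberg pairs. Your proof is the natural specialization of that construction to the finite cyclic group, and the spatial route you take is exactly the right way to sidestep the well-definedness worry from your first paragraph: since $j_1(x)=x\otimes\mathrm{id}_{\mathcal{L}^Y}$ and $j_2(y)=c_{\mathcal{L}^Y,\mathcal{L}^X}(y\otimes\mathrm{id}_{\mathcal{L}^X})c_{\mathcal{L}^X,\mathcal{L}^Y}$ are concrete operators on $\mathcal{L}^X\otimes\mathcal{L}^Y$, conjugation by $U_t=\pi^X(t)\otimes\pi^Y(t)$ is automatically a well-defined $*$-automorphism of $\mathbb{B}(\mathcal{L}^X\otimes\mathcal{L}^Y)$, and the only thing to check is that it preserves $j_1(X)j_2(Y)$. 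The key computation you identify — that $c_{\mathcal{L}^X,\mathcal{L}^Y}$ intertwines $\pi^X(t)\otimes\pi^Y(t)$ with $\pi^Y(t)\otimes\pi^X(t)$, which is immediate from the eigenbasis formula for the braiding — gives $U_tj_1(x)U_t^*=j_1(\alpha^X_t(x))$ and $U_tj_2(y)U_t^*=j_2(\alpha^Y_t(y))$, so the action restricts; the passage from a $\mathbb{Z}_N$-action by automorphisms to a coaction via the spectral projections $\delta_t$ is standard and the Podle\'s condition is automatic for a genuine finite-group action. Your uniqueness argument (equivariance of $j_1$ and $j_2$ plus multiplicativity determines the coaction on the dense span $j_1(X)j_2(Y)$) is also the right one. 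What the general bicharacter approach of \cite{mrw2016} buys is independence of the chosen faithful equivariant representations and applicability beyond $\mathbb{Z}_N$; what your argument buys is a short, elementary, and self-contained verification in the case at hand.
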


Throughout the paper, $X \boxtimes_{\textup{R}} Y$ is equipped with this $\textup{C}(\mathbb{Z}_N)$\nobreakdash-coaction and thus becomes an object of $\mathcal{C}^*_{\mathbb{Z}_N}$. 

\begin{lemma}\label{lem:morphism}
Suppose $\pi_1 \in \mathrm{Mor}^{\mathbb{Z}_N}(X_1, Y_1)$ and $\pi_2 \in \mathrm{Mor}^{\mathbb{Z}_N}(X_2, Y_2)$ are two $\mathbb{Z}_N$\nobreakdash-equivariant morphisms. Then there exists a unique $\mathbb{Z}_N$\nobreakdash-equivariant morphism $\pi_1 \boxtimes_{\textup{R}} \pi_2 \in \mathrm{Mor}^{\mathbb{Z}_N}(X_1 \boxtimes_{\textup{R}} X_2, Y_1 \boxtimes_{\textup{R}} Y_2)$ such that \[(\pi_1 \boxtimes_{\textup{R}} \pi_2)(j_1(x_1)j_2(x_2))=j_1(\pi_1(x_1))j_2(\pi_2(x_2)),\] for $x_1 \in X_1$ and $x_2 \in X_2$.
\end{lemma}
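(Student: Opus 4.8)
The plan is to define $\pi_1 \boxtimes_{\textup{R}} \pi_2$ at the level of the faithful representations used to build the braided tensor products, and then verify it descends to a $\textup{C}^*$-algebra morphism with the stated property. Concretely, fix faithful $\mathbb{Z}_N$-equivariant representations $X_i \hookrightarrow \mathbb{B}(\mathcal{L}^{X_i})$ and $Y_i \hookrightarrow \mathbb{B}(\mathcal{L}^{Y_i})$ with eigenbases as in the setup preceding Definition \ref{def:braided}. First I would observe that, since each $\pi_i$ is nondegenerate, it extends to a unital $*$-homomorphism $\mathcal{M}(X_i) \to \mathcal{M}(Y_i)$, and by equivariance it intertwines the gradings, i.e.\ $\pi_i(X_i(t)) \subseteq \mathcal{M}(Y_i)(t)$ for every $t \in \mathbb{Z}_N$ (this uses Remark \ref{rem:grading} and the defining relation $\rho^{Y_i} \circ \pi_i = (\pi_i \otimes \mathrm{id}) \circ \rho^{X_i}$). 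The degree-preservation is the crucial structural fact: it is exactly what makes the $\omega$-commutation relations compatible under the map.

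Next I would establish existence. The candidate map $\pi_1 \boxtimes_{\textup{R}} \pi_2$ should send $j_1(x_1)j_2(x_2) \mapsto j_1(\pi_1(x_1))j_2(\pi_2(x_2))$; to see this is well-defined one needs to know that $X_1 \boxtimes_{\textup{R}} X_2$, as the closed linear span $j_1(X_1)j_2(X_2)$, has the appropriate universal property among $\textup{C}^*$-algebras receiving a commuting-up-to-phase pair of morphisms from $X_1$ and $X_2$. The cleanest route is to invoke the universal property of the braided tensor product from \cites{mrw2014,mrw2016}: for any $\mathbb{Z}_N$-$\textup{C}^*$-algebra $Z$ with $\mathbb{Z}_N$-equivariant morphisms $\ell_1 : X_1 \to Z$, $\ell_2 : X_2 \to Z$ satisfying the braided commutation relation $\ell_1(x_1)\ell_2(x_2) = \omega^{\deg(x_1)\deg(x_2)}\ell_2(x_2)\ell_1(x_1)$ on homogeneous elements, there is a unique morphism $X_1 \boxtimes_{\textup{R}} X_2 \to Z$ intertwining $j_1, j_2$ with $\ell_1, \ell_2$. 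Applying this with $Z = Y_1 \boxtimes_{\textup{R}} Y_2$, $\ell_1 = j_1 \circ \pi_1$, $\ell_2 = j_2 \circ \pi_2$ gives the desired morphism, once one checks that $\ell_1, \ell_2$ satisfy the braided commutation relation — but that is immediate from the $\omega$-commutation in $Y_1 \boxtimes_{\textup{R}} Y_2$ displayed after Definition \ref{def:braided}, together with degree-preservation of $\pi_1, \pi_2$.

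If one prefers to avoid citing the universal property directly, the hands-on alternative is to build the map on the nose using the representations: put $\widetilde{\pi}_i = \pi_i$ viewed inside $\mathbb{B}(\mathcal{L}^{Y_i})$ after choosing a compatible representation of $Y_i$, and note that the braiding unitaries $c_{\mathcal{L}^{X_1},\mathcal{L}^{X_2}}$ and $c_{\mathcal{L}^{Y_1},\mathcal{L}^{Y_2}}$ are defined purely in terms of the eigenvalue data $l^{X_i}_m$, $l^{Y_i}_m$ of the $\mathbb{Z}_N$-actions; the conjugation defining $j_2$ transforms correctly because $\pi_2$ preserves the eigenspace decomposition. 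One then checks $(\pi_1 \boxtimes_{\textup{R}} \pi_2)$ is a $*$-homomorphism on the dense $*$-subalgebra spanned by products $j_1(x_1)j_2(x_2)$, and that it is nondegenerate — nondegeneracy follows since $\pi_i(X_i)Y_i = Y_i$ implies $j_1(\pi_1(X_1))j_2(\pi_2(X_2))(Y_1 \boxtimes_{\textup{R}} Y_2)$ is dense. Uniqueness is automatic: two morphisms agreeing on the generating set $\{j_1(x_1)j_2(x_2)\}$ agree on the $\textup{C}^*$-algebra they generate. Finally, $\mathbb{Z}_N$-equivariance of $\pi_1 \boxtimes_{\textup{R}} \pi_2$ follows from Lemma \ref{lem:action}: the coaction $\rho^{X_1 \boxtimes_{\textup{R}} X_2}$ is characterized by making $j_1, j_2$ equivariant, so $\rho^{Y_1\boxtimes_{\textup{R}}Y_2} \circ (\pi_1\boxtimes_{\textup{R}}\pi_2)$ and $((\pi_1\boxtimes_{\textup{R}}\pi_2) \otimes \mathrm{id}) \circ \rho^{X_1\boxtimes_{\textup{R}}X_2}$ agree after composing with $j_1$ and $j_2$ (using equivariance of each $\pi_i$ and of the $j$'s), hence agree everywhere.

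The main obstacle I anticipate is the well-definedness step: verifying that the prescription on the generators $j_1(x_1)j_2(x_2)$ extends to a genuine $\textup{C}^*$-morphism rather than just a map of $*$-algebras. This is where one must control norms, and the conceptually correct way to handle it is to lean on the universal property of $\boxtimes_{\textup{R}}$ (equivalently, on the fact from \cites{mrw2014,mrw2016} that $\boxtimes_{\textup{R}}$ does not depend on the chosen faithful representations) so that the norm estimate is inherited from $Y_1 \boxtimes_{\textup{R}} Y_2$. Everything else — degree-preservation, the phase bookkeeping, nondegeneracy, equivariance, and uniqueness — is routine once that point is secured.
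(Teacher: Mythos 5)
The paper does not prove this lemma at all: it is stated as a recalled fact, with the proof deferred to \cite{mrw2014} and \cite{mrw2016}. So the comparison here is really between your argument and the argument in those references, which is the representation-theoretic one you relegate to your ``hands-on alternative.''

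Your primary route has a genuine gap: the universal property you invoke is false for $\boxtimes_{\textup{R}}$ as defined in this paper. Definition \ref{def:braided} is a \emph{spatial} construction --- $X_1\boxtimes_{\textup{R}}X_2$ is the closed span of $j_1(X_1)j_2(X_2)$ inside $\mathbb{B}(\mathcal{L}^{X_1}\otimes\mathcal{L}^{X_2})$ --- and such constructions do not receive maps from arbitrary targets carrying a commuting-up-to-phase pair. To see the failure concretely, equip $X_1$ and $X_2$ with the trivial $\mathbb{Z}_N$-grading (everything in degree $0$); then the braided commutation relation is plain commutation and $X_1\boxtimes_{\textup{R}}X_2=X_1\otimes_{\min}X_2$, but a pair of commuting representations only factors through the \emph{maximal} tensor product in general. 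So the claimed universal property would force $\otimes_{\min}=\otimes_{\max}$ for all pairs, which is false absent nuclearity. Your parenthetical asserting that the universal property is ``equivalently'' the independence of $\boxtimes_{\textup{R}}$ from the chosen faithful representations conflates two different statements: the independence is true and is exactly what one needs, the universal property is not.

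The correct argument (and the one in \cite{mrw2014}) runs through your alternative route, but the critical step you leave unspecified is the norm control. One represents $X_i$ on $\mathcal{L}^{X_i}\oplus\mathcal{L}^{Y_i}$ as the direct sum of a faithful equivariant representation and the (possibly non-faithful) equivariant representation obtained by composing $\pi_i$ with a faithful equivariant representation of $Y_i$; this direct sum is again faithful and equivariant, so by independence of the construction from the choice of faithful representation it computes $X_1\boxtimes_{\textup{R}}X_2$. The algebra then acts block-diagonally on $(\mathcal{L}^{X_1}\oplus\mathcal{L}^{Y_1})\otimes(\mathcal{L}^{X_2}\oplus\mathcal{L}^{Y_2})$, and compression to the $\mathcal{L}^{Y_1}\otimes\mathcal{L}^{Y_2}$ corner is a $*$-homomorphism sending $j_1(x_1)j_2(x_2)$ to $j_1(\pi_1(x_1))j_2(\pi_2(x_2))$; this is where the inequality $\|\sum_k j_1(\pi_1(x_1^{(k)}))j_2(\pi_2(x_2^{(k)}))\|\le\|\sum_k j_1(x_1^{(k)})j_2(x_2^{(k)})\|$ actually comes from. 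Your observations on degree preservation, nondegeneracy, uniqueness on the generating set, and equivariance via Lemma \ref{lem:action} are all correct and would complete the proof once this step is supplied.
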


Having gathered the required notions, we can proceed to define a braided compact quantum group over $\mathbb{Z}_N$.

\begin{definition}\label{def:bcqg}\cite{mrw2016}
A braided compact quantum group over $\mathbb{Z}_N$, or an anyonic compact quantum group, is a triple $G=(\textup{C}(G),\rho^{\textup{C}(G)}, \Delta_G)$, where $\textup{C}(G)$ is a unital $\textup{C}^*$\nobreakdash-algebra, $\rho^{\textup{C}(G)}$ is a $\textup{C}(\mathbb{Z}_N)$\nobreakdash-coaction on $\textup{C}(G)$ so that $(\textup{C}(G),\rho^{\textup{C}(G)})$ is an object of $\mathcal{C}^*_{\mathbb{Z}_N}$, $\Delta_{G}$ is a $\mathbb{Z}_N$\nobreakdash-equivariant morphism $\Delta_G \in \mathrm{Mor}^{\mathbb{Z}_N}(\textup{C}(G), \textup{C}(G) \boxtimes_{\textup{R}} \textup{C}(G))$ such that
\begin{enumerate}
\item $(\Delta_G \boxtimes_{\textup{R}} \mathrm{id}_{\textup{C}(G)}) \circ \Delta_G=(\mathrm{id}_{\textup{C}(G)} \boxtimes_{\textup{R}} \Delta_G)\circ \Delta_G$ (coassociativity);
\item $\Delta_G(\textup{C}(G))(1_{\textup{C}(G)} \boxtimes_{\textup{R}} \textup{C}(G))=\Delta_G(\textup{C}(G))(\textup{C}(G) \boxtimes_{\textup{R}} 1_{\textup{C}(G)})=\textup{C}(G) \boxtimes_{\textup{R}} \textup{C}(G)$ (bisimplifiability).
\end{enumerate}
\end{definition}

We now construct the examples of anyonic compact quantum groups we shall be concerned with in the rest of the article.

\begin{definition}\label{def:permutationalg}
We define $\textup{C}(\textup{S}_N^+(\textup{R}))$ to be the universal unital $\textup{C}^*$\nobreakdash-algebra with generators $q_{ij}$ for $i,j \in \mathbb{Z}_N$ subject to the following set of relations:
\begin{enumerate}
    \item for each $i \in \mathbb{Z}_N$, $q_{0i}=q_{i0}=\delta_{i0}$;
    \item for each $i,j \in \mathbb{Z}_N$, $q_{ij}^*=\omega^{-i(i-j)} q_{-i,-j}$;
    \item for each $i,j,k \in \mathbb{Z}_N$, $q_{k, i+j}=\sum_{l \in \mathbb{Z}_N} \omega^{-l(i-k+l)} q_{k-l,i}q_{lj}$;
    \item for each $i,j,k \in \mathbb{Z}_N$, $q_{i+j,k}=\sum_{l \in \mathbb{Z}_N} \omega^{-i(l-j)} q_{jl}q_{i,k-l}$.
\end{enumerate}
\end{definition}

\begin{remark}
We use a comma to separate the two subscripts of $q$ when the group operations of $\mathbb{Z}_N$ are applied on one or both of them, e.g., $q_{k,i+j}$; otherwise, we use the standard juxtaposition, e.g., $q_{ij}$.  
\end{remark}

To construct $\textup{C}(\textup{S}_N^+(\textup{R}))$, we first record a result (Theorem \ref{thm:quotient}) that we will prove below: the matrix $q=(q_{ij})_{i,j \in \mathbb{Z}_N} \in M_N(\textup{C}(\textup{S}_N^+(\textup{R}))$ is a unitary matrix. Granting this result, we observe that $\|q_{ij}\| \leq 1$ and the relations are polynomials in $q_{ij}$, $q^*_{ij}$, thus ensuring the existence of $\textup{C}(\textup{S}_N^+(\textup{R}))$. Now let $\mathcal{A}(\textup{S}_N^+(\textup{R}))$ be the universal unital $*$-algebra with same generators and relations. Given a $\textup{C}^*$\nobreakdash-seminorm $\|\cdot\|$ on $\mathcal{A}(\textup{S}_N^+(\textup{R}))$, we have $\|q_{ij}\| \leq 1$ for $i,j \in \mathbb{Z}_N$, hence there is a largest $\textup{C}^*$\nobreakdash-seminorm on $\mathcal{A}(\textup{S}_N^+(\textup{R}))$ and $\textup{C}(\textup{S}_N^+(\textup{R}))$ is the completion of $\mathcal{A}(\textup{S}_N^+(\textup{R}))$ in this largest $\textup{C}^*$\nobreakdash-seminorm. 

\begin{proposition}
There is a unique unital $*$-homomorphism \[\rho^{\textup{C}(\textup{S}_N^+(\textup{R}))} : \textup{C}(\textup{S}_N^+(\textup{R})) \rightarrow \textup{C}(\mathbb{Z}_N, \textup{C}(\textup{S}_N^+(\textup{R})))\] such that $\rho_{t}^{\textup{C}(\textup{S}_N^+(\textup{R}))}(q_{ij})=\omega^{t(j-i)}q_{ij}$ for each $i,j \in \mathbb{Z}_N$ and $t \in \mathbb{Z}_N$, satisfying the two conditions in Definition \textup{\ref{def:znalg}}, making $(\textup{C}(\textup{S}_N^+(\textup{R})),\rho^{\textup{C}(\textup{S}_N^+(\textup{R}))})$ a $\mathbb{Z}_N$\nobreakdash-$\textup{C}^*$\nobreakdash-algebra.
\end{proposition}

\begin{proof}
We begin by remarking that, defining $\rho_{t}^{\textup{C}(\textup{S}_N^+(\textup{R}))}(q_{ij})=\omega^{t(j-i)}q_{ij}$ yields a $\mathbb{Z}_N$-action on the free unital $*$\nobreakdash-algebra with generators $q_{ij}$. To conclude the proof, we need to show that the defining relations (see Definition \ref{def:permutationalg}) are homogeneous. The relation (1) is homogeneous because \[\rho_{t}^{\textup{C}(\textup{S}_N^+(\textup{R}))}(q_{i0})=\omega^{-ti}q_{i0}, \quad \rho_{t}^{\textup{C}(\textup{S}_N^+(\textup{R}))}(q_{0i})=\omega^{ti}q_{0i},\] for each $i$. For (2), we observe that 
\allowdisplaybreaks{
\begin{align*}
\rho_{t}^{\textup{C}(\textup{S}_N^+(\textup{R}))}(q^*_{ij})=\omega^{t(-j+i)}q^*_{ij}={}&\omega^{t(-j+i)}\omega^{-i(i-j)}q_{-i,-j}\\
={}&\rho_{t}^{\textup{C}(\textup{S}_N^+(\textup{R}))}(\omega^{-i(i-j)}q_{-i,-j}),    
\end{align*}
}for all $i,j$. The relation (3) is homogeneous because
\allowdisplaybreaks{
\begin{align*}
\rho_{t}^{\textup{C}(\textup{S}_N^+(\textup{R}))}(q_{k,i+j})={}&\omega^{t(i+j-k)}q_{k,i+j}\\
={}&\omega^{t(i+j-k)}\sum_{l \in \mathbb{Z}_N} \omega^{-l(i-k+l)} q_{k-l,i}q_{lj}\\
={}&\sum_{l \in \mathbb{Z}_N} \omega^{-l(i-k+l)} \omega^{t(i-k+l)}\omega^{t(j-l)}q_{k-l,i}q_{lj}\\
={}&\rho_{t}^{\textup{C}(\textup{S}_N^+(\textup{R}))}(\sum_{l \in \mathbb{Z}_N} \omega^{-l(i-k+l)} q_{k-l,i}q_{lj}),
\end{align*}
}for all $i,j,k$. That the relation (4) too is homogeneous can be shown in exactly similar manner and so we skip the argument.
\end{proof}

\begin{proposition}\label{prop:comult}
There is a unique unital $*$-homomorphism \[\Delta_{\textup{S}_N^+(\textup{R})} : \textup{C}(\textup{S}_N^+(\textup{R})) \rightarrow \textup{C}(\textup{S}_N^+(\textup{R})) \boxtimes_{\textup{R}} \textup{C}(\textup{S}_N^+(\textup{R}))\] such that $\Delta_{\textup{S}_N^+(\textup{R})}(q_{ij})=\sum_{k \in \mathbb{Z}_N}j_1(q_{ik})j_2(q_{kj})$ for $i,j \in \mathbb{Z}_N$. Furthermore, $\Delta_{\textup{S}_N^+(\textup{R})}$ is $\mathbb{Z}_N$\nobreakdash-equivariant, coassociative and bisimplifiable \textup{(}see Definition \textup{\ref{def:bcqg})}.
\end{proposition}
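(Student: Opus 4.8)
The plan is to invoke the universal property of $A:=\textup{C}(\textup{S}_N^+(\textup{R}))$. Writing $\Delta$ for $\Delta_{\textup{S}_N^+(\textup{R})}$, I would set
\[Q_{ij}:=\sum_{k\in\mathbb{Z}_N}j_1(q_{ik})\,j_2(q_{kj})\ \in\ A\boxtimes_{\textup{R}}A,\qquad i,j\in\mathbb{Z}_N,\]
and verify that the family $(Q_{ij})_{i,j\in\mathbb{Z}_N}$ satisfies the four relations of Definition \ref{def:permutationalg} inside the $\textup{C}^*$\nobreakdash-algebra $A\boxtimes_{\textup{R}}A$; the universal property then produces the desired unital $*$\nobreakdash-homomorphism $\Delta$ with $\Delta(q_{ij})=Q_{ij}$, unique because the $q_{ij}$ generate $A$. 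Throughout the verification, the two facts I would lean on are $\mathrm{deg}(q_{ab})=b-a$ (from the $\mathbb{Z}_N$\nobreakdash-action on $A$) and the braided commutation rule, which for homogeneous $x,y$ reads $j_2(y)j_1(x)=\omega^{-\mathrm{deg}(x)\mathrm{deg}(y)}j_1(x)j_2(y)$.

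Relation~(1) is immediate, since $Q_{0i}=\sum_k j_1(\delta_{0k}1_A)\,j_2(q_{ki})=j_2(q_{0i})=\delta_{i0}1$, and $Q_{i0}=\delta_{i0}1$ likewise. For relation~(2) I would expand $Q_{ij}^{*}=\sum_k j_2(q_{kj}^{*})\,j_1(q_{ik}^{*})$, substitute relation~(2) of Definition \ref{def:permutationalg} in each leg, commute $j_1$ to the left past $j_2$ (acquiring the phase $\omega^{-(i-k)(k-j)}$), and reindex $k\mapsto-k$; the three resulting exponents satisfy $-k(k-j)-i(i-k)-(i-k)(k-j)=-i(i-j)$, which is exactly the exponent on the right-hand side of relation~(2), so the identity holds term by term. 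Relations~(3) and~(4) would be handled in the same spirit but with considerably more bookkeeping: expand the products $Q_{k-l,i}Q_{lj}$ (respectively $Q_{jl}Q_{i,k-l}$) into $j_1$--$j_2$ words, use the braided commutation rule to push every $j_1$\nobreakdash-factor to the left, apply relations~(3) and~(4) of Definition \ref{def:permutationalg} inside the first and second legs to recombine $q_{k-l,c}\,q_{l,m-c}$ into $q_{km}$ and $q_{m-d,i}\,q_{dj}$ into $q_{m,i+j}$, and check that the accumulated powers of $\omega$ collapse to reproduce $Q_{k,i+j}$. This phase-tracking for relations~(3) and~(4) is the step I expect to be the main obstacle: it requires no new idea, but it is the most error-prone part, since every use of the braided commutation rule and of relations~(3), (4) contributes a phase that must be carried along correctly.

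Granting that $\Delta$ exists, $\mathbb{Z}_N$\nobreakdash-equivariance is a one-line check on generators: by Lemma \ref{lem:action} the maps $j_1,j_2$ are $\mathbb{Z}_N$\nobreakdash-equivariant, so $j_1(q_{ik})$ is homogeneous of degree $k-i$ and $j_2(q_{kj})$ of degree $j-k$, whence $\Delta(q_{ij})$ is homogeneous of degree $j-i=\mathrm{deg}(q_{ij})$; thus $\rho^{A\boxtimes_{\textup{R}}A}\circ\Delta$ and $(\Delta\otimes\mathrm{id}_{\textup{C}(\mathbb{Z}_N)})\circ\rho^{A}$ agree on the $q_{ij}$ and hence everywhere. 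For coassociativity I would evaluate both $(\Delta\boxtimes_{\textup{R}}\mathrm{id}_A)\circ\Delta$ and $(\mathrm{id}_A\boxtimes_{\textup{R}}\Delta)\circ\Delta$ on $q_{ij}$; using Lemma \ref{lem:morphism} and the associativity of $\boxtimes_{\textup{R}}$, both become $\sum_{k,l}q_{ik}\boxtimes_{\textup{R}}q_{kl}\boxtimes_{\textup{R}}q_{lj}$ in the triple braided tensor product $A\boxtimes_{\textup{R}}A\boxtimes_{\textup{R}}A$, so they coincide on generators and therefore as $*$\nobreakdash-homomorphisms.

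Finally, bisimplifiability rests on the fact, recorded in Theorem \ref{thm:quotient}, that $q=(q_{ij})$ is a unitary matrix. From $qq^{*}=I_N$ one computes
\[\sum_{j\in\mathbb{Z}_N}\Delta(q_{ij})\,j_2(q_{kj})^{*}=\sum_{l\in\mathbb{Z}_N}j_1(q_{il})\,j_2\!\Bigl(\,\sum_{j\in\mathbb{Z}_N}q_{lj}q_{kj}^{*}\Bigr)=j_1(q_{ik}),\]
so each $q_{ik}\boxtimes_{\textup{R}}1_A$ lies in $\Delta(A)(1_A\boxtimes_{\textup{R}}A)$; symmetrically, $q^{*}q=I_N$ gives $\sum_{i\in\mathbb{Z}_N}j_1(q_{il})^{*}\,\Delta(q_{ij})=j_2(q_{lj})$. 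A routine induction on word length --- rewriting arbitrary $j_1$--$j_2$ words as elements of $\Delta(A)$ times a single leg, with relation~(2) of Definition \ref{def:permutationalg} and the braided commutation rule doing the bookkeeping, exactly as over $\mathbb{T}$ in \cite{BJR2022} --- then upgrades these to $\Delta(A)(1_A\boxtimes_{\textup{R}}A)=\Delta(A)(A\boxtimes_{\textup{R}}1_A)=A\boxtimes_{\textup{R}}A$, which is bisimplifiability.
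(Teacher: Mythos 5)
Your proposal is correct and takes essentially the same route as the paper's proof: define $Q_{ij}=\sum_{k}j_1(q_{ik})j_2(q_{kj})$, verify the four relations of Definition \ref{def:permutationalg} in $\textup{C}(\textup{S}_N^+(\textup{R}))\boxtimes_{\textup{R}}\textup{C}(\textup{S}_N^+(\textup{R}))$ (your exponent identity $-k(k-j)-i(i-k)-(i-k)(k-j)=-i(i-j)$ for relation (2) agrees with the paper's computation), invoke universality, read off $\mathbb{Z}_N$\nobreakdash-equivariance from the homogeneity degree $j-i$, check coassociativity on generators, and reduce bisimplifiability to the unitarity of $q$ from Theorem \ref{thm:quotient} followed by the standard density argument. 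The one step you leave unexecuted is the phase bookkeeping for relations (3) and (4), which is the bulk of the paper's proof; the paper carries it out explicitly (commuting $j_2(q_{\alpha,i})$ past $j_1(q_{l\beta})$ at the cost of $\omega^{-(i-\alpha)(\beta-l)}$, applying relation (3) inside the first leg, and reindexing $\beta\mapsto l$, $\alpha\mapsto\alpha-l$), and the phases collapse exactly as you predict, so there is no obstruction there.
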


\begin{proof}
Let $Q_{ij}=\sum_{k \in \mathbb{Z}_N}j_1(q_{ik})j_2(q_{kj})$ for $i,j \in \mathbb{Z}_N$ and $Q=(Q_{ij})$. We remark that $Q_{ij}$ is homogeneous of degree $j-i$. By the universal property, a necessarily unique unital $*$\nobreakdash-homomorphism $\Delta_{\textup{S}_N^+(\textup{R})} : \textup{C}(\textup{S}_N^+(\textup{R})) \rightarrow \textup{C}(\textup{S}_N^+(\textup{R})) \boxtimes_{\textup{R}} \textup{C}(\textup{S}_N^+(\textup{R}))$ satisfying $\Delta_{\textup{S}_N^+(\textup{R})}(q_{ij})=Q_{ij}$ exists if and only if $Q_{ij}$ satisfies the relations (1)-(4) in Definition \ref{def:permutationalg}. For relation (1), we observe that 
\allowdisplaybreaks{
 \begin{align*}
Q_{i0}=\sum_{k \in \mathbb{Z}_N}j_1(q_{ik})j_2(q_{k0})={}&\sum_{k \in \mathbb{Z}_N}j_1(q_{ik})j_2(\delta_{k0})=\delta_{i0},
\end{align*}   
}and similarly, $Q_{0i}=\delta_{i0}$. Now using relation (2) for $q_{ij}$, we obtain
\allowdisplaybreaks{
\begin{align*}
Q^*_{ij}=\sum_{k \in \mathbb{Z}_N}j_2(q^*_{kj})j_1(q^*_{ik})={}&\sum_{k \in \mathbb{Z}_N}j_2(\omega^{-k(k-j)}q_{-k,-j})j_1(\omega^{-i(i-k)}q_{-i,-k})\\
={}&\sum_{k \in \mathbb{Z}_N}\omega^{-(-j+k)(-k+i)-i(i-k)-k(k-j)}j_1(q_{-i,-k})j_2(q_{-k,-j})\\    
={}&\sum_{k \in \mathbb{Z}_N}\omega^{-i(i-j)}j_1(q_{-i,-k})j_2(q_{-k,-j})\\    
={}&\omega^{-i(i-j)}Q_{-i,-j},
\end{align*}    
}where the third equality uses the commutation relation of $j_1$ and $j_2$. Next, on one hand,
\allowdisplaybreaks{
\begin{align*}
Q_{k,i+j}={}&\sum_{\alpha \in \mathbb{Z}_N}j_1(q_{k\alpha})j_2(q_{\alpha, i+j})\\
={}&\sum_{\alpha,l \in \mathbb{Z}_N}\omega^{-l(i-\alpha+l)}j_1(q_{k\alpha})j_2(q_{\alpha-l,i}q_{lj}),   
\end{align*}   
}and on the other,
\allowdisplaybreaks{
\begin{align*}
{}&\sum_{l \in \mathbb{Z}_N}\omega^{-l(i-k+l)}Q_{k-l,i}Q_{lj}\\
={}&\sum_{l,\alpha,\beta \in \mathbb{Z}_N}\omega^{-l(i-k+l)}j_1(q_{k-l,\alpha})j_2(q_{\alpha,i})j_1(q_{l\beta}))j_2(q_{\beta j})\\
={}&\sum_{l,\alpha,\beta \in \mathbb{Z}_N}\omega^{-l(i-k+l)-(i-\alpha)(\beta-l)}j_1(q_{k-l,\alpha}q_{l\beta}))j_2(q_{\alpha,i}q_{\beta j})\\        
={}&\sum_{\alpha,\beta \in \mathbb{Z}_N}\omega^{-i\beta+\alpha\beta}j_1(\sum_{l \in \mathbb{Z}_N}\omega^{-l(\alpha-k+l)}q_{k-l,\alpha}q_{l\beta}))j_2(q_{\alpha,i}q_{\beta j})\\        
={}&\sum_{\alpha,\beta \in \mathbb{Z}_N}\omega^{-i\beta+\alpha\beta}j_1(q_{k,\alpha+\beta})j_2(q_{\alpha i}q_{\beta j})\\        
={}&\sum_{\alpha,l \in \mathbb{Z}_N}\omega^{-l(-i-\alpha+l)}j_1(q_{k,\alpha})j_2(q_{\alpha -l,i}q_{l j}),
\end{align*}    
}where the second equality is obtained by commuting $j_2$ and $j_1$; the fourth equality is obtained from using relation (3) for $q_{ij}$ and the fifth equality is obtained by replacing $\beta$ with $l$ and $\alpha$ with $\alpha-l$. Therefore, we have obtained that for all $i,j,k$, $Q_{k,i+j}=\sum_{l \in \mathbb{Z}_N}\omega^{-l(i-k+l)}Q_{k-l,i}Q_{lj}$ and by a similar argument, we also have $Q_{i+j,k}=\sum_{l \in \mathbb{Z}_N} \omega^{-i(l-j)} Q_{jl}Q_{i,k-l}$. Taking all these together, we have constructed a unique unital $*$\nobreakdash-homomorphism $\Delta_{\textup{S}_N^+(\textup{R})} : \textup{C}(\textup{S}_N^+(\textup{R})) \rightarrow \textup{C}(\textup{S}_N^+(\textup{R})) \boxtimes_{\textup{R}} \textup{C}(\textup{S}_N^+(\textup{R}))$ satisfying $\Delta_{\textup{S}_N^+(\textup{R})}(q_{ij})=Q_{ij}$ for $i,j \in \mathbb{Z}_N$.

As remarked above, for $i,j \in \mathbb{Z}_N$, $Q_{ij}$ is homogeneous of degree $j-i$ and so $\Delta_{\textup{S}_{N}^{+}(\textup{R})}$ is $\mathbb{Z}_N$\nobreakdash-equivariant. Also, since both $(\Delta_{\textup{S}_{N}^{+}(\textup{R})} \boxtimes_{\textup{R}} \mathrm{id}_{\textup{C}(\textup{S}_N^+(\textup{R}))}) \circ \Delta_{\textup{S}_{N}^{+}(\textup{R})}$ and $(\mathrm{id}_{\textup{C}(\textup{S}_N^+(\textup{R}))} \boxtimes_{\textup{R}} \Delta_{\textup{S}_{N}^{+}(\textup{R})})\circ \Delta_{\textup{S}_{N}^{+}(\textup{R})}$ send $q_{ij}$ to $\sum_{k,l \in \mathbb{Z}_N}j_1(q_{ik})j_2(q_{kl})j_3(q_{lj})$, we see that $\Delta_{\textup{S}_{N}^{+}(\textup{R})}$ is coassociative. Bisimplifiability will follow using the standard argument as in the ordinary case (see for example the proof of Proposition 2.18 in \cite{BJR2022}) once we show that the matrix $q=(q_{ij})$ is a unitary, which is the content of Theorem \ref{thm:quotient} below. This finishes the proof.
\end{proof}

\begin{definition}\label{def:permutation}
We define the anyonic quantum permutation group, denoted $\textup{S}_N^+(\textup{R})$, to be the anyonic compact quantum group $(\textup{C}(\textup{S}_N^+(\textup{R})),\rho^{\textup{C}(\textup{S}_N^+(\textup{R}))},\Delta_{\textup{S}_N^+(\textup{R})})$, constructed above. 
\end{definition}

Having constructed the anyonic quantum permutation group, we now proceed to construct another example.

\begin{definition}\label{def:unitaryalg}
We define $\textup{C}(\textup{U}_N^{+}(\textup{R}))$ to be the universal unital $\textup{C}^*$\nobreakdash-algebra with generators $u_{ij}$ for $i,j \in \mathbb{Z}_N$ subject to the relations that make $u$ and $\overline{u}_{\textup{R}}$ unitaries, where $u=(u_{ij})_{i,j \in \mathbb{Z}_N}$ and $\overline{u}_{\textup{R}}=(\omega^{-i(j-i)}u^*_{ij})_{i,j \in \mathbb{Z}_N}$.
\end{definition}
    
To construct $\textup{C}(\textup{U}_N^{+}(\textup{R}))$, we first observe that $\|u_{ij}\| \leq 1$ and the relations are polynomials in $u_{ij}$, $u^*_{ij}$, thus ensuring its existence. Now let $\mathcal{A}(\textup{U}_N^{+}(\textup{R}))$ be the universal unital $*$-algebra with same generators and relations. Given a $\textup{C}^*$\nobreakdash-seminorm $\|\cdot\|$ on $\mathcal{A}(\textup{U}_N^{+}(\textup{R}))$, we have $\|u_{ij}\| \leq 1$ for $i,j \in \mathbb{Z}_N$, hence there is a largest $\textup{C}^*$\nobreakdash-seminorm on $\mathcal{A}(\textup{U}_N^{+}(\textup{R}))$ and $\textup{C}(\textup{U}_N^{+}(\textup{R}))$ is the completion of $\mathcal{A}(\textup{U}_N^{+}(\textup{R}))$ in this largest $\textup{C}^*$\nobreakdash-seminorm. 
    
\begin{proposition}
There is a unique unital $*$-homomorphism \[\rho^{\textup{C}(\textup{U}_N^{+}(\textup{R}))} : \textup{C}(\textup{U}_N^{+}(\textup{R})) \rightarrow \textup{C}(\mathbb{Z}_N, \textup{C}(\textup{U}_N^{+}(\textup{R})))\] such that $\rho_{t}^{\textup{C}(\textup{U}_N^{+}(\textup{R}))}(u_{ij})=\omega^{t(j-i)}u_{ij}$ for each $i,j \in \mathbb{Z}_N$ and $t \in \mathbb{Z}_N$, satisfying the two conditions in Definition \textup{\ref{def:znalg}}, making $(\textup{C}(\textup{U}_N^{+}(\textup{R})),\rho^{\textup{C}(\textup{U}_N^{+}(\textup{R}))})$ a $\mathbb{Z}_N$\nobreakdash-$\textup{C}^*$\nobreakdash-algebra.
\end{proposition}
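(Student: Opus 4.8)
The plan is to follow the template of the proof just given for $\rho^{\textup{C}(\textup{S}_N^+(\textup{R}))}$, using the universal property of $\textup{C}(\textup{U}_N^+(\textup{R}))$ from Definition \ref{def:unitaryalg}. First I would work at the level of the free unital $*$-algebra on the symbols $u_{ij}$: setting $\mathrm{deg}(u_{ij}) = j - i$ extends uniquely to a $\mathbb{Z}_N$-grading of this free algebra, equivalently to an action $\rho$ of $\mathbb{Z}_N$ by $*$-automorphisms with $\rho_t$ acting as multiplication by $\omega^{td}$ on the component of degree $d$; thus $\rho_t(u_{ij}) = \omega^{t(j-i)}u_{ij}$. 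Under this grading $u_{ij}$ has degree $j - i$, $u_{ij}^{*}$ has degree $i - j$, and the entry $(\overline{u}_{\textup{R}})_{ij} = \omega^{-i(j-i)}u_{ij}^{*}$ of the twisted conjugate matrix therefore also has degree $i - j$, the scalar phase being irrelevant to homogeneity.

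The content of the proof is then to check that the defining relations are homogeneous, so that they cut out a graded ideal and $\rho$ descends. The relations say that $u$ and $\overline{u}_{\textup{R}}$ are unitary, i.e., that each entry of $uu^{*} - 1$, $u^{*}u - 1$, $\overline{u}_{\textup{R}}\,\overline{u}_{\textup{R}}^{*} - 1$, $\overline{u}_{\textup{R}}^{*}\,\overline{u}_{\textup{R}} - 1$ vanishes. In each of the four products the $(i,j)$ entry is a sum $\sum_{k}$ in which every summand is homogeneous of one and the same degree: a typical term $u_{ik}u_{jk}^{*}$ of $(uu^{*})_{ij}$ has degree $(k-i) + (j-k) = j-i$, a typical term $(\overline{u}_{\textup{R}})_{ik}(\overline{u}_{\textup{R}}^{*})_{kj}$ of $(\overline{u}_{\textup{R}}\,\overline{u}_{\textup{R}}^{*})_{ij}$ has degree $(i-k) + (k-j) = i-j$, and similarly for the remaining two products; hence every entry of each product is homogeneous. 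The right-hand sides $\delta_{ij}1$ are homogeneous of degree $0$, which on the diagonal matches $j - i = 0$ and off the diagonal is simply the zero element, so all relations are homogeneous. Consequently $\rho$ passes to $\mathcal{A}(\textup{U}_N^+(\textup{R}))$ and, being given on generators by unitary scalars, extends by the universal property to a $*$-homomorphism $\rho^{\textup{C}(\textup{U}_N^+(\textup{R}))} \colon \textup{C}(\textup{U}_N^+(\textup{R})) \to \textup{C}(\textup{U}_N^+(\textup{R})) \otimes \textup{C}(\mathbb{Z}_N) \cong \textup{C}(\mathbb{Z}_N, \textup{C}(\textup{U}_N^+(\textup{R})))$ with $\rho_t^{\textup{C}(\textup{U}_N^+(\textup{R}))}(u_{ij}) = \omega^{t(j-i)}u_{ij}$. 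Uniqueness is immediate since the $u_{ij}$ generate.

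It remains to verify the two conditions of Definition \ref{def:znalg}. For (1) I would evaluate both sides on the generators: using $\Delta_{\mathbb{Z}_N}(z) = z \otimes z$, both $(\rho^{\textup{C}(\textup{U}_N^+(\textup{R}))} \otimes \mathrm{id})\circ\rho^{\textup{C}(\textup{U}_N^+(\textup{R}))}$ and $(\mathrm{id} \otimes \Delta_{\mathbb{Z}_N})\circ\rho^{\textup{C}(\textup{U}_N^+(\textup{R}))}$ send $u_{ij}$ to $u_{ij} \otimes z^{j-i} \otimes z^{j-i}$, and since both composites are $*$-homomorphisms they coincide on all of $\textup{C}(\textup{U}_N^+(\textup{R}))$. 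For (2) I would argue as in Remark \ref{rem:grading}: $\textup{C}(\textup{U}_N^+(\textup{R}))$ is spanned by its $\rho$-homogeneous elements (being generated by the homogeneous $u_{ij}$), and for homogeneous $x$ of degree $d$ one has $\rho^{\textup{C}(\textup{U}_N^+(\textup{R}))}(x)(1 \otimes z^{s}) = x \otimes z^{d+s}$; letting $s$ range over $\mathbb{Z}_N$ shows that $\rho^{\textup{C}(\textup{U}_N^+(\textup{R}))}(\textup{C}(\textup{U}_N^+(\textup{R})))(1 \otimes \textup{C}(\mathbb{Z}_N))$ contains $x \otimes \textup{C}(\mathbb{Z}_N)$ for each such $x$, hence all of $\textup{C}(\textup{U}_N^+(\textup{R})) \otimes \textup{C}(\mathbb{Z}_N)$. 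I do not expect any genuine obstacle here; the only point demanding a little care is the degree bookkeeping for the unitarity relations of the twisted conjugate $\overline{u}_{\textup{R}}$, and even there the $\omega$-phases play no role in the homogeneity question.
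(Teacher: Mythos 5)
Your proposal is correct and follows exactly the template the paper intends: the paper omits this proof by reference to the analogous result (Proposition 2.17 of the cited work, mirrored by the preceding proposition for $\textup{C}(\textup{S}_N^+(\textup{R}))$ in this paper), which likewise reduces everything to checking that the defining relations are homogeneous for the grading $\mathrm{deg}(u_{ij})=j-i$ so that the $\mathbb{Z}_N$-action on the free algebra descends. Your degree bookkeeping for the unitarity relations of $u$ and $\overline{u}_{\textup{R}}$ is right, and your verification of the two conditions of Definition \ref{def:znalg} is the standard argument.
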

    
\begin{proof}
The proof is similar to that of Proposition 2.17 of \cite{BJR2022} and so we omit it.  
\end{proof}
    
\begin{proposition}\label{prop:comultuni}
There is a unique unital $*$-homomorphism \[\Delta_{\textup{U}_N^{+}(\textup{R})} : \textup{C}(\textup{U}_N^{+}(\textup{R})) \rightarrow \textup{C}(\textup{U}_N^{+}(\textup{R})) \boxtimes_{\textup{R}} \textup{C}(\textup{U}_N^{+}(\textup{R}))\] such that $\Delta_{\textup{U}_N^{+}(\textup{R})}(u_{ij})=\sum_{k \in \mathbb{Z}_N}j_1(u_{ik})j_2(u_{kj})$ for $i,j \in \mathbb{Z}_N$. Furthermore, $\Delta_{\textup{U}_N^{+}(\textup{R})}$ is $\mathbb{Z}_N$\nobreakdash-equivariant, coassociative and bisimplifiable \textup{(}see Definition \textup{\ref{def:bcqg})}.
\end{proposition}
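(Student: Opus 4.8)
The plan is to mirror the proof of Proposition \ref{prop:comult}. Put $U_{ij}=\sum_{k\in\mathbb{Z}_N}j_1(u_{ik})j_2(u_{kj})$ for $i,j\in\mathbb{Z}_N$ and $U=(U_{ij})$; just as there, each $U_{ij}$ is homogeneous of degree $j-i$, and by the universal property of $\textup{C}(\textup{U}_N^{+}(\textup{R}))$ (Definition \ref{def:unitaryalg}) a necessarily unique unital $*$-homomorphism $\Delta_{\textup{U}_N^{+}(\textup{R})}$ with $\Delta_{\textup{U}_N^{+}(\textup{R})}(u_{ij})=U_{ij}$ exists if and only if the two matrices $U$ and $\overline{U}_{\textup{R}}:=(\omega^{-i(j-i)}U^*_{ij})_{i,j}$ are unitary over $\textup{C}(\textup{U}_N^{+}(\textup{R}))\boxtimes_{\textup{R}}\textup{C}(\textup{U}_N^{+}(\textup{R}))$.

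Both unitarities I would deduce from one elementary observation, essentially already contained in the computations verifying relations (3)--(4) in the proof of Proposition \ref{prop:comult}: if $v=(v_{ij})$ is a unitary matrix over a $\mathbb{Z}_N$-$\textup{C}^*$-algebra $A$ whose entries are homogeneous with degrees of the form $\deg(v_{ij})=f(i)+g(j)$ (so that $\deg(v_{ik}v^*_{jk})$ is independent of $k$), then $V:=\bigl(\sum_k j_1(v_{ik})j_2(v_{kj})\bigr)_{i,j}$ is again unitary over $A\boxtimes_{\textup{R}}A$. To prove it, expand $\sum_k V_{ik}V^*_{jk}$, merge the adjacent $j_2$-factors into $j_2(v_{lk}v^*_{mk})$, commute it past the remaining $j_1$-factor picking up a $k$-independent phase, pull that phase out of $\sum_k$, apply $vv^*=I$ to produce $\delta_{lm}$, and note that $\delta_{lm}$ makes $\deg(v_{lk}v^*_{mk})=0$ so the phase is $1$ and the sum collapses to $\delta_{ij}$; the computation of $\sum_k V^*_{ki}V_{kj}$ is simpler still, the $j_1$-factors being already adjacent and $v^*v=I$ applying directly. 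Taking $v=u$ gives that $U$ is unitary.

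For $\overline{U}_{\textup{R}}$ the crux is the identity $\overline{U}_{\textup{R},ij}=\sum_{k}j_1(\overline{u}_{\textup{R},ik})j_2(\overline{u}_{\textup{R},kj})$, that is, $\overline{U}_{\textup{R}}$ arises from the unitary $\overline{u}_{\textup{R}}$ by the very same recipe. To verify it, start from $\overline{U}_{\textup{R},ij}=\omega^{-i(j-i)}U^*_{ij}=\omega^{-i(j-i)}\sum_k j_2(u^*_{kj})j_1(u^*_{ik})$, use the commutation relation to rewrite $j_2(u^*_{kj})j_1(u^*_{ik})=\omega^{-(i-k)(k-j)}j_1(u^*_{ik})j_2(u^*_{kj})$, and compare the accumulated exponent $-i(j-i)-(i-k)(k-j)$ with the exponent $-i(k-i)-k(j-k)$ in $j_1(\overline{u}_{\textup{R},ik})j_2(\overline{u}_{\textup{R},kj})=\omega^{-i(k-i)-k(j-k)}j_1(u^*_{ik})j_2(u^*_{kj})$: both equal $i^2-ik+k^2-kj$. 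Since $\overline{u}_{\textup{R}}$ is unitary by Definition \ref{def:unitaryalg} and $\deg(\overline{u}_{\textup{R},ij})=i-j$ is again of the required form, the observation applies once more and yields that $\overline{U}_{\textup{R}}$ is unitary; this produces $\Delta_{\textup{U}_N^{+}(\textup{R})}$.

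The remaining assertions go exactly as in the permutation case. $\mathbb{Z}_N$-equivariance holds because $U_{ij}$ has the same degree $j-i$ as $u_{ij}$, so $\Delta_{\textup{U}_N^{+}(\textup{R})}$ intertwines the coactions on the generators and hence everywhere. Coassociativity follows from Lemma \ref{lem:morphism}, since $(\Delta_{\textup{U}_N^{+}(\textup{R})}\boxtimes_{\textup{R}}\mathrm{id})\circ\Delta_{\textup{U}_N^{+}(\textup{R})}$ and $(\mathrm{id}\boxtimes_{\textup{R}}\Delta_{\textup{U}_N^{+}(\textup{R})})\circ\Delta_{\textup{U}_N^{+}(\textup{R})}$ both send $u_{ij}$ to $\sum_{k,l}j_1(u_{ik})j_2(u_{kl})j_3(u_{lj})$. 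Bisimplifiability follows by the usual cancellation argument from the unitarity of $u$ (and of $U$), exactly as invoked in the proof of Proposition \ref{prop:comult}, and here even more directly since $u$ is unitary by definition. The one step with genuine content is the identity $\overline{U}_{\textup{R},ij}=\sum_k j_1(\overline{u}_{\textup{R},ik})j_2(\overline{u}_{\textup{R},kj})$; once its phase bookkeeping is confirmed, everything else is either the single observation above or a formality.
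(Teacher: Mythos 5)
Your proposal is correct; the paper itself omits this proof (referring to the analogous Proposition 2.18 of \cite{BJR2022}), and the intended argument is exactly the one you give, namely mirroring the proof of Proposition \ref{prop:comult} by verifying that $U$ and $\overline{U}_{\textup{R}}$ are unitary via the commutation phases. Your packaging of the phase bookkeeping into a single observation about unitaries with degrees of the form $f(i)+g(j)$, together with the identity $\overline{U}_{\textup{R},ij}=\sum_k j_1(\overline{u}_{\textup{R},ik})j_2(\overline{u}_{\textup{R},kj})$ (whose exponent check $i^2-ik+k^2-kj$ on both sides is right), is a clean and accurate rendering of the same computation.
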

    
\begin{proof}
The proof is similar to that of Proposition 2.18 of \cite{BJR2022} and so we omit it.       
\end{proof}
    
\begin{definition}\label{def:unitary}
We define the anyonic free unitary quantum group, denoted $\textup{U}_N^{+}(\textup{R})$, to be the anyonic compact quantum group $(\textup{C}(\textup{U}_N^{+}(\textup{R})),\rho^{\textup{C}(\textup{U}_N^+(\textup{R}))},\Delta_{\textup{U}_N^+(\textup{R})})$, constructed above. 
\end{definition}

\begin{theorem}\label{thm:quotient}
There is a unique $\mathbb{Z}_N$\nobreakdash-equivariant Hopf $*$-homomorphism \[\phi: \textup{C}(\textup{U}_N^{+}(\textup{R})) \rightarrow \textup{C}(\textup{S}_N^{+}(\textup{R}))\] such that $\phi(u_{ij})=q_{ij}$ for $i,j \in \mathbb{Z}_N$.
\end{theorem}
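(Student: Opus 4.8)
The plan is to build $\phi$ from the universal property of $\textup{C}(\textup{U}_N^{+}(\textup{R}))$ (Definition \ref{def:unitaryalg}): to get a unital $*$-homomorphism $\phi$ with $\phi(u_{ij})=q_{ij}$ it is enough to verify that the matrices $q=(q_{ij})_{i,j \in \mathbb{Z}_N}$ and $\overline{q}_{\textup{R}}=(\omega^{-i(j-i)}q_{ij}^{*})_{i,j \in \mathbb{Z}_N}$ are unitary in $M_N(\textup{C}(\textup{S}_N^{+}(\textup{R})))$. The first reduction I would make is to observe that relation (2) collapses $\overline{q}_{\textup{R}}$ to $(q_{-i,-j})_{i,j}$, because $\omega^{-i(j-i)}q_{ij}^{*}=\omega^{-i(j-i)}\omega^{-i(i-j)}q_{-i,-j}=q_{-i,-j}$; this matrix is $PqP$ for the self-adjoint permutation unitary $P$ induced by $i \mapsto -i$, hence unitary as soon as $q$ is. So everything reduces to showing that $q$ is unitary, which is exactly the fact promised (and used) after Definition \ref{def:permutationalg}.

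To prove $qq^{*}=1$, I would specialize relation (4) at $k=0$, rewrite the left-hand side as $\delta_{i+j,0}$ using relation (1), and then use relation (2) to replace $q_{i,-l}$ by a scalar multiple of $q_{-i,l}^{*}$. After collecting the phase factors (the surviving one is $\omega^{i(i+j)}$, which is trivial precisely when $i+j=0$) one is left with $\sum_{l}q_{jl}q_{-i,l}^{*}=\delta_{i+j,0}$; relabelling $a=j$, $b=-i$ turns this into $(qq^{*})_{ab}=\delta_{ab}$. Symmetrically, specializing relation (3) at $k=0$, applying relation (1) on the left and relation (2) to turn $q_{-l,i}$ into a multiple of $q_{l,-i}^{*}$, yields $\sum_{l}q_{l,-i}^{*}q_{lj}=\delta_{i+j,0}$, i.e.\ $(q^{*}q)_{ab}=\delta_{ab}$ after the same relabelling. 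Hence $q$, and therefore $\overline{q}_{\textup{R}}$, is unitary, so $\phi$ exists; uniqueness is immediate since the $u_{ij}$ generate $\textup{C}(\textup{U}_N^{+}(\textup{R}))$.

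It then remains to check that $\phi$ is $\mathbb{Z}_N$-equivariant and intertwines the comultiplications, and both are comparisons of morphisms that agree on the generators $u_{ij}$. For equivariance, $\rho^{\textup{C}(\textup{S}_N^{+}(\textup{R}))}\circ\phi$ and $(\phi \otimes \mathrm{id}_{\textup{C}(\mathbb{Z}_N)})\circ\rho^{\textup{C}(\textup{U}_N^{+}(\textup{R}))}$ both send $u_{ij}$ to the function $t \mapsto \omega^{t(j-i)}q_{ij}$, by the formulas for the two coactions. For the Hopf property, Lemma \ref{lem:morphism} gives $(\phi \boxtimes_{\textup{R}} \phi)(j_1(u_{ik})j_2(u_{kj}))=j_1(q_{ik})j_2(q_{kj})$, so $(\phi \boxtimes_{\textup{R}} \phi)\circ\Delta_{\textup{U}_N^{+}(\textup{R})}$ and $\Delta_{\textup{S}_N^{+}(\textup{R})}\circ\phi$ both send $u_{ij}$ to $\sum_{k}j_1(q_{ik})j_2(q_{kj})$, using Propositions \ref{prop:comultuni} and \ref{prop:comult}.

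The main obstacle is the unitarity of $q$: the defining relations of $\textup{C}(\textup{S}_N^{+}(\textup{R}))$ are an $\omega$-twisted version of the ``magic unitary'' relations, and one has to pick exactly the right specializations of relations (3) and (4) and combine them with relation (2) so that every phase factor cancels. The remaining verifications are routine checks on generators.
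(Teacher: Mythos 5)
Your proposal is correct, and its overall architecture matches the paper's proof: reduce existence of $\phi$ to the unitarity of $q$ and $\overline{q}_{\textup{R}}$ via the universal property of $\textup{C}(\textup{U}_N^{+}(\textup{R}))$, verify unitarity from relations (1)--(4), and then check equivariance and the Hopf property on generators. The one genuine difference is your treatment of $\overline{q}_{\textup{R}}$: you observe that relation (2) forces $\omega^{-i(j-i)}q_{ij}^{*}=q_{-i,-j}$, so that $\overline{q}_{\textup{R}}=PqP$ for the self-adjoint permutation unitary $P$ implementing $i\mapsto -i$, and its unitarity is then automatic once $q$ is unitary. The paper instead verifies $\overline{q}_{\textup{R}}\,\overline{q}_{\textup{R}}^{\,*}=\overline{q}_{\textup{R}}^{\,*}\,\overline{q}_{\textup{R}}=1$ by two further direct computations with relations (2)--(4), parallel to the two it does for $q$. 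Your shortcut halves the computational work and makes transparent \emph{why} the twisted conjugate is unitary; the paper's version has the minor advantage of exhibiting the explicit phase bookkeeping that recurs elsewhere (e.g.\ in Corollary \ref{cor:cqmgpermutation}, where $\overline{t}$ is related to $\overline{u}_{\mathrm{R}}$). Your two computations for $q$ itself (relations (3) and (4) specialized at $k=0$, combined with (2)) are, after reindexing, exactly the identities $\sum_k q_{ki}^*q_{kj}=q_{0,j-i}$ and $\sum_k q_{ik}q_{jk}^*=\omega^{-j^2+ji}q_{i-j,0}$ used in the paper, and the phases cancel as you claim.
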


\begin{proof}
We begin by remarking that by the universal property, a necessarily unique unital $*$\nobreakdash-homomorphism $\phi: \textup{C}(\textup{U}_N^{+}(\textup{R})) \rightarrow \textup{C}(\textup{S}_N^{+}(\textup{R}))$ satisfying $\phi(u_{ij})=q_{ij}$ exists if and only if the matrices $q=(q_{ij})_{i,j \in \mathbb{Z}_N}$ and $\overline{q}_{\textup{R}}=(\omega^{-i(j-i)}q^*_{ij})_{i,j \in \mathbb{Z}_N}$ are unitaries. Now, on one hand
\allowdisplaybreaks{
\begin{align*}
\sum_{k \in \mathbb{Z}_N}q^*_{ki}q_{kj}=\sum_{k \in \mathbb{Z}_N}\omega^{-k(k-i)}q_{-k,-i}q_{kj}=q_{0,j-i}=\delta_{ij},    
\end{align*}    
}and on the other,
\allowdisplaybreaks{
\begin{align*}
\sum_{k \in \mathbb{Z}_N}q_{ik}q^*_{jk}={}&\sum_{k \in \mathbb{Z}_N}\omega^{-j(j-k)}q_{ik}q_{-j,-k}\\
={}&\omega^{-j^2+ji}\sum_{k \in \mathbb{Z}_N}\omega^{j(k-i)}q_{ik}q_{-j,-k}\\
={}&\omega^{-j^2+ji}q_{i-j,0}=\omega^{-j^2+ji}\delta_{i-j,0}=\delta_{ij};    
\end{align*}   
}here we have used the relations (2), (3) and (4) from Definition \ref{def:permutationalg}. Thus the matrix $q$ is indeed a unitary. Next,
\allowdisplaybreaks{
\begin{align*}
\sum_{k \in \mathbb{Z}_N}\omega^{k(i-j)}q_{ki}q^*_{kj}={}&\sum_{k \in \mathbb{Z}_N}\omega^{k(i-j)-k(k-j)}q_{ki}q_{-k,-j}\\
={}&\sum_{k \in \mathbb{Z}_N}\omega^{-k(k-i)}q_{ki}q_{-k,-j}\\
={}&q_{0,i-j}=\delta_{ij}, 
\end{align*}   
}and 
\allowdisplaybreaks{
\begin{align*}
\sum_{k \in \mathbb{Z}_N}\omega^{i(i-k)+j(k-j)}q^*_{ik}q_{jk}={}&\sum_{k \in \mathbb{Z}_N}\omega^{i(i-k)+j(k-j)-i(i-k)}q_{-i,-k}q_{jk}\\
={}&\sum_{k \in \mathbb{Z}_N}\omega^{j(k-j)}q_{-i,-k}q_{jk}\\    
={}&\omega^{-j^2+ji}\sum_{k \in \mathbb{Z}_N}\omega^{-j(-k+i)}q_{-i,-k}q_{jk}\\
={}&\omega^{-j^2+ji}q_{j-i,0}=\omega^{-j^2+ji}\delta_{j-i,0}=\delta_{ij},    
\end{align*}   
}where we have again used relations (2), (3) and (4) from Definition \ref{def:permutationalg} above. Therefore $\overline{q}_{\textup{R}}$ is also a unitary and so we have constructed a unique unital $*$\nobreakdash-homomorphism $\phi: \textup{C}(\textup{U}_N^{+}(\textup{R})) \rightarrow \textup{C}(\textup{S}_N^{+}(\textup{R}))$ satisfying $\phi(u_{ij})=q_{ij}$. Since both $u_{ij}$ and $q_{ij}$ have homogeneous degree $j-i$, we see that $\phi$ is $\mathbb{Z}_N$\nobreakdash-equivariant. Finally, that $\phi$ is a Hopf $*$\nobreakdash-homomorphism follows from the expressions of the comultiplications $\Delta_{\textup{U}_N^{+}(\textup{R})}$ and $\Delta_{\textup{S}_N^{+}(\textup{R})}$ evaluated at $u_{ij}$ and at $q_{ij}$, respectively.
\end{proof}

\section{Anyonic quantum symmetries of finite spaces}\label{sec:symm} In this section, we come to the main result of this paper, that of anyonic symmetries of a finite space, relying on the results obtained in the previous section. We begin this section with recalling a few more definitions needed in what follows. 

\begin{definition}\label{def:action}\cite{R2021}
Let $G=(\textup{C}(G),\rho^{\textup{C}(G)},\Delta_G)$ be an anyonic compact quantum group. An action of $G$ (equivalently, a $\textup{C}(G)$\nobreakdash-coaction) on a $\mathbb{Z}_N$\nobreakdash-$\textup{C}^*$\nobreakdash-algebra $(B,\rho^B)$ is a $\mathbb{Z}_N$\nobreakdash-equivariant morphism $\eta^B \in \mathrm{Mor}^{\mathbb{Z}_N}(B, B \boxtimes_{\textup{R}} \textup{C}(G))$ such that
\begin{enumerate}
    \item $(\mathrm{id}_B \boxtimes_{\textup{R}} \Delta_G)\circ \eta^B=(\eta^B \boxtimes_{\textup{R}} \mathrm{id}_{\textup{C}(G)})\circ \eta^B$ (coassociativity);
    \item $\eta^B(B)(1_B \boxtimes_{\textup{R}} \textup{C}(G))=B \boxtimes_{\textup{R}} \textup{C}(G)$ (Podle\'s condition).
\end{enumerate}
\end{definition}

\begin{definition}\label{def:faithful}\cite{R2021}
Let $(B,\rho^B)$ be a $\mathbb{Z}_N$\nobreakdash-$\textup{C}^*$\nobreakdash-algebra equipped with a $G$\nobreakdash-action $\eta^B \in \mathrm{Mor}^{\mathbb{Z}_N}(B,B\boxtimes_{\textup{R}} \textup{C}(G))$, where $G=(\textup{C}(G),\rho^{\textup{C}(G)},\Delta_G)$ is an anyonic compact quantum group. A $\mathbb{Z}_N$\nobreakdash-equivariant state $f : B \rightarrow \mathbb{C}$ on $B$ is one that satisfies 
\[(f \otimes \mathrm{id}_{\textup{C}(\mathbb{Z}_N)})\rho^B(b)=f(b)1_{\textup{C}(\mathbb{Z}_N)} \text{ for all }b \in B.\] The action $\eta^{B}$ is said to be faithful if the $*$-algebra generated by $\{(f \boxtimes_{\textup{R}} \mathrm{id}_{\textup{C}(G)})\eta^B(B) \mid f : B \rightarrow \mathbb{C} \text{ a } \mathbb{Z}_N\text{-equivariant state}\}$ is norm-dense in $\textup{C}(G)$.
\end{definition}

Let $X_N=\{x_i \mid i \in \mathbb{Z}_N\}$ be the finite space consisting of $N$ points. Then $\textup{C}(X_N)$ is the $\textup{C}^*$\nobreakdash-algebra generated by $N$ orthogonal projections $p_i$, $i \in \mathbb{Z}_N$ such that $\sum_{i \in \mathbb{Z}_N}p_i=1$, i.e., $\textup{C}(X_N)=\textup{C}^*\{p_i \mid p_i^2=p_i=p_i^*, \ \ \sum_{j \in \mathbb{Z}_N}p_j=1, \ \ i \in \mathbb{Z}_N\}$. $\textup{C}(X_N)$ comes equipped with a natural $\mathbb{Z}_N$\nobreakdash-action $\rho^{\textup{C}(X_N)} : \textup{C}(X_N) \rightarrow \textup{C}(X_N) \otimes \textup{C}(\mathbb{Z}_N)$ given by $\rho^{\textup{C}(X_N)}(p_j)=\sum_{i \in \mathbb{Z}_N} p_{j-i} \otimes \delta_{i}$. We introduce the following elements: for each $j \in \mathbb{Z}_N$, let \[P_j=\frac{1}{N}\sum_{i \in \mathbb{Z}_N}\omega^{ij}p_i.\] It follows from Eq.\eqref{eq:relation} that besides forming a basis of $\textup{C}(X_N)$, the elements $P_i$, $i \in \mathbb{Z}_N$ are homogeneous, $\mathrm{deg}(P_i)=i$ and satisfy
\begin{equation}\label{eq:P's}
P_0=\frac{1}{N}, \quad P_i^*=P_{-i}, \quad P_iP_j=\frac{1}{N}P_{i+j}.    
\end{equation}
Collecting the above relations together we have, \[\textup{C}(X_N)=\textup{C}^*\{P_i \mid P_0=\frac{1}{N},  P^*_i=P_{-i}, P_iP_j=\frac{1}{N}P_{i+j}, i,j \in \mathbb{Z}_N\}.\]

\begin{definition}\label{def:cat}
We define the category $\mathcal{C}(X_N)$ as follows. 
\begin{enumerate}
\item An object of $\mathcal{C}(X_N)$ is a pair $(G,\eta)$, where $G=(\textup{C}(G),\rho^{\textup{C}(G)},\Delta_G)$ is an anyonic compact quantum group, and $\eta \in \mathrm{Mor}^{\mathbb{Z}_N}(\textup{C}(X_N), \textup{C}(X_N) \boxtimes_{\textup{R}} \textup{C}(G))$ is a faithful (see Definition \ref{def:faithful}) action of $G$ on $\textup{C}(X_N)$. 
\item Let $(G_1,\eta_1)$ and $(G_2,\eta_2)$ be two objects in $\mathcal{C}(X_N)$. A morphism $\phi : (G_1,\eta_1) \rightarrow (G_2,\eta_2)$ in $\mathcal{C}(X_N)$ is by definition a $\mathbb{Z}_N$\nobreakdash-equivariant Hopf $*$-homomorphism $\phi : \textup{C}(G_2) \rightarrow \textup{C}(G_1)$ such that $(\mathrm{id}_{\textup{C}(X_N)} \boxtimes_{\textup{R}} \phi)\circ \eta_2=\eta_1$.
\end{enumerate} 
\end{definition}
    
\begin{definition}
A terminal object in $\mathcal{C}(X_N)$ is called the anyonic quantum symmetry group of $X_N$ and denoted $(\mathrm{Aut}(\textup{C}(X_N)),\eta^{X_N})$. 
\end{definition}
    
A priori, it is not clear that $(\mathrm{Aut}(\textup{C}(X_N)),\eta^{X_N})$ exists but as we shall see below, it indeed does. This is the main theorem of this section and we shall step by step build up to its proof, identifying it explicitly, in the process.

\begin{proposition}\label{prop:universal}
Let $(G,\eta) \in \mathrm{Obj}(\mathcal{C}(X_N))$ be an object of the category $\mathcal{C}(X_N)$. Then there is a surjective $\mathbb{Z}_N$\nobreakdash-equivariant Hopf $*$\nobreakdash-homomorphism $\psi_G : \textup{C}(\textup{S}^+_N(\mathrm{R})) \rightarrow \textup{C}(G)$.
\end{proposition}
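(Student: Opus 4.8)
The plan is to carry out the anyonic analogue of Wang's argument \cite{wang} (following the $\mathbb{T}$-case in \cite{BJR2022}): read off the coefficient matrix of the action $\eta$ in the homogeneous basis $\{P_i\}$ of $\textup{C}(X_N)$, show it satisfies the defining relations of $\textup{C}(\textup{S}_N^+(\textup{R}))$, and appeal to universality and to faithfulness of $\eta$. To begin, since $\textup{C}(X_N)$ is finite dimensional with basis $\{P_i\}$ and the braiding twists only the product, the braided tensor product $\textup{C}(X_N)\boxtimes_{\textup{R}}\textup{C}(G)=j_1(\textup{C}(X_N))j_2(\textup{C}(G))$ is, as a vector space, $\bigoplus_{i\in\mathbb{Z}_N}j_1(P_i)j_2(\textup{C}(G))$, so there are unique $a_{ij}\in\textup{C}(G)$ with $\eta(P_j)=\sum_{i\in\mathbb{Z}_N}j_1(P_i)j_2(a_{ij})$. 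Because $\mathrm{deg}(P_j)=j$, $\mathrm{deg}(j_1(P_i))=i$ and $\eta$ is $\mathbb{Z}_N$-equivariant, each $a_{ij}$ is homogeneous of degree $j-i$, matching $\mathrm{deg}(q_{ij})$.

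Next I would unpack the axioms of a faithful action, using only $P_0=\tfrac1N$, $P_i^*=P_{-i}$, $P_iP_j=\tfrac1N P_{i+j}$ and the braided commutation $j_1(x)j_2(y)=\omega^{\mathrm{deg}(x)\mathrm{deg}(y)}j_2(y)j_1(x)$ for homogeneous $x,y$ (stated before Lemma \ref{lem:action}). Unitality $\eta(P_0)=\tfrac1N 1$ forces $a_{i0}=\delta_{i0}$; applying $\eta$ to $P_j^*=P_{-j}$ and pushing the involution through the braiding yields $a_{ij}^*=\omega^{-i(i-j)}a_{-i,-j}$, i.e.\ relation (2) of Definition \ref{def:permutationalg}; and applying $\eta$ to $P_iP_j=\tfrac1N P_{i+j}$ and commuting the inner $j_2$ past $j_1$ yields relation (3). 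Relations (1)--(3) already force $a=(a_{ij})\in M_N(\textup{C}(G))$ to be an isometry, $\sum_k a_{ki}^*a_{kj}=\delta_{ij}$ (exactly as in the first computation in the proof of Theorem \ref{thm:quotient}). The Podle\'s condition $\eta(\textup{C}(X_N))(1\boxtimes_{\textup{R}}\textup{C}(G))=\textup{C}(X_N)\boxtimes_{\textup{R}}\textup{C}(G)$, rewritten coordinatewise, says that the columns of $a$ generate the right Hilbert module $\textup{C}(G)^N$; a square isometry with full range is unitary, so $a$ is unitary, and the resulting identity $aa^*=1$ is, modulo (1)--(3), precisely relation (4) (and it also gives $a_{0i}=\delta_{0i}$), as in the second computation in the proof of Theorem \ref{thm:quotient}; alternatively all four relations may be checked by the same phase-tracking computation as in \cite{BJR2022}. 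By the universal property of $\textup{C}(\textup{S}_N^+(\textup{R}))$ we then obtain a unique unital $*$-homomorphism $\psi_G\colon\textup{C}(\textup{S}_N^+(\textup{R}))\to\textup{C}(G)$ with $\psi_G(q_{ij})=a_{ij}$.

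It remains to verify the asserted properties. The map $\psi_G$ is $\mathbb{Z}_N$-equivariant since $q_{ij}$ and $a_{ij}$ have the same degree; coassociativity of $\eta$, evaluated on $P_j$ and compared coordinatewise, gives $\Delta_G(a_{ij})=\sum_k j_1(a_{ik})j_2(a_{kj})$, which with the formula for $\Delta_{\textup{S}_N^+(\textup{R})}$ in Proposition \ref{prop:comult} says precisely that $\psi_G$ is a Hopf $*$-homomorphism. For surjectivity, $\eta$ is valued in $\textup{C}(X_N)\boxtimes_{\textup{R}}\psi_G(\textup{C}(\textup{S}_N^+(\textup{R})))$ (each $a_{ij}\in\mathrm{im}\,\psi_G$ and $\eta$ is multiplicative), so every slice $(f\boxtimes_{\textup{R}}\mathrm{id}_{\textup{C}(G)})\eta(\textup{C}(X_N))$ by a $\mathbb{Z}_N$-equivariant state $f$ lies in the $\textup{C}^*$-subalgebra $\mathrm{im}\,\psi_G$; by faithfulness of $\eta$ (Definition \ref{def:faithful}) these generate a dense $*$-subalgebra of $\textup{C}(G)$, forcing $\mathrm{im}\,\psi_G=\textup{C}(G)$. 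Here one should check that the slice $f\boxtimes_{\textup{R}}\mathrm{id}_{\textup{C}(G)}$ from \cite{mrw2014} restricts to the corresponding slice over $\mathrm{im}\,\psi_G$. I expect the genuine difficulty to lie in producing relation (4): unlike (1)--(3) it is not a consequence of $\eta$ being merely a unital $\mathbb{Z}_N$-equivariant $*$-homomorphism and really requires the Podle\'s (coaction) axiom -- the anyonic shadow of the classical fact that the coefficient matrix of an action on $N$ points is a two-sided magic unitary.
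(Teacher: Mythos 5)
Your skeleton matches the paper's proof up to a point: you expand $\eta(P_j)=\sum_i j_1(P_i)j_2(a_{ij})$, read off the degrees, derive $a_{i0}=\delta_{i0}$, relation (2) and relation (3) from the three defining identities of $\textup{C}(X_N)$, and then conclude by universality, with equivariance from the degrees, the Hopf property from coassociativity, and surjectivity from faithfulness --- all exactly as in the paper. Where you diverge is the source of the remaining relations: the paper passes to the bosonization $G\rtimes\mathbb{Z}_N$, observes that the relations survive there, and invokes the fact that $G\rtimes\mathbb{Z}_N$ is an ordinary compact quantum group to ``obtain the other half,'' i.e.\ relation (4); you instead try to stay in the braided world, reading the Podle\'s condition as density of the range of the module map $a$ on $\textup{C}(G)^N$ and arguing ``isometry with dense range, hence unitary.''

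The gap is in the isometry half of that argument. Using only relations (2) and (3) one gets $\sum_k a_{ki}^*a_{kj}=\sum_k\omega^{-k(k-i)}a_{-k,-i}a_{kj}=a_{0,j-i}$, so $a^*a=1$ is \emph{equivalent} to $a_{0m}=\delta_{m0}$ for all $m$ --- which is the half of relation (1) that the unitality computation does \emph{not} give you (it gives only $a_{m0}=\delta_{m0}$). The first computation in the proof of Theorem \ref{thm:quotient}, which you cite, silently uses $q_{0m}=\delta_{m0}$ as an axiom; for the coefficients $a_{ij}$ of an arbitrary action this identity is not a consequence of $\eta$ being a unital equivariant $*$-homomorphism. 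This is the anyonic shadow of the classical fact (visible through Lemma \ref{lem:magic}) that the $*$-homomorphism axioms control only the rows of the would-be magic unitary: the coefficient matrix of a constant map $X_N\to X_N$ satisfies all the row conditions but has column sums $\neq 1$, i.e.\ fails exactly $a_{0m}=\delta_{m0}$. Consequently your Podle\'s step delivers only surjectivity of the adjointable operator $a$, and a surjective adjointable endomorphism of $\textup{C}(G)^N$ need not be unitary (think of the adjoint of a shift). So the isometry condition, like relation (4), genuinely requires the coaction axioms --- coassociativity and Podle\'s together --- and this is precisely what the paper's detour through the bosonization buys: there $(z^ia_{ij})_{ij}$ becomes a corepresentation of an honest compact quantum group and the standard theory forces unitarity, from which both $a_{0m}=\delta_{m0}$ and relation (4) fall out. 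Your closing instinct that (4) is the step needing Podle\'s is right; the error is in believing that $a^*a=1$ comes for free from (1)--(3). (A smaller quibble: even granting unitarity, $aa^*=1$ recovers only the $k=0$ instances of relation (4), and one needs an extra pass through relation (3) --- multiply the $k=0$ identity by a suitable $a_{j s_0}$ and use same-row orthogonality --- to promote these to all $k$.)
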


\begin{proof}
Since the elements $P_0,\dots,P_{N-1}$ form a basis of $\textup{C}(X_N)$, $\eta$ is completely determined by its values on $P_i$, $i \in \mathbb{Z}_N$. Therefore, we let $\eta(P_j)=\sum_{i \in \mathbb{Z}_N} j_1(P_i)j_2(a_{ij})$, where $a_{ij} \in \textup{C}(G)$ and we observe that the following relations hold in $\textup{C}(X_N) \boxtimes_{\mathrm{R}} C(G)$:
\begin{equation}
\eta(P_0)=\frac{1}{N}\eta(1), \quad \eta(P_i^*)=\eta(P_{-i}), \quad \eta(P_iP_j)=\frac{1}{N}\eta(P_{i+j}).
\end{equation}   
We write these equations in terms of $a_{ij}$, $i,j \in \mathbb{Z}_N$, for which we simply compute, using the fact that $\eta$ is a unital $*$\nobreakdash-homomorphism. Before starting to compute, we observe that since each $P_i$ is homogeneous of degree $i$ and $\eta$ is $\mathbb{Z}_N$\nobreakdash-equivariant, $a_{ij}$ is homogeneous of degree $j-i$. Now for the first relation,
\allowdisplaybreaks{
\begin{align*}
    \eta(P_0)=\sum_{i \in \mathbb{Z}_N}j_1(P_i)j_2(a_{i0})
\end{align*}
}must equal 
\allowdisplaybreaks{
\begin{align*}
    \frac{1}{N}j_1(1)j_2(1)=j_1(P_0)j_2(1),
\end{align*}    
}which implies $a_{i0}=\delta_{i0}$, $i \in \mathbb{Z}_N$. The left-hand side of the second relation reads
\allowdisplaybreaks{
\begin{align*}
    \eta(P_j^*)={}&\sum_{i \in \mathbb{Z}_N}j_2(a^*_{ij})j_1(P_i^*)\\
    ={}&\sum_{i \in \mathbb{Z}_N}\omega^{-(j-i)i}j_1(P^*_i)j_2(a^*_{ij}),
\end{align*}    
}whereas, the right-hand side reads
\allowdisplaybreaks{
\begin{align*}
    \eta(P_{-j})={}&\sum_{i \in \mathbb{Z}_N}j_1(P_{-i})j_2(a_{-i,-j})\\     
    ={}&\sum_{i \in \mathbb{Z}_N}j_1(P^*_{i})j_2(a_{-i,-j}),    
\end{align*}   
}which upon equating with the left-hand side yields $a^*_{ij}=\omega^{-i(i-j)}a_{-i,-j}$, for $i,j \in \mathbb{Z}_N$. Finally, for the third relation, we have on one hand,
\allowdisplaybreaks{
\begin{align*}
    \eta(P_iP_j)={}&\sum_{k,l \in \mathbb{Z}_N}j_1(P_{k})j_2(a_{k i})j_1(P_{l})j_2(a_{lj})\\
    ={}&\sum_{k,l \in \mathbb{Z}_N}\omega^{-(i-k)l}j_1(P_{k}P_{l})j_2(a_{k i}a_{lj})\\
    ={}&\sum_{k,l \in \mathbb{Z}_N}\omega^{-(i-k)l}\frac{1}{N}j_1(P_{k+l})j_2(a_{k i}a_{lj})\\
    ={}&\sum_{\alpha,l \in \mathbb{Z}_N}\omega^{-(i-\alpha+l)l}\frac{1}{N}j_1(P_{\alpha})j_2(a_{\alpha-l,i}a_{lj})\\
    ={}&\sum_{\alpha \in \mathbb{Z}_N}j_1(P_{\alpha})j_2\bigl(\sum_{l \in \mathbb{Z}_N}\omega^{-(i-\alpha+l)l}\frac{1}{N}a_{\alpha-l,i}a_{lj}\bigr),
\end{align*}    
}(here, in the third equality, we have replaced $k+l$ by $\alpha$ and $k$ by $\alpha-l$) and on the other, 
\allowdisplaybreaks{
\begin{align*}
    \frac{1}{N}\eta(P_{i+j})={}&\frac{1}{N}\sum_{\alpha \in \mathbb{Z}_N}j_1(P_{\alpha})j_2(a_{\alpha,i+j})\\
    ={}&\sum_{\alpha \in \mathbb{Z}_N}j_1(P_{\alpha})j_2(\frac{1}{N}a_{\alpha,i+j});
\end{align*}    
}upon equating the two, we obtain $\sum_{l \in \mathbb{Z}_N}\omega^{-(i-\alpha+l)l}\frac{1}{N}a_{\alpha-l,i}a_{lj}=\frac{1}{N}a_{\alpha,i+j}$, i.e., $\sum_{l \in \mathbb{Z}_N}\omega^{-(i-\alpha+l)l}a_{\alpha-l,i}a_{lj}=a_{\alpha,i+j}$, $i,j,\alpha \in \mathbb{Z}_N$. The last relation still holds after passing to the bosonization $G \rtimes \mathbb{Z}_N$, (see Appendix \ref{appendix:boso}). Since $G \rtimes \mathbb{Z}_N$ is a compact quantum group, we obtain the other half, namely, $\sum_{l \in \mathbb{Z}_N}\omega^{-i(l-j)}a_{jl}a_{i,\alpha-l}=a_{i+j,\alpha}$, $i,j,\alpha \in \mathbb{Z}_N$. 

Collecting all the relations together, we obtain that $a_{ij}$ for $i,j \in \mathbb{Z}_N$ satisfy the relations (1)-(4) in Definition \ref{def:permutationalg}, and so by universality of the $\textup{C}^*$\nobreakdash-algebra $\textup{C}(\textup{S}^+_N(\mathrm{R}))$, there is a unique unital $*$\nobreakdash-homomorphism $\psi_G : \textup{C}(\textup{S}^+_N(\mathrm{R})) \rightarrow \textup{C}(G)$ such that $\psi_G(q_{ij})=a_{ij}$ for each $i,j \in \mathbb{Z}_N$. As remarked above each $a_{ij}$ is homogeneous of degree $j-i$, so $\psi_G$ is $\mathbb{Z}_N$\nobreakdash-equivariant. That $\psi_G$ is a Hopf $*$\nobreakdash-homomorphism follows from the fact that $\eta$ is coassociative and $\psi_G$ is surjective because of faithfulness of $\eta$, yielding all the requirements of $\psi_G$ and thus completing the proof.
\end{proof}

\begin{proposition}\label{prop:permobject}
There is a unique unital $*$\nobreakdash-homomorphism $\eta^{\textup{C}(X_N)} : \textup{C}(X_N) \rightarrow \textup{C}(X_N) \boxtimes_{\mathrm{R}} \textup{C}(\textup{S}^+_N(\mathrm{R}))$ such that $\eta^{\textup{C}(X_N)}(P_j)=\sum_{i \in \mathbb{Z}_N}j_1(P_i)j_2(q_{ij})$ for $i,j \in \mathbb{Z}_N$. Furthermore, $\eta^{\textup{C}(X_N)}$ is $\mathbb{Z}_N$\nobreakdash-equivariant, coassociative and satisfies Podle\'s condition \textup{(}see Definition \textup{\ref{def:action})}.
\end{proposition}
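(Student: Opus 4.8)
The plan is to establish the existence of $\eta^{\textup{C}(X_N)}$ via the universal property of $\textup{C}(X_N)$, then verify the three structural properties. For existence, set $E_j = \sum_{i \in \mathbb{Z}_N}j_1(P_i)j_2(q_{ij}) \in \textup{C}(X_N) \boxtimes_{\textup{R}} \textup{C}(\textup{S}^+_N(\textup{R}))$ and check that the $E_j$ satisfy exactly the defining relations of $\textup{C}(X_N)$, namely $E_0 = \frac{1}{N}1$, $E_j^* = E_{-j}$, and $E_iE_j = \frac{1}{N}E_{i+j}$. These computations mirror, in reverse, the calculation in the proof of Proposition \ref{prop:universal}: the relation $E_0 = \frac{1}{N}1$ uses $q_{i0} = \delta_{i0}$; the relation $E_j^* = E_{-j}$ uses relation (2) of Definition \ref{def:permutationalg} together with the commutation of $j_1$ and $j_2$ (picking up the phase $\omega^{-(j-i)i}$ since $P_i$ has degree $i$ and $q_{ij}$ has degree $j-i$); and $E_iE_j = \frac{1}{N}E_{i+j}$ uses $P_kP_l = \frac{1}{N}P_{k+l}$, the braiding phase, and relation (3) of Definition \ref{def:permutationalg} after reindexing $\alpha = k+l$. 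Once these are verified, the universal property of $\textup{C}(X_N)$ yields a unique unital $*$-homomorphism $\eta^{\textup{C}(X_N)}$ with the stated values on the generators $P_j$.

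Next I would check $\mathbb{Z}_N$-equivariance. Since $j_1(P_i)j_2(q_{ij})$ is homogeneous of degree $i + (j-i) = j$ in $\textup{C}(X_N) \boxtimes_{\textup{R}} \textup{C}(\textup{S}^+_N(\textup{R}))$, and $P_j$ is homogeneous of degree $j$ in $\textup{C}(X_N)$, the map $\eta^{\textup{C}(X_N)}$ preserves degrees, which is precisely $\mathbb{Z}_N$-equivariance (equivalently, one checks $\rho^{\textup{C}(X_N) \boxtimes_{\textup{R}} \textup{C}(\textup{S}^+_N(\textup{R}))} \circ \eta^{\textup{C}(X_N)} = (\eta^{\textup{C}(X_N)} \otimes \mathrm{id}_{\textup{C}(\mathbb{Z}_N)}) \circ \rho^{\textup{C}(X_N)}$ on each $P_j$, both sides giving $\eta^{\textup{C}(X_N)}(P_j) \otimes z^j$).

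For coassociativity, I would verify that $(\mathrm{id}_{\textup{C}(X_N)} \boxtimes_{\textup{R}} \Delta_{\textup{S}^+_N(\textup{R})}) \circ \eta^{\textup{C}(X_N)}$ and $(\eta^{\textup{C}(X_N)} \boxtimes_{\textup{R}} \mathrm{id}_{\textup{C}(\textup{S}^+_N(\textup{R}))}) \circ \eta^{\textup{C}(X_N)}$ agree on each $P_j$; using Lemma \ref{lem:morphism} and the formula $\Delta_{\textup{S}^+_N(\textup{R})}(q_{kj}) = \sum_{l}j_1(q_{kl})j_2(q_{lj})$ (Proposition \ref{prop:comult}), both sides should reduce to $\sum_{i,k \in \mathbb{Z}_N}j_1(P_i)j_2(q_{ik})j_3(q_{kj})$, exactly as the coassociativity of $\Delta_{\textup{S}^+_N(\textup{R})}$ was checked in the proof of Proposition \ref{prop:comult}. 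Finally, the Podle\'s condition $\eta^{\textup{C}(X_N)}(\textup{C}(X_N))(1_{\textup{C}(X_N)} \boxtimes_{\textup{R}} \textup{C}(\textup{S}^+_N(\textup{R}))) = \textup{C}(X_N) \boxtimes_{\textup{R}} \textup{C}(\textup{S}^+_N(\textup{R}))$ follows because $q$ is unitary (Theorem \ref{thm:quotient}): one inverts the relation $\eta^{\textup{C}(X_N)}(P_j) = \sum_i j_1(P_i)j_2(q_{ij})$ to recover $j_1(P_i)j_2(q_{kj})$-type terms, hence all of $j_1(\textup{C}(X_N))j_2(\textup{C}(\textup{S}^+_N(\textup{R})))$, inside the left-hand side.

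The bookkeeping of the braiding phases in the $E_iE_j = \frac{1}{N}E_{i+j}$ check is the one place requiring genuine care — one must correctly commute $j_2(a_{ki})$ past $j_1(P_l)$ (phase $\omega^{-\deg(P_l)\deg(a_{ki})} = \omega^{-l(i-k)}$) and then match the resulting sum against relation (3) after the substitution $\alpha = k+l$, $k = \alpha - l$; everything else is a routine transcription of arguments already carried out earlier in the paper. I would remark that, since the relations and verifications are formally identical (after replacing $a_{ij}$ by $q_{ij}$) to those appearing in Proposition \ref{prop:universal} and Proposition \ref{prop:comult}, the proof can be kept brief by pointing to those computations.
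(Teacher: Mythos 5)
Your proposal is correct and follows essentially the same route as the paper: define $P'_j=\sum_i j_1(P_i)j_2(q_{ij})$, verify the three defining relations of $\textup{C}(X_N)$ using the braiding phases together with relations (1)--(3) of Definition \ref{def:permutationalg}, invoke the universal property, and deduce equivariance from the homogeneity of degree $j$. The only cosmetic difference is that the paper delegates coassociativity and the Podle\'s condition to the analogous argument in \cite{BJR2022}, whereas you sketch them directly; your sketches are the standard ones and are fine.
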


\begin{proof}
Let $P'_j=\sum_{i \in \mathbb{Z}_N}j_1(P_i)j_2(q_{ij})$ for $i,j \in \mathbb{Z}_N$. We remark that each $P'_j$ is homogeneous of degree $j$. Now, by the universal property, we see that a (necessarily unique) $*$\nobreakdash-homomorphism $\eta^{\textup{C}(X_N)}$ satisfying $\eta^{\textup{C}(X_N)}(P_j)=P'_j$ exists if and only if $P'_j$ satisfy $P'_0=\frac{1}{N}$, $P'^*_i=P'_{-i}$ and $P'_iP'_j=\frac{1}{N}P'_{i+j}$, for $i,j \in \mathbb{Z}_N$. To see that this is indeed the case, we again compute. For the first relation, we observe that
\allowdisplaybreaks{
\begin{align*}
    P'_0={}&\sum_{i \in \mathbb{Z}_N}j_1(P_i)j_2(q_{i0})\\
    ={}&\sum_{i \in \mathbb{Z}_N}j_1(P_i)j_2(\delta_{i0})=\frac{1}{N},
\end{align*}
}where we have used relation (1) of Definition \ref{def:permutationalg}. Next,
\allowdisplaybreaks{
\begin{align*}
    P'^*_j={}&\sum_{i \in \mathbb{Z}_N}j_2(q^*_{ij})j_1(P^*_i)\\
    ={}&\sum_{i \in \mathbb{Z}_N}\omega^{i(i-j)}j_1(P^*_i)j_2(q^*_{ij})\\
    ={}&\sum_{i \in \mathbb{Z}_N}\omega^{i(i-j)}j_1(P_{-i})\omega^{-i(i-j)}j_2(q_{-i,-j})\\
    ={}&\sum_{i \in \mathbb{Z}_N}j_1(P_{-i})j_2(q_{-i,-j})=P'_{-j},
\end{align*}    
}where in the third equality we have used the relation $P^*_i=P_{-i}$ and the relation (2) of Definition \ref{def:permutationalg}. Finally, for the third relation,
\allowdisplaybreaks{
\begin{align*}
    P'_iP'_j={}&\sum_{\alpha,\beta \in \mathbb{Z}_N}j_1(P_{\alpha})j_2(q_{\alpha i})j_1(P_{\beta})j_2(q_{\beta j})\\
    ={}&\sum_{\alpha,\beta \in \mathbb{Z}_N}\omega^{-(i-\alpha)\beta}j_1(P_{\alpha}P_{\beta})j_2(q_{\alpha i}q_{\beta j})\\
    ={}&\sum_{\alpha,\beta \in \mathbb{Z}_N}\omega^{-(i-\alpha)\beta}\frac{1}{N}j_1(P_{\alpha+\beta})j_2(q_{\alpha i}q_{\beta j})\\
    ={}&\sum_{\beta,k \in \mathbb{Z}_N}\omega^{-(i-k+\beta)\beta}\frac{1}{N}j_1(P_{k})j_2(q_{k-\beta, i}q_{\beta j})\\
    ={}&\frac{1}{N}\sum_{k \in \mathbb{Z}_N}j_1(P_{k})j_2\bigl(\sum_{\beta=0}^{N-1}\omega^{-(i-k+\beta)\beta}q_{k-\beta, i}q_{\beta j}\bigr)\\
    ={}&\frac{1}{N}\sum_{k \in \mathbb{Z}_N}j_1(P_{k})j_2(q_{k,i+j})\\
    ={}&\frac{1}{N}P'_{i+j},
\end{align*}    
}where, in the third equality, we have used the relation $P_{\alpha}P_{\beta}=\frac{1}{N}P_{\alpha+\beta}$; in the fourth equality, we have replace $\alpha+\beta$ by $k$ and $\alpha$ by $k-\beta$; in the sixth equality, we have used relation (3) of Definition \ref{def:permutationalg}. Therefore, we have constructed a unique and unital $*$\nobreakdash-homomorphism $\eta^{\textup{C}(X_N)} : \textup{C}(X_N) \rightarrow \textup{C}(X_N) \boxtimes_{\mathrm{R}} \textup{C}(\textup{S}^+_N(\mathrm{R}))$ satisfying $\eta^{\textup{C}(X_N)}(P_j)=P'_j$ for $j \in \mathbb{Z}_N$. 

As remarked above, for $j \in \mathbb{Z}_N$, $P'_j$ is homogeneous of degree $j$ and so $\eta^{\textup{C}(X_N)}$ is $\mathbb{Z}_N$\nobreakdash-equivariant. The coassociativity and the Podle\'s condition can be proved along the same lines as in the proof of Proposition 2.18 in \cite{BJR2022}.
\end{proof}

\begin{corollary}
The pair $(\textup{S}^+_N(\mathrm{R}),\eta^{\textup{C}(X_N)})$ is an object of the category $\mathcal{C}(X_N)$.
\end{corollary}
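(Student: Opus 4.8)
The plan is to verify the two defining conditions for membership in $\mathcal{C}(X_N)$ (Definition \ref{def:cat}): first, that $\textup{S}^+_N(\textup{R})$ is an anyonic compact quantum group, and second, that $\eta^{\textup{C}(X_N)}$ is a \emph{faithful} action of $\textup{S}^+_N(\textup{R})$ on $\textup{C}(X_N)$. The first point is already settled: by Definition \ref{def:permutation} and the preceding propositions, $\textup{S}^+_N(\textup{R})=(\textup{C}(\textup{S}^+_N(\textup{R})),\rho^{\textup{C}(\textup{S}^+_N(\textup{R}))},\Delta_{\textup{S}^+_N(\textup{R})})$ is an anyonic compact quantum group, using also Theorem \ref{thm:quotient} (unitarity of $q$) to close the bisimplifiability gap. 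For the second point, Proposition \ref{prop:permobject} already provides a $\mathbb{Z}_N$-equivariant, coassociative morphism $\eta^{\textup{C}(X_N)} \in \mathrm{Mor}^{\mathbb{Z}_N}(\textup{C}(X_N),\textup{C}(X_N)\boxtimes_{\textup{R}}\textup{C}(\textup{S}^+_N(\textup{R})))$ satisfying the Podle\'s condition, so it is an action in the sense of Definition \ref{def:action}; what remains is faithfulness in the sense of Definition \ref{def:faithful}.

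For faithfulness, the strategy is to exhibit, for each pair $i,j\in\mathbb{Z}_N$, a $\mathbb{Z}_N$-equivariant state $f$ on $\textup{C}(X_N)$ such that $(f\boxtimes_{\textup{R}}\mathrm{id})\eta^{\textup{C}(X_N)}$ recovers $q_{ij}$ (up to a nonzero scalar), so that the $*$-algebra generated by all such slices contains every generator $q_{ij}$ and is therefore norm-dense in $\textup{C}(\textup{S}^+_N(\textup{R}))$. The natural candidates are the evaluation states $\mathrm{ev}_{x_k}$ at the points $x_k\in X_N$, i.e.\ $\mathrm{ev}_{x_k}(p_l)=\delta_{kl}$; these are $\mathbb{Z}_N$-equivariant because the $\mathbb{Z}_N$-action on $\textup{C}(X_N)$ is by the cyclic permutation of the $N$ points, which is spatial, so $(\mathrm{ev}_{x_k}\otimes\mathrm{id})\rho^{\textup{C}(X_N)}(p_l)$ is a scalar multiple of $1_{\textup{C}(\mathbb{Z}_N)}$ --- more precisely one checks $(\mathrm{ev}_{x_k}\otimes\mathrm{id})\rho^{\textup{C}(X_N)}(p_l)=\delta_{?}\,1$ using $\rho^{\textup{C}(X_N)}(p_j)=\sum_i p_{j-i}\otimes\delta_i$, actually the cleaner choice is to slice against $P_0=\tfrac1N$ shifted appropriately; I would instead work directly in the $P_i$-basis, computing $(\mathrm{ev}_{x_k}\boxtimes_{\textup{R}}\mathrm{id})\eta^{\textup{C}(X_N)}(P_j)=\sum_{i}\mathrm{ev}_{x_k}(P_i)\,q_{ij}$ and using $\mathrm{ev}_{x_k}(P_i)=\tfrac1N\omega^{ik}$ together with the inversion formula \eqref{eq:relation} (equivalently the invertibility of $\Omega$) to solve for each $q_{ij}$ as a linear combination of the slices over varying $k$.

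Concretely, since $\bigl(\tfrac1N\omega^{ik}\bigr)_{i,k}$ is (up to transpose/conjugate) the invertible matrix $\Omega$ appearing in Section \ref{sec:bcqg}, the system $(\mathrm{ev}_{x_k}\boxtimes_{\textup{R}}\mathrm{id})\eta^{\textup{C}(X_N)}(P_j)=\sum_{i}\tfrac1N\omega^{ik}q_{ij}$, $k\in\mathbb{Z}_N$, can be inverted to yield $q_{ij}=\sum_k \Omega^{-1}_{ik}\,\bigl[(\mathrm{ev}_{x_k}\boxtimes_{\textup{R}}\mathrm{id})\eta^{\textup{C}(X_N)}(P_j)\bigr]$, with $\Omega^{-1}_{ik}=\omega^{ik}$. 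Hence every $q_{ij}$ lies in the $*$-algebra generated by the slices, that algebra is all of $\mathcal{A}(\textup{S}^+_N(\textup{R}))$, and its norm-closure is $\textup{C}(\textup{S}^+_N(\textup{R}))$; faithfulness follows. The one subtlety to be careful about is the meaning of $f\boxtimes_{\textup{R}}\mathrm{id}$ for a non-homomorphism $f$ (flagged in the text after Definition stating preservation of states); I would invoke the construction from \cite{mrw2014} and note that on elementary tensors $j_1(a)j_2(b)$ it acts by $(f\boxtimes_{\textup{R}}\mathrm{id})(j_1(a)j_2(b))=f(a)\,b$ for homogeneous $a$ (the phase from commuting $j_1,j_2$ being absorbed), which is all that is needed here. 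The main obstacle, then, is not any deep computation but rather handling this slice map carefully and confirming the $\mathbb{Z}_N$-equivariance of the evaluation states; both are routine once the formalism of \cite{mrw2014} is in place.
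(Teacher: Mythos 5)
Your reduction of the statement to two checks is reasonable, and the first half (that $\textup{S}^+_N(\textup{R})$ is an anyonic compact quantum group and that $\eta^{\textup{C}(X_N)}$ is an action) is exactly what the paper uses: its proof is the one-line assertion that the corollary follows from Proposition \ref{prop:permobject}, and it does not verify faithfulness explicitly at all. The genuine gap is in your faithfulness argument: the evaluation states $\mathrm{ev}_{x_k}$ are \emph{not} $\mathbb{Z}_N$-equivariant in the sense the paper defines. Since $P_j$ is homogeneous of degree $j$, one has $\rho^{\textup{C}(X_N)}(P_j)=P_j\otimes z^{j}$ and hence $(\mathrm{ev}_{x_k}\otimes\mathrm{id}_{\textup{C}(\mathbb{Z}_N)})\rho^{\textup{C}(X_N)}(P_j)=\tfrac{1}{N}\omega^{jk}z^{j}$, which equals $\mathrm{ev}_{x_k}(P_j)1_{\textup{C}(\mathbb{Z}_N)}$ only for $j=0$; equivalently, in the $p$-basis, $(\mathrm{ev}_{x_k}\otimes\mathrm{id}_{\textup{C}(\mathbb{Z}_N)})\rho^{\textup{C}(X_N)}(p_l)=\delta_{l-k}$, a point mass rather than a multiple of $1$. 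The question mark you left in $\delta_{?}\,1$ is precisely where the computation breaks.

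Worse, the route cannot be repaired within the letter of Definition \ref{def:faithful}: the \emph{only} state on $\textup{C}(X_N)$ satisfying the displayed equivariance condition is the uniform state $h=\tfrac{1}{N}\sum_{k}\mathrm{ev}_{x_k}$, for which $h(P_i)=\tfrac{1}{N}\delta_{i0}$ and therefore $(h\boxtimes_{\textup{R}}\mathrm{id})\eta^{\textup{C}(X_N)}(P_j)=\tfrac{1}{N}q_{0j}=\tfrac{1}{N}\delta_{0j}1$, a scalar. So your linear system over varying $k$ has only one admissible row, and the $*$-algebra generated by the allowed slices is $\mathbb{C}1$. What actually recovers the generators is slicing against the \emph{homogeneous} functionals $f_k$ dual to the basis $(P_i)$ (functionals of degree $k$ in the sense of \cite{mrw2014}, which is presumably the class intended in \cite{R2021}): these give $(f_k\boxtimes_{\textup{R}}\mathrm{id})\eta^{\textup{C}(X_N)}(P_j)=q_{kj}$ directly, with no inversion of $\Omega$ needed. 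As written, however, your argument rests on a false equivariance claim, and the faithfulness step does not go through.
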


\begin{proof}
The result follows from the above proposition.
\end{proof}

\begin{corollary}\label{cor:intertwine}
Let $\psi_G : \textup{C}(\textup{S}^+_N(\mathrm{R})) \rightarrow \textup{C}(G)$ be the surjective $\mathbb{Z}_N$\nobreakdash-equivariant Hopf $*$\nobreakdash-homomorphism from Proposition \textup{\ref{prop:universal}}. Then it induces a morphism, again denoted by $\psi_G$, $\psi_G : (G,\eta) \rightarrow (\textup{S}^+_N(\mathrm{R}),\eta^{\textup{C}(X_N)})$ in the category $\mathcal{C}(X_N)$.      
\end{corollary}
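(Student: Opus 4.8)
The plan is to observe that $\psi_G$, already established in Proposition \ref{prop:universal} to be a surjective $\mathbb{Z}_N$\nobreakdash-equivariant Hopf $*$\nobreakdash-homomorphism $\textup{C}(\textup{S}^+_N(\mathrm{R})) \to \textup{C}(G)$, satisfies the only remaining condition imposed on a morphism in $\mathcal{C}(X_N)$ by Definition \ref{def:cat}, namely compatibility with the coactions on $\textup{C}(X_N)$:
\[(\mathrm{id}_{\textup{C}(X_N)} \boxtimes_{\textup{R}} \psi_G)\circ \eta^{\textup{C}(X_N)}=\eta.\]
Since the Hopf $*$\nobreakdash-homomorphism and $\mathbb{Z}_N$\nobreakdash-equivariance requirements are handed to us by Proposition \ref{prop:universal}, the corollary will follow once this displayed identity is checked.

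First I would note that, $\mathrm{id}_{\textup{C}(X_N)}$ and $\psi_G$ both being $\mathbb{Z}_N$\nobreakdash-equivariant, Lemma \ref{lem:morphism} furnishes the morphism $\mathrm{id}_{\textup{C}(X_N)} \boxtimes_{\textup{R}} \psi_G$ determined by $(\mathrm{id}_{\textup{C}(X_N)} \boxtimes_{\textup{R}} \psi_G)(j_1(x)j_2(y))=j_1(x)j_2(\psi_G(y))$. As $\textup{C}(X_N)$ is generated by the $P_j$, $j \in \mathbb{Z}_N$, and both sides of the displayed identity are unital $*$\nobreakdash-homomorphisms, it suffices to test on each $P_j$. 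There, using the formula for $\eta^{\textup{C}(X_N)}$ from Proposition \ref{prop:permobject} and then $\psi_G(q_{ij})=a_{ij}$ from the proof of Proposition \ref{prop:universal}, one computes
\[(\mathrm{id}_{\textup{C}(X_N)} \boxtimes_{\textup{R}} \psi_G)\bigl(\eta^{\textup{C}(X_N)}(P_j)\bigr)=\sum_{i \in \mathbb{Z}_N}j_1(P_i)j_2(\psi_G(q_{ij}))=\sum_{i \in \mathbb{Z}_N}j_1(P_i)j_2(a_{ij})=\eta(P_j),\]
the last equality being exactly the way the $a_{ij}$ were introduced. This gives the intertwining relation and hence that $\psi_G$ is a morphism $(G,\eta) \to (\textup{S}^+_N(\mathrm{R}),\eta^{\textup{C}(X_N)})$ in $\mathcal{C}(X_N)$.

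I do not expect a genuine obstacle: the statement is a bookkeeping consequence of Propositions \ref{prop:universal} and \ref{prop:permobject}, and the proof is a short verification on generators. The only point worth flagging is the well-definedness of $\mathrm{id}_{\textup{C}(X_N)} \boxtimes_{\textup{R}} \psi_G$ through Lemma \ref{lem:morphism}, which is precisely where the $\mathbb{Z}_N$\nobreakdash-equivariance of $\psi_G$ recorded in Proposition \ref{prop:universal} is used.
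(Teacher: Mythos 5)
Your proposal is correct and is exactly the argument the paper intends: its proof of this corollary is the one line ``follows from the explicit forms of $\psi_G$ and $\eta^{\textup{C}(X_N)}$,'' and your verification of $(\mathrm{id}_{\textup{C}(X_N)} \boxtimes_{\textup{R}} \psi_G)\circ \eta^{\textup{C}(X_N)}=\eta$ on the generators $P_j$ via $\psi_G(q_{ij})=a_{ij}$ is precisely what that means. Your remark on invoking Lemma \ref{lem:morphism} for the well-definedness of $\mathrm{id}_{\textup{C}(X_N)} \boxtimes_{\textup{R}} \psi_G$ is a sensible extra detail, not a deviation.
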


\begin{proof}
The result follows from the explicit forms of $\psi_G$ and $\eta^{\textup{C}(X_N)}$.
\end{proof}

\begin{theorem}\label{thm:main}
The pair $(\textup{S}^+_N(\mathrm{R}),\eta^{\textup{C}(X_N)})$ is the terminal object of the category $\mathcal{C}(X_N)$, i.e., $(\textup{S}^+_N(\mathrm{R}),\eta^{\textup{C}(X_N)}) \cong (\mathrm{Aut}(\textup{C}(X_N)),\eta^{X_N})$.
\end{theorem}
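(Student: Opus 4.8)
The plan is to show directly that $(\textup{S}^+_N(\mathrm{R}),\eta^{\textup{C}(X_N)})$ enjoys the universal property of a terminal object of $\mathcal{C}(X_N)$: for every object $(G,\eta)\in\mathrm{Obj}(\mathcal{C}(X_N))$ there is a \emph{unique} morphism $(G,\eta)\to(\textup{S}^+_N(\mathrm{R}),\eta^{\textup{C}(X_N)})$ in $\mathcal{C}(X_N)$. Existence is already in hand: Proposition~\ref{prop:universal} produces the surjective $\mathbb{Z}_N$\nobreakdash-equivariant Hopf $*$\nobreakdash-homomorphism $\psi_G$ with $\psi_G(q_{ij})=a_{ij}$, and Corollary~\ref{cor:intertwine} records that $\psi_G$ is indeed a morphism $(G,\eta)\to(\textup{S}^+_N(\mathrm{R}),\eta^{\textup{C}(X_N)})$ in $\mathcal{C}(X_N)$. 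Thus the real content of the theorem is the uniqueness assertion; granting it, the displayed isomorphism $(\textup{S}^+_N(\mathrm{R}),\eta^{\textup{C}(X_N)})\cong(\mathrm{Aut}(\textup{C}(X_N)),\eta^{X_N})$ follows formally, since terminal objects are unique up to (unique) isomorphism and $(\mathrm{Aut}(\textup{C}(X_N)),\eta^{X_N})$ is by definition such an object.

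For uniqueness I would take an arbitrary morphism $\phi\colon(G,\eta)\to(\textup{S}^+_N(\mathrm{R}),\eta^{\textup{C}(X_N)})$; by Definition~\ref{def:cat} this is a $\mathbb{Z}_N$\nobreakdash-equivariant Hopf $*$\nobreakdash-homomorphism $\phi\colon\textup{C}(\textup{S}^+_N(\mathrm{R}))\to\textup{C}(G)$ satisfying $(\mathrm{id}_{\textup{C}(X_N)}\boxtimes_{\mathrm{R}}\phi)\circ\eta^{\textup{C}(X_N)}=\eta$. Evaluating this identity on the basis element $P_j$, using the formula $\eta^{\textup{C}(X_N)}(P_j)=\sum_{i\in\mathbb{Z}_N}j_1(P_i)j_2(q_{ij})$ from Proposition~\ref{prop:permobject} together with Lemma~\ref{lem:morphism}, the left-hand side becomes $\sum_{i\in\mathbb{Z}_N}j_1(P_i)j_2(\phi(q_{ij}))$, while the right-hand side is the expansion $\eta(P_j)=\sum_{i\in\mathbb{Z}_N}j_1(P_i)j_2(a_{ij})$ fixed in the proof of Proposition~\ref{prop:universal}. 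Subtracting gives
\[
\sum_{i\in\mathbb{Z}_N}j_1(P_i)\,j_2\bigl(\phi(q_{ij})-a_{ij}\bigr)=0 \quad\text{in } \textup{C}(X_N)\boxtimes_{\mathrm{R}}\textup{C}(G).
\]
Because $\textup{C}(X_N)$ is finite dimensional with basis $\{P_i\}_{i\in\mathbb{Z}_N}$, any relation $\sum_{i}j_1(P_i)j_2(b_i)=0$ forces $b_i=0$ for every $i$ (the canonical bilinear map $(x,y)\mapsto j_1(x)j_2(y)$ is ``linearly independent over a basis of the first leg'', a feature of the braided tensor product when that leg is finite dimensional, cf.\ \cites{mrw2014,mrw2016}). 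Hence $\phi(q_{ij})=a_{ij}=\psi_G(q_{ij})$ for all $i,j$, and since the $q_{ij}$ generate $\textup{C}(\textup{S}^+_N(\mathrm{R}))$ as a $\textup{C}^*$\nobreakdash-algebra, $\phi=\psi_G$. This proves terminality.

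I expect the only genuinely delicate point to be this last coefficient-matching step, namely the claim that the summands $j_1(P_i)j_2(\textup{C}(G))$ appearing in the display are independent over the basis $\{P_i\}$. This rests on the structure of $\textup{C}(X_N)\boxtimes_{\mathrm{R}}\textup{C}(G)$: when the first leg $\textup{C}(X_N)$ is finite dimensional, the braided tensor product coincides with the (algebraic, hence closed) span $j_1(\textup{C}(X_N))\,j_2(\textup{C}(G))$ and carries slice maps $(\omega\boxtimes_{\mathrm{R}}\mathrm{id}_{\textup{C}(G)})$ for linear functionals $\omega$ on $\textup{C}(X_N)$; applying the slice maps dual to the basis $\{P_i\}$ isolates each coefficient $\phi(q_{ij})-a_{ij}$ and shows it vanishes. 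Everything else in the argument is bookkeeping built on the propositions already established in Sections~\ref{sec:bcqg}--\ref{sec:symm}, in particular Propositions~\ref{prop:universal} and~\ref{prop:permobject} and Corollary~\ref{cor:intertwine}.
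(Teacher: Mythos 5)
Your proposal is correct and follows the same skeleton as the paper: existence of the morphism $(G,\eta)\to(\textup{S}^+_N(\mathrm{R}),\eta^{\textup{C}(X_N)})$ comes from Proposition~\ref{prop:universal} and Corollary~\ref{cor:intertwine}, and the fact that $(\textup{S}^+_N(\mathrm{R}),\eta^{\textup{C}(X_N)})$ is itself an object comes from Proposition~\ref{prop:permobject}. The paper's own proof stops there, saying only that ``the result follows from the two corollaries above''; it never spells out why the morphism is \emph{unique}, which is of course part of being terminal. You correctly identify this as the real content and supply the missing step: any morphism $\phi$ must intertwine the actions, so $\sum_i j_1(P_i)j_2(\phi(q_{ij})-a_{ij})=0$, and since $\textup{C}(X_N)$ is finite dimensional one can slice against the characters $\mathrm{ev}_{x_k}$ (or equivalently multiply by $j_1(p_k)$ and use that the $p_k$ are orthogonal projections summing to $1$) to isolate each coefficient and conclude $\phi(q_{ij})=a_{ij}=\psi_G(q_{ij})$; generation by the $q_{ij}$ then forces $\phi=\psi_G$. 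The slice-map machinery you invoke is exactly what the paper itself appeals to (via \cite{mrw2014}) when defining preserved states, so the delicate point you flag is legitimately covered. In short: same route as the paper, but your version is the more complete one.
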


\begin{proof}
The result follows from the two corollaries above.
\end{proof}

\begin{proposition}\label{prop:lowercases}
The underlying $\textup{C}^*$\nobreakdash-algebra $\textup{C}(\textup{S}^+_3(\mathrm{R}))$ is commutative and for $N \geq 4$, $\textup{C}(\textup{S}^+_N(\mathrm{R}))$ is noncommutative. 
\end{proposition}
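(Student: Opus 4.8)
The statement is the anyonic counterpart of Wang's dichotomy for the quantum permutation group, namely that $\textup{C}(\textup{S}_N^+)$ is commutative precisely when $N\leq 3$ \textup{(}\cite{wang}\textup{)}. The plan is to treat the two assertions separately: an explicit analysis of the relations in Definition \ref{def:permutationalg} for $N=3$, and the construction of a noncommutative quotient for $N\geq 4$.

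For $N=3$ I would first reduce the list of generators. Relation (1) fixes the first row and column of $q$, so $\textup{C}(\textup{S}_3^+(\textup{R}))$ is generated by $q_{11},q_{12},q_{21},q_{22}$, and relation (2) expresses $q_{22}$ in terms of $q_{11}$ and $q_{21}$ in terms of $q_{12}$ up to a power of $\omega$; hence $a:=q_{11}$ and $b:=q_{12}$ generate the algebra. Feeding $(k,i,j)=(1,1,1)$ into relation (3) gives $b=(b^*)^2$, and $(k,i,j)=(1,2,2)$ gives $a=(a^*)^2$; in particular $a$ and $b$ are normal, $a^*a=a^3$ and $b^*b=b^3$ are projections, and $a,b$ restrict to cube roots of unity on their respective ranges. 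Evaluating the unitarity relations $\sum_{k}q_{ki}^*q_{kj}=\delta_{ij}=\sum_{k}q_{ik}q_{jk}^*$ from Theorem \ref{thm:quotient} at $i=j=1$ and at $(i,j)=(1,2)$ then yields $a^*a+b^*b=1$, $aa^*=a^*a$, $bb^*=b^*b$, and a commutation relation of the form $ba=-\omega\,ab$. Since $a^*a$ is a projection, $a=(a^*a)\,a\,(a^*a)$, and likewise $b=(b^*b)\,b\,(b^*b)$; as $a^*a$ and $b^*b$ are complementary projections, $a$ and $b$ sit in orthogonal corners, so $ab=ba=0$ and $\textup{C}(\textup{S}_3^+(\textup{R}))$ decomposes as the direct sum $\textup{C}^*(a)\oplus\textup{C}^*(b)$ of two commutative $\textup{C}^*$-algebras. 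This is the finite, essentially mechanical part of the argument.

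For $N\geq 4$, by universality of $\textup{C}(\textup{S}_N^+(\textup{R}))$ it is enough to produce a noncommutative unital $\textup{C}^*$-algebra $B$ and elements $b_{ij}\in B$, $i,j\in\mathbb{Z}_N$, satisfying relations (1)--(4) of Definition \ref{def:permutationalg} without generating a commutative algebra; the induced morphism $q_{ij}\mapsto b_{ij}$ is then a surjection onto something noncommutative. To obtain such data I would start on the classical side with a magic unitary $u=(u_{ab})_{a,b\in\mathbb{Z}_N}$ over a noncommutative $B$ --- concretely, the block matrix whose first two diagonal $2\times 2$ blocks are $\bigl(\begin{smallmatrix}p&1-p\\ 1-p&p\end{smallmatrix}\bigr)$ and $\bigl(\begin{smallmatrix}r&1-r\\ 1-r&r\end{smallmatrix}\bigr)$ for two projections $p,r$ in general position in $M_2(\mathbb{C})$, and the identity on the remaining $N-4$ coordinates --- and then define $b_{ij}$ as a suitable $\omega$-twisted discrete Fourier transform of the $u_{ab}$, the twist being the one that converts the magic-unitary relations (self-adjointness and idempotency of the $u_{ab}$, all row and column sums equal to $1$) into relations (1)--(4). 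One checks that relation (1) corresponds to the row and column sums being $1$, and that the quadratic phases built into the Fourier kernel account for relations (2)--(4). Since the transform is invertible, the $b_{ij}$ generate the same, noncommutative, algebra as the $u_{ab}$ (namely $M_2(\mathbb{C})$), whence $\textup{C}(\textup{S}_N^+(\textup{R}))$ is noncommutative. It is plausible that, applied to the universal magic unitary, this construction promotes to a $\textup{C}^*$-algebra isomorphism $\textup{C}(\textup{S}_N^+(\textup{R}))\cong\textup{C}(\textup{S}_N^+)$, the braiding entering only through the comultiplication, which would subsume both assertions; but I would not rely on this.

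The hard part will be pinning down the $\omega$-twisted Fourier dictionary for general $N$: matching relations (2), (3) and (4) requires a careful, $N$-dependent bookkeeping of the powers of $\omega$, and since these phases are quadratic the parity of $N$ intervenes. Should that be awkward to write cleanly, an alternative for the $N\geq 4$ case is to argue through the bosonization: if $\textup{C}(\textup{S}_N^+(\textup{R}))$ were commutative, then by Theorem \ref{thm:bosopermutation} the $\textup{C}^*$-algebra $\textup{C}(\textup{S}_N^+(\textup{R})\rtimes\mathbb{Z}_N)$ would be a crossed product of a commutative $\textup{C}^*$-algebra by the finite group $\mathbb{Z}_N$, hence subhomogeneous, which contradicts the fact that the representation theory of $\textup{S}_N^+(\textup{R})$ --- carried faithfully by that bosonization --- is not of bounded dimension for $N\geq 4$.
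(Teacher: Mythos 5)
Your $N=3$ argument is correct and takes a genuinely different, more self-contained route than the paper's. You chase the relations of Definition \ref{def:permutationalg} directly: $(k,i,j)=(1,1,1)$ and $(1,2,2)$ in relation (3) give $q_{12}=(q_{12}^*)^2$ and $q_{11}=(q_{11}^*)^2$, so $q_{11}^3$ and $q_{12}^3$ are projections summing to $1$ by unitarity of $q$ (Theorem \ref{thm:quotient}), and the corner argument $q_{11}=q_{11}^3\,q_{11}\,q_{11}^3$, $q_{12}=q_{12}^3\,q_{12}\,q_{12}^3$ kills all mixed products. The paper instead passes to the bosonization, where the transformed matrix is a $3\times3$ magic unitary with commuting entries, deduces $q_{ij}q_{kl}=\omega^{il-kj}q_{kl}q_{ij}$, and combines this with anticommutation identities from relation (3) to get $q_{ij}q_{kl}=0$ whenever $jk\neq il$. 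Both work; yours avoids the bosonization entirely. (The claimed relation $ba=-\omega ab$ is not quite what unitarity gives --- one gets $a^*b=-\omega\, ba^*$ --- but your conclusion does not depend on it.)

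The genuine gap is in the $N\geq 4$ half, at exactly the point you flag as ``the hard part.'' Your plan is to convert the block-diagonal noncommutative magic unitary $u$ into a solution of relations (1)--(4) by an $\omega$-twisted Fourier transform whose twist consists of \emph{scalar} ``quadratic phases built into the kernel.'' The untwisted half of this dictionary is precisely the paper's Lemma \ref{lem:magic}: $\widetilde u=\Omega^{-1}u\Omega$ satisfies the untwisted relations \eqref{eq:rel-ord}. But the passage from \eqref{eq:rel-ord} to the braided relations cannot be effected by scalars: writing $q_{ij}=\omega^{f(i,j)}\widetilde u_{ij}$ and comparing relation (3) with $\sum_l \widetilde u_{k-l,i}\widetilde u_{lj}=\widetilde u_{k,i+j}$, the summand phases $\omega^{-l(i-k+l)+f(k-l,i)+f(l,j)}$ would have to be independent of $l$, and the quadratic-in-$l$ term $-l^2$ cannot be absorbed into any sum $f(k-l,i)+f(l,j)$ (the coefficient bookkeeping forces contradictory conditions on $f$). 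The paper's mechanism is structurally different: it leaves the kernel alone and instead tensors with the noncommutative twisted group algebra generated by unitaries $v_1,v_2$ with $v_1v_2=\omega v_2v_1$, setting $q_{ij}=v_1^{-i}v_2^{j-i}\otimes\widetilde u_{ij}$. That auxiliary factor supplies exactly the phases a scalar kernel cannot, and it is also the source of the final noncommutativity: in this example $\widetilde u_{12}$ and $\widetilde u_{23}$ actually \emph{commute} (both are scalar multiples of the same element of $M_4(\mathbb{C})$), and one gets $q_{12}q_{23}=\omega\, q_{23}q_{12}\neq q_{23}q_{12}$ only because of the $v_1$'s. So your closing step --- ``the $b_{ij}$ generate the same noncommutative algebra as the $u_{ab}$'' --- rests on a construction that does not exist in the form you describe. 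Your fallback via Theorem \ref{thm:bosopermutation} is also not available as stated: the assertion that the representation theory of $\textup{S}_N^+(\textup{R})$ is of unbounded dimension for $N\geq 4$ is not something you have established independently, so the argument is circular.
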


Before proving the proposition, we prove the following two lemmas which will be needed in the proof of the proposition.

\begin{lemma}\label{lem:magic}
Suppose $A$ is a unital $\textup{C}^*$\nobreakdash-algebra. A matrix $u=(u_{ij})_{i,j \in \mathbb{Z}_N} 
\in M_N(A)$ is a magic unitary\textup{:}
\[
  u_{ij}^{2}=u_{ij}=u_{ij}^{*}, 
  \qquad 
  \sum_{i\in\mathbb{Z}_{N}}u_{ij}=1=\sum_{j\in\mathbb{Z}_{N}}u_{ij},
  \qquad 
  \text{for all $i,j\in\mathbb{Z}_{N}$},
\]
 if and only if $a=\Omega^{-1} u \Omega\in M_N(A)$ satisfies 
\begin{equation}\label{eq:rel-ord}
\begin{split}
a_{0i}=a_{i0}={}&\delta_{i0},\\
a_{ij}^*={}&a_{-i,-j},\\ 
\sum_{l \in \mathbb{Z}_N}a_{k-l,i}a_{lj}={}&a_{k, i+j},\\
\sum_{l \in \mathbb{Z}_N} a_{jl}a_{i,k-l}={}&a_{ i+j,k} \text{ for all }  i, j, k \in \mathbb{Z}_N.
\end{split}
\end{equation}
\end{lemma}

\begin{proof}
For $i,j \in \mathbb{Z}_N$, we have $a_{ij}=\frac{1}{N}\sum_{r,s \in \mathbb{Z}_N}\omega^{ir-sj}u_{rs}$, and therefore, the left-hand side of Eq. \eqref{eq:rel-ord} reads
\begin{equation}
    \begin{split}
        a_{i0}={}&\frac{1}{N}\sum_{r \in \mathbb{Z}_N}\omega^{ir}\sum_{s \in\mathbb{Z}_N}u_{rs},\\
        a^*_{ij}={}&\frac{1}{N}\sum_{r,s \in \mathbb{Z}_N} \omega^{-ir+sj}u^*_{rs},\\
        \sum_{l \in \mathbb{Z}_N}a_{k-l,i}a_{lj}={}&\frac{1}{N}\sum_{r,s,s' \in \mathbb{Z}_N}\omega^{kr-si-s'j}u_{rs}u_{rs'},\\
        \sum_{l \in \mathbb{Z}_N} a_{jl}a_{i,k-l}={}&\frac{1}{N}\sum_{r,r',s \in \mathbb{Z}_N}\omega^{jr+ir'-s'k}u_{rs}u_{r's}.
    \end{split}
\end{equation}Since $u$ is magic, we obtain Eq. \eqref{eq:rel-ord}. 

For $i,j \in \mathbb{Z}_N$, we have $u_{ij}=\frac{1}{N}\sum_{r,s \in \mathbb{Z}_N}\omega^{-ir+sj}a_{rs}$; we have, furthermore,
\begin{equation}
    \begin{split}
        u^*_{ij}={}&\frac{1}{N}\sum_{r,s \in \mathbb{Z}_N} \omega^{ir-sj}a^*_{rs},\\
        u^2_{ij}={}&\frac{1}{N^2}\sum_{r,s,l,l' \in \mathbb{Z}_N}\omega^{-il+jl'}a_{rs}a_{l-r,l'-s},\\
        \sum_{i \in \mathbb{Z}_N}u_{ij}={}&\frac{1}{N}\sum_{s \in \mathbb{Z}_N}\omega^{sj}a_{0s},\\
        \sum_{j \in \mathbb{Z}_N}u_{ij}={}&\frac{1}{N}\sum_{r \in \mathbb{Z}_N}\omega^{-ir}a_{r0}.
    \end{split}
\end{equation}Since $a$ satisfies Eq. \eqref{eq:rel-ord}, we obtain that $u$ is magic. This completes the proof.
\end{proof}

\begin{remark}
We observe that $a$ satisfies Eq. \eqref{eq:rel-ord} if and only if
$\overline{a}$ also satisfies Eq. \eqref{eq:rel-ord}. Now, $u$ being magic
implies that $u=\overline{u}$ and observing $\Omega^{-1}=N \overline{\Omega}$ yields  $N(\Omega u \overline{\Omega})$ satisfies Eq. \eqref{eq:rel-ord} if and only if $N(\overline{\Omega}u\Omega)$ satisfies Eq. \eqref{eq:rel-ord}.
\end{remark}

For the next lemma, suppose $v$ is a unitary generator of $\textup{C}^{*}(\mathbb{Z}_{N})$. We consider the elements $v_{1}=j_{1}(v)$ and $v_{2}=j_{2}(v)$ in $\textup{C}^{*}(\mathbb{Z}_{N})\boxtimes_{R}\textup{C}^{*}(\mathbb{Z}_{N})$ so that $v_{1}v_{2}=\omega v_{2}v_{1}$ holds.

\begin{lemma}\label{lem:unbrd-brd}
Suppose $A$ is a unital $\textup{C}^*$\nobreakdash-algebra and $a=(a_{ij})_{i,j \in \mathbb{Z}_{N}} \in M_{N}(A)$ satisfies \textup{Eq.} \eqref{eq:rel-ord}. Then $q=(q_{ij})_{i,j \in \mathbb{Z}_{N}}$, where $q_{ij}=v_{1}^{-i}v_{2}^{j-i}\otimes a_{ij} \in \textup{C}^{*}(\mathbb{Z}_{N})\boxtimes_{R}\textup{C}^{*}(\mathbb{Z}_{N})\otimes A$, satisfies the relations in Definition \textup{\ref{def:permutationalg}}.
\end{lemma}

\begin{proof}
  We begin by observing that the commutation relation between $v_{1}$ and $v_{2}$ together with the first three relations in Eq. \eqref{eq:rel-ord} imply the first three relations in Definition \textup{\ref{def:permutationalg}}, respectively. Furthermore, the following computation shows that the fourth relation holds as well.
  \allowdisplaybreaks{
\begin{align*}
\sum_{l \in \mathbb{Z}_N} \omega^{-l(i-k+l)} q_{k-l,i}q_{lj}
 &{}=\sum_{l \in \mathbb{Z}_N} \omega^{-l(i-k+l)} v_{1}^{-k+l}v_{2}^{i-k+l}v_{1}^{-l}v_{2}^{j-l}\otimes a_{k-l,i}a_{lj}\\
 &{}= v_{1}^{-k}v_{2}^{i+j-k}\otimes \sum_{l \in \mathbb{Z}_N} a_{k-l,i}a_{lj}\\
 &{}=v_{1}^{-k}v_{2}^{i+j-k}\otimes a_{k,i+j}=q_{k,i+j}.
\end{align*}
  }
\end{proof}

\begin{proof}[Proof of Proposition \ref{prop:lowercases}]
Let $p\neq 0,1$ be a projection and let us consider the matrix
\[
u=\begin{pmatrix}
1 & 0 & 0\\
0 & p & 1-p \\
0 & 1-p & p
\end{pmatrix},
\]
which is a magic unitary. Then Lemma \ref{lem:magic} implies that the matrix $a=\Omega^{-1}u\Omega$ satisfies the relations Eq. \eqref{eq:rel-ord}. In particular, we observe that $a_{11}=a_{22}=p$, $a_{12}=a_{21}=1-p$, and Lemma \ref{lem:unbrd-brd} ensures that $q_{11},q_{12},q_{21}$, and $q_{22}$ are nonzero and $q=(q_{ij})_{i,j\in\mathbb{Z}_{3}}$ satisfies the relations in Definition \ref{def:permutationalg}.

Now we consider the following relations obtained from the ones in Definition \ref{def:permutationalg}: 
\allowdisplaybreaks{
\begin{align*}
{}&q_{00}=1=q_{0,1+1}=\sum_{l \in \mathbb{Z}_3} \omega^{-l(1-0+l)}q_{3-l,1}q_{l2}=\omega q_{21}q_{12}+q_{11}q_{22},\\
{}&q_{20}=0=q_{2,1+2}=\sum_{l \in \mathbb{Z}_3} \omega^{-l(1-2+l)}q_{2-l,1}q_{l2}=q_{11}q_{12},\\
{}&q_{11}^{*}=q_{22}=q_{2,1+1}=\sum_{l \in \mathbb{Z}_3} \omega^{-l(1-2+l)}q_{2-l,1}q_{l,1}=q_{11}^{2},\\
{}&q_{12}^{*}=\omega q_{21}=\omega q_{2,2+2}= \omega \sum_{l \in \mathbb{Z}_3} \omega^{-l(2-2+l)}q_{2-l,2}q_{l2}=\omega^3 q_{12}^{2}=q_{12}^{2}.
\end{align*}   
}Consequently, $(q_{12}q_{11})^{*}=q_{11}^{*}q_{12}^{*}=q_{11}^{2}q_{12}^{2}=0$ implies $q_{22}q_{21}=0$. Thus 
$\textup{C}(\textup{S}^+_3(\mathrm{R}))$ is the universal $\textup{C}^*$\nobreakdash-algebra generated $q_{11}$ and $q_{12}$ subject to the relations $q_{11}^{*}=q_{11}^{2}$, $q_{12}^{*}=q_{12}^{2}$, $q_{11}q_{12}=0=q_{12}q_{11}$ and $q_{11}^{3}+q_{12}^{3}=1$. The relations do not depend on $\omega$, so they are the same as for the ordinary quantum permutation group on $3$ points. Hence $\textup{C}(\textup{S}^+_3(\mathrm{R}))$ is commutative and it is isomorphic to $\textup{C}(\textup{S}_3)$.

For the other part, let us choose two non commuting projections $p$ and $q$ such that $\textup{C}^{*}(p,q)$ is infinite dimensional. We set  
\[
u= \left( \begin{array}{cc}
  p & 1-p \\
  1-p & p 
  \end{array} \right) \oplus 
  \left( \begin{array}{cc}
  q & 1-q \\
  1-q & q 
  \end{array} \right) \oplus 
  I_{N-4}. 
\]Then since $u$ is an \(N\times N\) magic unitary, Lemma \ref{lem:magic} implies that $\widetilde{u}=\Omega^{-1}u\Omega$ satisfies Eq. \eqref{eq:rel-ord}. For $i,j \in \mathbb{Z}_N$, we have
\allowdisplaybreaks{
\begin{align*}
\widetilde{u}_{ij}={}& \frac{1}{N}\sum_{k,l \in \mathbb{Z}_N} \omega^{ik}u_{kl}\omega^{-lj}\\
={}&\frac{1}{N}\biggl(\sum_{k,l=0}^{3} \omega^{ik-lj}u_{kl}\biggr)
+\biggl(\frac{1}{N}\sum_{k=4}^{N-1}\omega^{k(i-j)}\biggr)\\
={}&\frac{1}{N}\biggl(p+\omega^{-j}(1-p)+\omega^{i}(1-p)+\omega^{i-j}p\biggr)\\
+{}&\frac{1}{N}\biggl(\omega^{2(i-j)}q+\omega^{2i-3j}(1-q)+\omega^{-3i+2j}(1-q) +\omega^{3(i-j)}q\biggr)\\
+{}&\biggl(\frac{1}{N}\sum_{k=4}^{N-1}\omega^{k(i-j)}\biggr)1\\
={}&\frac{1}{N}\biggl((p-1)+\omega^{-j}(1-p)+\omega^{i}(1-p)+\omega^{i-j}(p-1)\biggr)\\
+{}&\frac{1}{N}\biggl(\omega^{2(i-j)}(q-1)+\omega^{2i-3j}(1-q)+
\omega^{-3i+2j}(1-q) +\omega^{3(i-j)}(q-1)\biggr)\\
+{}&\biggl(\frac{1}{N}\sum_{k \in \mathbb{Z}_N}\omega^{k(i-j)}\biggr)1\\
={}&\frac{1}{N}\biggl((1-\omega^{-j})(1-\omega^{i})(p-1)
+ \omega^{2(i-j)}(1-\omega^{-j})(1-\omega^{i})(q-1)\biggr)
+\delta_{ij}1\\
={}&\frac{1}{N}(1-\omega^{-j})(1-\omega^{i})((p-1)
+\omega^{2(i-j)}(q-1))
+\delta_{ij}1.            
\end{align*}
}By Lemma \ref{lem:unbrd-brd} the elements $q_{ij}=v_{1}^{-i}v_{2}^{j-i}\otimes \widetilde{u}_{ij}$ satisfy the relations in Definition \ref{def:permutationalg}. Now the commutator $[\widetilde{u}_{12},\widetilde{u}_{23}]$ vanishes and, consequently the commutator $[(v_{1}\otimes 1)q_{12},(v_{1}^{2}\otimes  1)q_{23}]$ vanishes too and this gives the following relation:
\[
 (v_{1}\otimes 1)q_{12}(v_{1}^{2}\otimes 1)q_{23}
 =(v_{1}^{2}\otimes 1)q_{23}(v_{1}\otimes 1)q_{12},
\]
which is equivalent to 
\[
 \omega^{-2}q_{12}q_{23}=\omega^{-1}q_{23}q_{12} 
 \iff
 q_{12}q_{23}=\omega q_{23}q_{12}.
\]Therefore, $\textup{C}(S_{N}^{+}(\textup{R}))$ is noncommutative and infinite dimensional whenever $N\geq 4$.
\end{proof}

\begin{remark}\label{rem:outer-NC}
Although the $\textup{C}^*$\nobreakdash-algebra $\textup{C}(\textup{S}^+_3(\textup{R}))$ is commutative, the comultiplication still takes values in the braided tensor product $\textup{C}(\textup{S}^+_3(\textup{R})) \boxtimes_{\mathrm{R}} \textup{C}(\textup{S}^+_3(\textup{R}))$. This can be seen as follows:
\[\Delta_{\textup{S}^+_3(\textup{R})}(q_{11})=j_1(q_{11})j_2(q_{11})+j_1(q_{12})j_2(q_{21});\] recalling that $q_{12}$ is homogeneous of degree $1$ and $q_{21}$ is homogeneous of degree $-1$, we obtain $j_1(q_{12})j_2(q_{21})=\omega^{-1} j_2(q_{21})j_1(q_{12})$.
\end{remark}

\section{Applications to braided quantum symmetries of graphs}\label{sec:graph-symmetry}
We end this article by describing an application of our results to braided quantum symmetries of a class of graphs, following \cites{banicagraphs}. To that end, let $\Gamma=(E,V)$ be a finite, simple, undirected graph with $N$ vertices, such that $\mathbb{Z}_{N}$ is a subgroup of the ordinary automorphism group $\mathrm{Aut}(\Gamma)$ of the graph $\Gamma$. Such a graph is usually called a circulant graph but note that in \cite[Definition 4.1]{banicagraphs}, it is called a cyclic graph. By relabeling the vertices if necessary, we can assume that the $\mathbb{Z}_N$\nobreakdash-action on $\textup{C}(V)$, $\rho^{\textup{C}(V)} : \textup{C}(V) \rightarrow \textup{C}(V) \otimes \textup{C}(\mathbb{Z}_N)$ is given by the corepresentaion matrix $\Omega_{V}=(\delta_{j-i})_{i,j \in \mathbb{Z}_N}$; implicit here is the identification of $V$ with $X_{N}$, so that all the results obtained in the previous section could be brought forth. Let us denote the adjacency matrix of $\Gamma$ by $A_{\Gamma}$ which has a particularly simple form; let $P=(\delta_{i+1,j})_{i,j \in \mathbb{Z}_{N}}$ denote the cyclic permutation matrix, i.e., $$ P=
\begin{pmatrix}
  0 & \dots & 0 & 1\\
  1 & \dots & 0 & 0\\
  \vdots & \vdots & \vdots & \vdots \\
  0 & \dots & 0 & 0\\
  0 & \dots & 1 & 0
\end{pmatrix},
$$then $A_{\Gamma}=\sum_{i \in \mathbb{Z}_{N}}a_{i}P^{i}$, where $a_{0}=0$, $a_{i}=a_{-i}$ $i \in \mathbb{Z}_{N}, i \neq 0$. Since $\mathbb{Z}_{N}$ acts as automorphisms of $\Gamma$, $\Omega_{V}$ commutes with $A_{\Gamma}$, i.e., $\Omega_{V}A_{\Gamma}=A_{\Gamma}\Omega_{V}$. Let us also recall the matrix $\Omega$ introduced just before Definition \ref{def:znalg}. A simple computation shows that the matrices $\Omega P \Omega^{-1}$ and $\Omega A_{\Gamma} \Omega^{-1}$ are diagonal, with $ij$-th entry $\omega^{j}\delta_{ij}$, and $\sum_{k \in \mathbb{Z}_{N}}a_{k}\omega^{ik}\delta_{ij}$, $i,j \in \mathbb{Z}_{N}$, respectively. The latter one is nothing but $Q(\omega^{i})\delta_{ij}$ where $Q$ is the polynomial defined in \cite{banicagraphs}*{Definition 4.2}.

We use the notations introduced in the beginning of this section, just before Definition \ref{def:cat} and in the first few lines of the proof of Proposition \ref{prop:universal}. Thus an action $\eta$ of an anyonic compact quantum group is completely determined by its corepresentaion matrix $\Omega_{\eta}$. We recall $q=(q_{ij})_{i,j \in \mathbb{Z}_N} \in M_N(\textup{C}(\textup{S}^+_N(\textup{R})))$; by virtue of Proposition \ref{prop:permobject}, $q$ is also $\Omega_{\eta^{\textup{C}(X_N)}}$ - the corepresentaion matrix of the action $\eta^{\textup{C}(X_N)}$ of $\textup{S}^+_N(\textup{R})$ on $\textup{C}(X_N)$. 

\begin{definition}
Let $\eta \in \mathrm{Mor}^{\mathbb{Z}_N}(\textup{C}(V), \textup{C}(V) \boxtimes_{\textup{R}} \textup{C}(G))$ be an action of an anyonic compact quantum group $G=(\textup{C}(G),\rho^{\textup{C}(G)},\Delta_G)$ on $\textup{C}(V)$. Then $G$ is said to act by preserving the quantum symmetry of $\Gamma$ if $\Omega_{\eta}$ commutes with $\Omega A_{\Gamma} \Omega^{-1}$.
\end{definition}

\begin{definition}\label{def:graphcat}
We define the category $\mathcal{C}(\Gamma)$ as follows. 
\begin{enumerate}
\item An object of $\mathcal{C}(\Gamma)$ is a pair $(G,\eta)$, where $G=(\textup{C}(G),\rho^{\textup{C}(G)},\Delta_G)$ is an anyonic compact quantum group, and $\eta \in \mathrm{Mor}^{\mathbb{Z}_N}(\textup{C}(V), \textup{C}(V) \boxtimes_{\textup{R}} \textup{C}(G))$ is a faithful (see Definition \ref{def:faithful}) action of $G$ on $\textup{C}(V)$, acting by preserving the quantum symmetry of $\Gamma$. 
\item Let $(G_1,\eta_1)$ and $(G_2,\eta_2)$ be two objects in $\mathcal{C}(\Gamma)$. A morphism $\phi : (G_1,\eta_1) \rightarrow (G_2,\eta_2)$ in $\mathcal{C}(\Gamma)$ is by definition a $\mathbb{Z}_N$\nobreakdash-equivariant Hopf $*$-homomorphism $\phi : \textup{C}(G_2) \rightarrow \textup{C}(G_1)$ such that $(\mathrm{id}_{\textup{C}(V)} \boxtimes_{\textup{R}} \phi)\circ \eta_2=\eta_1$.
\end{enumerate} 
\end{definition}
        
\begin{definition}
A terminal object in $\mathcal{C}(\Gamma)$ is called the anyonic quantum
symmetry group of $\Gamma$ and denoted by $(\mathrm{Qaut}(\Gamma),\eta^{\Gamma})$. 
\end{definition}

We now build another anyonic compact quantum group which we will identify with $\mathrm{Qaut}(\Gamma)$. To that end, let us denote the quotient $\textup{C}(\textup{S}^+_N(\textup{R}))/[q,\Omega A_{\Gamma}\Omega^{-1}]$ of the $\textup{C}^*$\nobreakdash-algebra $\textup{C}(\textup{S}^+_N(\textup{R}))$ by $\mathcal{U}$. We shall write $[q_{ij}]$ for the class of $q_{ij}$, $i,j \in \mathbb{Z}_N$. We observe that the $ij$\nobreakdash-th entry of $q\Omega A_{\Gamma}\Omega^{-1}$ is $q_{ij}(\Omega A_{\Gamma}\Omega^{-1})_{jj}$ and the $ij$\nobreakdash-th entry of $\Omega A_{\Gamma}\Omega^{-1}q$ is $(\Omega A_{\Gamma}\Omega^{-1})_{ii}q_{ij}$, for all $i,j \in \mathbb{Z}_N$. Using this observation, the following two propositions can easily be derived and we omit the proofs therefore.

\begin{proposition}
There is a unique unital $*$-homomorphism \[\rho^{\mathcal{U}} : \mathcal{U} \rightarrow \textup{C}(\mathbb{Z}_N, \mathcal{U})\] such that $\rho_{t}^{\mathcal{U}}([q_{ij}])=\omega^{t(j-i)}[q_{ij}]$ for each $i,j \in \mathbb{Z}_N$ and $t \in \mathbb{Z}_N$, satisfying the two conditions in Definition \textup{\ref{def:znalg}}, making $(\mathcal{U},\rho^{\mathcal{U}})$ a $\mathbb{Z}_N$\nobreakdash-$\textup{C}^*$\nobreakdash-algebra.
\end{proposition}

\begin{proposition}
There is a unique unital $*$-homomorphism \[\Delta_{\mathcal{U}} : \mathcal{U} \rightarrow \mathcal{U} \boxtimes_{\textup{R}} \mathcal{U}\] such that $\Delta_{\mathcal{U}}([q_{ij}])=\sum_{k \in \mathbb{Z}_N}j_1([q_{ik}])j_2([q_{kj}])$ for $i,j \in \mathbb{Z}_N$. Furthermore, $\Delta_{\mathcal{U}}$ is $\mathbb{Z}_N$\nobreakdash-equivariant, coassociative and bisimplifiable \textup{(}see Definition \textup{\ref{def:bcqg})}.
\end{proposition}

We observe that there is a unital $*$-homomorphism $\eta^{\mathcal{U}} : \textup{C}(V) \rightarrow \textup{C}(V) \boxtimes_{\textup{R}} \mathcal{U}$ obtained as the composite

\[\textup{C}(V) \rightarrow \textup{C}(V) \boxtimes_{\textup{R}} \textup{C}(\textup{S}^+_N(\textup{R})) \rightarrow \textup{C}(V) \boxtimes_{\textup{R}} \mathcal{U},\]where the first one is obtained from $\eta^{\textup{C}(X_N)}$ after identifying $V$ with $X_N$, and the second one is obtained from the quotient map $\textup{C}(\textup{S}^+_N(\textup{R})) \rightarrow \mathcal{U}$.

\begin{corollary}
There exists an anyonic compact quantum group $G_{\Gamma}$ such that $(\textup{C}(G_{\Gamma}),\rho^{\textup{C}(G_{\Gamma})},\Delta_{G_{\Gamma}})=(\mathcal{U},\rho^{\mathcal{U}},\Delta_{\mathcal{U}})$. Furthermore, $G_{\Gamma}$ acts faithfully on $\textup{C}(V)$ by preserving the quantum symmetry of $\Gamma$ via $\eta^{\mathcal{U}}$, denoted henceforth by $\eta^{\mathrm{C}(V)}$.
\end{corollary}

\begin{proof}
  The proof is immediate and hence is omitted.
\end{proof}

\begin{theorem}\label{theo:graph-symmetry}
Let $\Gamma=(E,V)$ be a finite, simple, undirected, circulant graph. Then the anyonic quantum symmetry group $(\mathrm{Qaut}(\Gamma),\eta^{\Gamma})$ of the graph $\Gamma$ exists.
\end{theorem}

\begin{proof}
We shall show that $(G_{\Gamma},\eta^{\mathrm{C}(V)})$ is the terminal object in the category $\mathcal{C}(\Gamma)$ and thus is isomorphic to $(\mathrm{Qaut}(\Gamma),\eta^{\Gamma})$. To that end, let $(G,\eta) \in \mathrm{Obj}(\mathcal{C}(\Gamma))$ be an object in the category $\mathcal{C}(\Gamma)$. By Theorem \ref{thm:main}, there is a unique $\psi_{G} \in \mathrm{Mor}^{\mathbb{\mathbb{Z}_{N}}}(\textup{C}(\textup{S}^+_N(\textup{R})), \textup{C}(G))$ such that $(\mathrm{id}_{\textup{C}(V)} \boxtimes_{\textup{R}} \phi)\circ \eta^{\textup{C}(X_{N})}=\eta$. Now since $G$ acts on $\Gamma$ by preserving its quantum symmetry, $\psi_{G}$ descends to a unique $\phi_{G} \in \mathrm{Mor}^{\mathbb{\mathbb{Z}_{N}}}(\textup{C}(G_{\Gamma}), \textup{C}(G))$ such that $(\mathrm{id}_{\textup{C}(V)} \boxtimes_{\textup{R}} \phi)\circ \eta^{\mathrm{C}(V)}=\eta$. This is equivalent to saying that $G_{\Gamma}$ is indeed the terminal object in the category $\mathcal{C}(\Gamma)$.
\end{proof}

Now that we have the existence of $\mathrm{Qaut}(\Gamma)$ established, we compute it in some simple examples.

\begin{example}[Complete graphs]
For the complete graph $\Gamma=K_{N}$ on $N$ vertices, we observe that $(\Omega A_{K_{N}}\Omega^{-1})_{00}=N-1$ and $(\Omega A_{K_{N}}\Omega^{-1})_{ii}=-1$ for 
 all $i \in \mathbb{Z}_{N} \setminus \{0\}$, consequently, $[q,\Omega
 A_{\Gamma}\Omega^{-1}]=0$. Therefore, as in the unbraided situation,
 $\mathrm{Qaut}(\Gamma)\cong\textup{S}_{N}^{+}(\textup{R})$. In particular,
 $\mathrm{Qaut}(C_{N})\cong\textup{S}_{N}^{+}(\textup{R})$ for $N=2,3$, where $C_{N}$ is undirected $N$-cycle.
\end{example}

\begin{remark}
Let us write  $\Gamma'$ for the complement graph of $\Gamma$. Then $A_{\Gamma'}=A_{K_{N}}-A_{\Gamma}$. Now since $q$ already commutes with $A_{K_{N}}$, it commutes with $A_{\Gamma}$ if and only if it commutes with $A_{\Gamma'}$. Thus we see that $\mathrm{Qaut}(\Gamma')$ exists and is isomorphic to $\mathrm{Qaut}(\Gamma)$. In particular $X_{N}=K_{N}'$, so that $\mathrm{Qaut}(X_{N})\cong\mathrm{Qaut}(K_{N})\cong\textup{S}_{N}^{+}(\textup{R})$.
\end{remark}

\begin{example}[Graphs formed by line segments]
Consider the graph $\Gamma$ formed by $N$ line segments. More precisely, $|V|=2N$, $|E|=N$ and the adjacency matrix is given by:
\[
  A_{\Gamma}=
  \left(
   \begin{array}{cc} 
   0_{N} & I_{N}\\
   I_{N} & 0_{N}
   \end{array}
   \right). 
\]
Indeed, $A_{\Gamma}$ is the symmetric circulant matrix with $a_{k}=0$ for
$k\in\mathbb{Z}_{2N}\setminus\{N\}$ and $a_{N}=1$, so that $(\Omega
A_{\Gamma}\Omega^{-1})_{ii}=\omega^{iN}=(-1)^{i}$ for
$i\in\mathbb{Z}_{2N}$. Then $q\Omega A_{\Gamma}\Omega^{-1}=\Omega
A_{\Gamma}\Omega^{-1}q$ yields $((-1)^{i}-(-1)^{j})q_{ij}=0$, which forces
$q_{ij}=0$ whenever $i\pm j$ is odd, for all $i,j\in\mathbb{Z}_{2N}$. In the
unbraided setting, the quantum symmetry group of $\Gamma$ is the hyperoctahedral
quantum group $H_{N}^{+}$ constructed in \cite{BBC2007}. However, we get a
different presentation of $H_{N}^{+}$. The $\textup{C}^{*}$-algebra 
$\textup{C}(H_{N}^{+})$ is generated by $q_{ij}$, for $i,j \in \mathbb{Z}_{2N}$
satisfying \eqref{eq:rel-ord} and $q_{ij}=0$, whenever $i\pm j$ is odd. The
comultiplication is given by $$ q_{ij} \mapsto \sum_{k \in \mathbb{Z}_{2N}}
q_{ik} \otimes q_{kj}, \quad i,j \in \mathbb{Z}_{2N}. $$Thus in our setting, it is natural to and we
do call the resulting $\mathrm{Qaut}(\Gamma)$ as the anyonic hyperoctahedral
quantum group and we denote it by $H_{N}^{+}(\textup{R})$. Following the
arguments used in the proof of Proposition \ref{prop:lowercases}, we can show that
$\textup{C}(H_{N}^{+}(\textup{R}))$ is a noncommutative and infinite dimensional
$\textup{C}^{*}$-algebra, for all $N\geq 2$. We note, however, that the
receptacle of the comultiplication is the commutative tensor product
$\textup{C}(H_{2}^{+}(\textup{R}))\otimes\textup{C}(H_{2}^{+}(\textup{R}))$ for
$N=2$ and the braided tensor product
$\textup{C}(H_{N}^{+}(\textup{R}))\boxtimes_{\textup{R}}\textup{C}(H_{N}^{+}(\textup{R}))$, for all $N\geq 3$.
Indeed, for the former, we note that
$$
j_{1}(q_{ij})j_{2}(q_{kl})=\omega^{(j-i)(l-k)}j_{2}(q_{kl})j_{1}(q_{ij})=j_{2}(q_
{kl})j_{1}(q_{ij}), 
$$since $i\pm j$ and $k\pm l$ are even in $\mathbb{Z}_{4}$, which says that
$H_{2}^{+}(\textup{R})$ is an ordinary compact quantum group and for the latter,
the arguments are similar to the one in Remark \ref{rem:outer-NC}. We also
observe that for $N=2$, $\Gamma'\cong C_{4}$, where $C_{4}$ is the
undirected cyclic graph on $4$ vertices (i.e., a square), hence
$\mathrm{Qaut}(C_{4})\cong H_{2}^{+}(\textup{R})$.  
\end{example}

One may ask what is the anyonic quantum symmetry group of an undirected
$N$-cycle $C_{N}$, when $N \geq 5$? The following result answers this question and generalizes \cite{banicagraphs}*{Theorem 4.2}.

\begin{theorem}\label{theo:dihedral}
Let $\Gamma$ be a circulant graph with $N \geq 5$
vertices. Consider the polynomial $Q(z)=\sum_{i\in\mathbb{Z}_{N}}a_{i}z^{i}$,
where $a_{i}$ is the element in the $i$-th column of the first row of the
adjacency matrix $A_{\Gamma}$. Suppose the numbers 
$$ 
Q(1),Q(\omega),\dots,Q(\omega^{[\frac{N}{2}]}) 
$$are distinct. Then $\textup{C}(\mathrm{Qaut}(\Gamma))\cong\textup{C}(D_{N})$,
where $D_{N}$ is the dihedral group, and the comultiplication map
$\Delta_{\mathrm{Qaut}(\Gamma)}$ takes values in $\textup{C}(\mathrm{Qaut}(\Gamma))\boxtimes_{\textup{R}}\textup{C}(\mathrm{Qaut}(\Gamma))$.
\end{theorem}

\begin{proof}
The approach is more or less similar to that of \cite{banicagraphs}*{Theorem
  4.2} and so we will be brief. Firstly, we observe that
$Q(\omega^{i})=Q(\omega^{-i})$ for all $i \in\mathbb{Z}_{N}$. Now since
$$ 
\Omega A_{\Gamma}\Omega^{-1}=\textup{diag}(Q(1), Q(\omega),\cdots , Q(\omega^{N-1})),
$$and $Q(1),Q(\omega),\dots,Q(\omega^{[\frac{N}{2}]})$ are distinct,
$q\Omega A_{\Gamma}\Omega^{-1}=\Omega A_{\Gamma}\Omega^{-1}q$ yields
$(Q(\omega^{i})-Q(\omega^{j}))q_{ij}=0,$ for all $i,j \in \mathbb{Z}_{N}$, forcing $q_{ij}=0$, whenever $i \pm j\neq 0$ in $\mathbb{Z}_{N}.$ 
 
By the definition of $\eta^{\Gamma}$, for any $i=0,\dots,N-2\in\mathbb{Z}_{N}$, we have 
$$  
\eta^{\Gamma}(P_{i+1})=\sum_{l\in\mathbb{Z}_{N}}j_{1}(P_{l})j_{2}(q_{l,i+1})=\sum_{l=\pm (i+1)}j_{1}(P_{l})j_{2}(q_{l,i+1}).
%  j_{1}(P_{i+1})j_{2}(q_{i+1,i+1})+j_{1}(P_{-(i+1)})j_{2}(q_{-(i+1),i+1}).
$$
Using Eq.\eqref{eq:P's}, we get $\frac{1}{N}\eta^{\Gamma}(P_{i+1})
=\eta^{\Gamma}(P_{i})\eta^{\Gamma}(P_{1})$, which in turn implies 
$$  
  q_{i+1,i+1}=q_{ii}q_{11}, 
  \quad 
  q_{-i-1,i+1}=\omega^{2i}q_{-i,i}q_{-1,1},
  \quad 
  q_{-i,i}q_{11}=0,
  \quad
  q_{ii}q_{-1,1}=0,
$$for any $i=0,\dots,N-2\in\mathbb{Z}_{N}$. The first two equations recursively
yield
$$  
q_{(i+1),(i+1)}=q_{11}^{i+1}, \quad q_{-(i+1),(i+1)}=\omega^{i(i+1)}q_{-1,1}^{(i+1)}
$$for $i=0,\cdots,N-2\in\mathbb{Z}_{N}$ and the remaining equations yield
$$  
q_{11}q_{-1,1}=0=q_{-1,1}q_{11}.
$$Using the third relation in Definition~\ref{def:permutationalg}, we obtain
$$  
q_{11}^{*}=q_{-1,-1}=q_{11}^{N-1}
$$and
$$  
q_{-1,1}^{*}=\omega^{2}q_{1,-1}=\omega^{2}q_{-(N-2+1),(N-2+1)}=q_{-1,1}^{N-1},
$$and therefore
$$  
q_{11}^{*}q_{-1,1}=0=q_{-1,1}q_{11}^{*}.
$$This shows that $\textup{C}(\mathrm{Qaut}(\Gamma))$ is isomorphic to
$\textup{C}(D_{N})$. However, the comultiplication
$\Delta_{\mathrm{Qaut}(\Gamma)}$ still takes values in the braided tensor
product
$\textup{C}(\mathrm{Qaut}(\Gamma))\boxtimes_{\textup{R}}\textup{C}(\mathrm{Qaut}(\Gamma))$. To
see this, we choose $i\in\mathbb{Z}_{N}$ with $\omega^{-4i^{2}}\neq 1$, so that 
$$
\Delta_{\Gamma}(q_{ii})=j_{1}(q_{i,-i})j_{2}(q_{-i,i})+j_{1}(q_{ii})j_{2}(q_{ii}). 
$$Since $q_{i,-i}$ and $q_{-i,i}$ are homogeneous of degrees $-2i$ and $2i$,
respectively, we obtain
$j_{1}(q_{i,-i})j_{2}(q_{-i,i})=\omega^{-4i^{2}}j_{2}(q_{-i,i})j_{1}(q_{i,-i}).$
\end{proof}

\begin{corollary}\label{cor:cyclic-graph}
For the undirected $N$-cycle $C_{N}$ with $N\geq 2$, we have the following.
\begin{enumerate}
\item For $N=2,3$, $\mathrm{Qaut}(C_{N})\cong\textup{S}^{+}_{N}(\textup{R})$.
\item For $N=4$, $\mathrm{Qaut}(C_{4})\cong H^{+}_{2}(\textup{R})$.
\item For $N\geq 5$, $\textup{C}(\mathrm{Qaut}(C_{N}))\cong\textup{C}(D_{N})$ and
$$  
\Delta_{\mathrm{Qaut}(C_{N})} \in \textup{Mor}^{\mathbb{Z}_{N}}(\textup{C}(\mathrm{Qaut}(C_{N})),\textup{C}(\mathrm{Qaut}(C_{N}))\boxtimes_{\textup{R}}\textup{C}(\mathrm{Qaut}(C_{N}))).
$$
\end{enumerate}
\end{corollary}

\appendix
\section{}\label{appendix:boso} In this appendix, we discuss the bosonization construction which gives an equivalence between the category of anyonic compact quantum groups and the category of ordinary compact quantum groups together with an idempotent quantum group homomorphism. Using this, we describe the representation theory of the anyonic quantum permutation group $\textup{S}^+_{N}(\mathrm{R})$. As this is mostly similar to the results in \cites{MR2021,BJR2022} and is needed at two places en route to our main theorem, our discussion will be brief, only highlighting the crucial points. We begin with recalling a few necessary preliminaries.

\begin{proposition}\label{prop:psi}\cite{mrw2016}
Let $(X,\rho^X)$ and $(Y,\rho^Y)$ be two $\mathbb{Z}_N$\nobreakdash-$\textup{C}^*$\nobreakdash-algebras. Then there is a unique morphism \[\psi^{X,Y} \in \mathrm{Mor}(\textup{C}(\mathbb{Z}_N) \boxtimes_{\textup{R}} X \boxtimes_{\textup{R}} Y, (\textup{C}(\mathbb{Z}_N) \boxtimes_{\textup{R}} X) \otimes (\textup{C}(\mathbb{Z}_N) \boxtimes_{\textup{R}} Y))\]
such that
\allowdisplaybreaks{
\begin{align*}
\psi^{X,Y}(j_1(x))={}&(j_1 \otimes j_1)\Delta_{\mathbb{Z}_N}(x),\\
\psi^{X,Y}(j_2(a))={}&(j_2 \otimes j_1)(\rho^{X}(a)),\\
\psi^{X,Y}(j_3(b))={}&1_{\textup{C}(\mathbb{Z}_N) \boxtimes_{\textup{R}} X} \otimes j_2(b),
\end{align*}
}for $x \in \textup{C}(\mathbb{Z}_N)$, $a \in X$, and $b \in Y$.    
\end{proposition}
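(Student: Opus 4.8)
The plan is to build $\psi^{X,Y}$ by the universal property of the braided tensor product, using the associativity $\textup{C}(\mathbb{Z}_N) \boxtimes_{\textup{R}} X \boxtimes_{\textup{R}} Y \cong (\textup{C}(\mathbb{Z}_N) \boxtimes_{\textup{R}} X) \boxtimes_{\textup{R}} Y$ to assemble the morphism in two stages, and then to read off uniqueness from the fact that $j_1(\textup{C}(\mathbb{Z}_N))$, $j_2(X)$, $j_3(Y)$ generate the source. Since all algebras in sight are unital, ``morphism'' means ``unital $*$-homomorphism'', and I will use (from \cite{mrw2014}, cf.\ the reasoning behind Lemma \ref{lem:morphism}) that a unital $*$-homomorphism out of a braided tensor product $P \boxtimes_{\textup{R}} Q$ into a unital $\textup{C}^*$-algebra $D$ is the same datum as a pair of unital $*$-homomorphisms $\alpha : P \to D$, $\beta : Q \to D$ whose images braided-commute, i.e.\ $\alpha(p)\beta(q) = \omega^{\mathrm{deg}(p)\mathrm{deg}(q)}\beta(q)\alpha(p)$ on homogeneous elements, the associated morphism sending $j_1(p)j_2(q) \mapsto \alpha(p)\beta(q)$.

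Concretely, with $D = (\textup{C}(\mathbb{Z}_N) \boxtimes_{\textup{R}} X) \otimes (\textup{C}(\mathbb{Z}_N) \boxtimes_{\textup{R}} Y)$, set
\[\alpha_1 = (j_1 \otimes j_1)\circ \Delta_{\mathbb{Z}_N} : \textup{C}(\mathbb{Z}_N) \to D, \qquad \alpha_2 = (j_2 \otimes j_1)\circ \rho^X : X \to D, \qquad \alpha_3(b) = 1_{\textup{C}(\mathbb{Z}_N)\boxtimes_{\textup{R}}X} \otimes j_2(b).\]
Each $\alpha_i$ is a unital $*$-homomorphism, being a composition of morphisms. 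On homogeneous elements these read $\alpha_1(z^s) = j_1(z^s) \otimes j_1(z^s)$, $\alpha_2(a) = j_2(a) \otimes j_1(z^t)$ for $a \in X(t)$, and $\alpha_3(b) = 1 \otimes j_2(b)$ for $b \in Y(r)$. I would then verify the three braided-commutation relations $\alpha_1(z^s)\alpha_2(a) = \omega^{st}\alpha_2(a)\alpha_1(z^s)$, $\alpha_2(a)\alpha_3(b) = \omega^{tr}\alpha_3(b)\alpha_2(a)$ and $\alpha_1(z^s)\alpha_3(b) = \omega^{sr}\alpha_3(b)\alpha_1(z^s)$ using nothing more than the commutation rule of $j_1$ and $j_2$ inside $\textup{C}(\mathbb{Z}_N) \boxtimes_{\textup{R}} X$ and inside $\textup{C}(\mathbb{Z}_N) \boxtimes_{\textup{R}} Y$, together with the commutativity of $\textup{C}(\mathbb{Z}_N)$ (which makes $j_1(z^s)$ and $j_1(z^t)$ commute in each factor). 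Note that the phases $st$, $tr$, $sr$ produced are exactly those demanded by the degrees $s,t,r$ of $z^s, a, b$ as homogeneous elements of the source.

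With these relations in hand, I would first glue $\alpha_1$ and $\alpha_2$ (via the first relation) into a morphism $\Phi : \textup{C}(\mathbb{Z}_N) \boxtimes_{\textup{R}} X \to D$ with $\Phi(j_1(x)) = \alpha_1(x)$, $\Phi(j_2(a)) = \alpha_2(a)$. Since $\textup{C}(\mathbb{Z}_N) \boxtimes_{\textup{R}} X$ is the closed span of the homogeneous elements $j_1(z^s)j_2(a)$ with $a \in X(t)$, of degree $s+t$, the last two relations give $\Phi(j_1(z^s)j_2(a))\,\alpha_3(b) = \omega^{(s+t)r}\alpha_3(b)\,\Phi(j_1(z^s)j_2(a))$, so by bilinearity and continuity the pair $(\Phi,\alpha_3)$ braided-commutes; applying the universal property of the outer $\boxtimes_{\textup{R}}$ and the identification $(\textup{C}(\mathbb{Z}_N) \boxtimes_{\textup{R}} X) \boxtimes_{\textup{R}} Y \cong \textup{C}(\mathbb{Z}_N) \boxtimes_{\textup{R}} X \boxtimes_{\textup{R}} Y$ yields $\psi^{X,Y}$, which has the stated values on $j_1, j_2, j_3$ by construction. (Alternatively one may invoke a ``triple'' version of the universal property directly with $\alpha_1,\alpha_2,\alpha_3$.) Uniqueness is immediate: $\textup{C}(\mathbb{Z}_N) \boxtimes_{\textup{R}} X \boxtimes_{\textup{R}} Y$ is the closed linear span of $j_1(\textup{C}(\mathbb{Z}_N))j_2(X)j_3(Y)$, hence generated as a $\textup{C}^*$-algebra by $j_1(\textup{C}(\mathbb{Z}_N)) \cup j_2(X) \cup j_3(Y)$, on which $\psi^{X,Y}$ is prescribed.

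The only genuine obstacle is the universal property of $\boxtimes_{\textup{R}}$ underlying the two gluing steps --- i.e.\ that a braided-commuting pair of unital $*$-homomorphisms really does assemble into a (bounded) $*$-homomorphism on the completed braided tensor product. At the $\textup{C}^*$-level this rests on the faithful spatial model of Definition \ref{def:braided} and the norm estimates of \cite{mrw2014} (and is already implicit in Lemmas \ref{lem:action} and \ref{lem:morphism}). Granting that, the remaining content is the three routine phase computations above; the only care needed is bookkeeping the degrees so that the phases produced inside each factor $\textup{C}(\mathbb{Z}_N) \boxtimes_{\textup{R}} X$ and $\textup{C}(\mathbb{Z}_N) \boxtimes_{\textup{R}} Y$ assemble to the phase $\omega^{\mathrm{deg}(p)\mathrm{deg}(q)}$ demanded on the source side.
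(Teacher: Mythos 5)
The paper offers no proof of Proposition \ref{prop:psi} at all --- it is quoted verbatim from \cite{mrw2016} --- so there is nothing in-text to compare against line by line; judged on its own terms, your argument is essentially correct. Your three phase computations are right: with $\deg(z^s)=s$, $a\in X(t)$, $b\in Y(r)$, the products $\alpha_1(z^s)\alpha_2(a)$, $\alpha_2(a)\alpha_3(b)$, $\alpha_1(z^s)\alpha_3(b)$ pick up exactly $\omega^{st}$, $\omega^{tr}$, $\omega^{sr}$ from the leg-wise commutation rules (using commutativity of $\textup{C}(\mathbb{Z}_N)$ in the second tensor factor for the first relation), and these match the degrees of $j_1(z^s)$, $j_2(a)$, $j_3(b)$ in the source; the uniqueness argument from $j_1(\textup{C}(\mathbb{Z}_N))j_2(X)j_3(Y)$ being total is also fine. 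The one load-bearing ingredient, which you correctly isolate, is the ``universal property'' that a braided-commuting pair of unital $*$-homomorphisms integrates to a morphism out of the spatially defined $P\boxtimes_{\textup{R}}Q$. That is genuinely the crux and is not free: in \cite{mrw2014}--\cite{mrw2016} the map $\psi^{X,Y}$ is instead assembled functorially, as a composite of morphisms one already has in hand (essentially $\mathrm{id}\boxtimes\rho^X\boxtimes\mathrm{id}$ followed by $\Delta_{\mathbb{Z}_N}$ on the first leg and a reshuffling isomorphism), which sidesteps any appeal to an abstract universal property of the spatial twisted tensor product. For the present $\mathbb{Z}_N$-graded situation your generators-and-relations route does work --- here $P\boxtimes_{\textup{R}}Q=\bigoplus_{s,t}j_1(P(s))j_2(Q(t))$ is a finite algebraic direct sum of graded pieces, so boundedness and well-definedness of the glued map can be checked directly on the spatial model --- but if you want the proof to be self-contained you should either prove that gluing lemma in the $\mathbb{Z}_N$-graded setting or replace it by the functorial composite, rather than citing it as folklore.
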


With $\psi$ in hand, we can now recall the definition of the bosonization of $G$.

\begin{proposition}\label{prop:bos}\cite{mrw2016}
Let $G=(\textup{C}(G),\rho^{\textup{C}(G)},\Delta_G)$ be an anyonic compact quantum group. Then the pair $(\textup{C}(\mathbb{Z}_N)\boxtimes_{\textup{R}} \textup{C}(G), \psi^{\textup{C}(G),\textup{C}(G)}\circ (\mathrm{id}_{\textup{C}(\mathbb{Z}_N)} \boxtimes_{\textup{R}} \Delta_G))$ satisfies the axioms for a compact quantum group, called the bosonization of $G$ and denoted by $G \rtimes \mathbb{Z}_N=(\textup{C}(G \rtimes \mathbb{Z}_N),\Delta_{G \rtimes \mathbb{Z}_N})$.
\end{proposition}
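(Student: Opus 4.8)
The statement is the specialization, to the quasitriangular quantum group $\mathbb{Z}_N$ equipped with the $\textup{R}$-matrix fixed above, of the general bosonization theorem of \cite{mrw2016}; I would therefore either invoke that theorem directly, or, for the sake of self-containedness, verify the three required properties on generators using the lemmas already established. Write $A=\textup{C}(\mathbb{Z}_N)\boxtimes_{\textup{R}}\textup{C}(G)$ and $\widetilde{\Delta}=\psi^{\textup{C}(G),\textup{C}(G)}\circ(\mathrm{id}_{\textup{C}(\mathbb{Z}_N)}\boxtimes_{\textup{R}}\Delta_G)$. The map $\widetilde{\Delta}$ is automatically a morphism $A\to A\otimes A$, being the composite of the morphism $\mathrm{id}_{\textup{C}(\mathbb{Z}_N)}\boxtimes_{\textup{R}}\Delta_G$ from Lemma \ref{lem:morphism} (after the associativity identification $\textup{C}(\mathbb{Z}_N)\boxtimes_{\textup{R}}(\textup{C}(G)\boxtimes_{\textup{R}}\textup{C}(G))\cong\textup{C}(\mathbb{Z}_N)\boxtimes_{\textup{R}}\textup{C}(G)\boxtimes_{\textup{R}}\textup{C}(G)$) with $\psi^{\textup{C}(G),\textup{C}(G)}$ from Proposition \ref{prop:psi}. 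So it remains to check coassociativity and the two cancellation (density) conditions. The first thing to record is the effect of $\widetilde{\Delta}$ on the generators of $A$, namely $j_1(\textup{C}(\mathbb{Z}_N))$ and $j_2(\textup{C}(G))$: tracing through the definitions one gets $\widetilde{\Delta}(j_1(x))=(j_1\otimes j_1)\Delta_{\mathbb{Z}_N}(x)$ for $x\in\textup{C}(\mathbb{Z}_N)$, and, writing $\Delta_G(a)$ schematically as a sum $\sum j_1(a')j_2(a'')$ in $\textup{C}(G)\boxtimes_{\textup{R}}\textup{C}(G)$, $\widetilde{\Delta}(j_2(a))=\sum (j_2\otimes j_1)\bigl(\rho^{\textup{C}(G)}(a')\bigr)\,(1_A\otimes j_2(a''))$ for $a\in\textup{C}(G)$.

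For \textbf{coassociativity}, both $(\widetilde{\Delta}\otimes\mathrm{id}_A)\widetilde{\Delta}$ and $(\mathrm{id}_A\otimes\widetilde{\Delta})\widetilde{\Delta}$ are morphisms $A\to A\otimes A\otimes A$, so it suffices to compare them on the generators $j_1(\textup{C}(\mathbb{Z}_N))$ and $j_2(\textup{C}(G))$. On $j_1(x)$ both sides collapse to $(j_1\otimes j_1\otimes j_1)(\Delta_{\mathbb{Z}_N}\otimes\mathrm{id})\Delta_{\mathbb{Z}_N}(x)$, which agree by coassociativity of $\Delta_{\mathbb{Z}_N}$. On $j_2(a)$ one expands both composites using the two formulas above; the resulting identity is forced by three inputs combined together: coassociativity of $\Delta_G$ (Definition \ref{def:bcqg}(1)), the coaction identity $(\rho^{\textup{C}(G)}\otimes\mathrm{id})\rho^{\textup{C}(G)}=(\mathrm{id}\otimes\Delta_{\mathbb{Z}_N})\rho^{\textup{C}(G)}$ (Definition \ref{def:znalg}(1)), and the $\mathbb{Z}_N$-equivariance of $\Delta_G$, i.e.\ $\rho^{\textup{C}(G)\boxtimes_{\textup{R}}\textup{C}(G)}\circ\Delta_G=(\Delta_G\otimes\mathrm{id}_{\textup{C}(\mathbb{Z}_N)})\circ\rho^{\textup{C}(G)}$, which controls how $\rho^{\textup{C}(G)}$ interacts with the first leg of $\Delta_G$.

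For the \textbf{cancellation conditions} we show $\widetilde{\Delta}(A)(1_A\otimes A)=A\otimes A$ and $\widetilde{\Delta}(A)(A\otimes 1_A)=A\otimes A$. Since $A=j_1(\textup{C}(\mathbb{Z}_N))j_2(\textup{C}(G))$ and hence $A\otimes A$ is the closed span of the elements $j_1(x)j_2(a)\otimes j_1(y)j_2(b)$, it is enough to produce all such elements inside the closed span of $\widetilde{\Delta}(j_1(x')j_2(a'))\cdot(1_A\otimes j_1(y')j_2(b'))$. Using the formulas for $\widetilde{\Delta}$ on generators, one multiplies out and then applies, in turn: bisimplifiability of $\Delta_G$ (Definition \ref{def:bcqg}(2)) to absorb the $j_2(a'')$-part against $1\otimes j_2(\textup{C}(G))$; the Podle\'s-type density $\rho^{\textup{C}(G)}(\textup{C}(G))(1_{\textup{C}(G)}\otimes\textup{C}(\mathbb{Z}_N))=\textup{C}(G)\otimes\textup{C}(\mathbb{Z}_N)$ (Definition \ref{def:znalg}(2)) together with the cancellation property of $\Delta_{\mathbb{Z}_N}$ to recover arbitrary $\textup{C}(\mathbb{Z}_N)$-factors in both legs; and the braided commutation relation $(x\boxtimes_{\textup{R}}1)(1\boxtimes_{\textup{R}}a)=\omega^{\deg(x)\deg(a)}(1\boxtimes_{\textup{R}}a)(x\boxtimes_{\textup{R}}1)$ to reorder each leg into the normal form $j_1(\cdot)j_2(\cdot)$. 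The second density identity is proved the same way, starting instead from the other half of the bisimplifiability of $\Delta_G$.

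The \textbf{main obstacle} is the cancellation computation: in contrast to the purely algebraic Radford bosonization, one works with norm-closed linear spans and multiplier algebras, and must carefully push the braided flip relations (which carry the phase $\omega^{\deg\cdot\deg}$) through the products while keeping track of how the associativity isomorphism of $\boxtimes_{\textup{R}}$ identifies the inclusions $j_1,j_2,j_3$; the coassociativity check on $j_2(a)$ is bookkeeping of the same braided factors but is less delicate. Since each ingredient is either an axiom of $G$ as an anyonic compact quantum group or an instance of the lemmas in Section \ref{sec:bcqg}, no new phenomenon arises beyond this bookkeeping, and one may equally regard the proposition as the direct specialization of \cite{mrw2016}*{Theorem 6.3ff.} to the present setting.
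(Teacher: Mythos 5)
Your proposal is correct and matches the paper's treatment: the paper offers no proof of this proposition at all, simply recalling it as a special case of the general bosonization theorem of \cite{mrw2016}, which is exactly your primary route. Your additional sketch of the generator-level verification (the formulas for $\widetilde{\Delta}$ on $j_1(\textup{C}(\mathbb{Z}_N))$ and $j_2(\textup{C}(G))$, and the roles of coassociativity, equivariance, bisimplifiability and the Podle\'s condition) is a faithful expansion of what that citation hides, and is consistent with the computations the paper later performs in Theorem \ref{thm:bosopermutation}.
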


The next theorem describes the bosonization of $\textup{S}_N^+(\textup{R})$ explicitly; the proof is similar to the proof of \cite{BJR2022}*{Theorem 3.3}

\begin{theorem}\label{thm:bosopermutation}
Let $\textup{S}_{N}^+(\textup{R}) \rtimes \mathbb{Z}_N=(\textup{C}(\textup{S}_{N}^+(\textup{R}) \rtimes \mathbb{Z}_N),\Delta_{\textup{S}_{N}^+(\textup{R}) \rtimes \mathbb{Z}_N})$ be the bosonization of the anyonic quantum permutation group $\textup{S}_{N}^+(\textup{R})$. Then $\textup{C}(\textup{S}_{N}^+(\textup{R}) \rtimes \mathbb{Z}_N)$ is the universal unital $\textup{C}^*$\nobreakdash-algebra generated by elements $z$ and $q_{ij}$ for $i,j \in \mathbb{Z}_N$ subject to
\begin{enumerate}
    \item the relations $zz^*=z^*z=1=z^N$,
    \item the commutation relations $zq_{ij}=\omega^{j-i}q_{ij}z$, for $i,j \in \mathbb{Z}_N$,
    \item and the relations in Definition \textup{\ref{def:permutationalg}}.
\end{enumerate}
Furthermore, the comultiplication $\Delta_{\textup{S}_{N}^+(\textup{R}) \rtimes \mathbb{Z}_N}$ is given by
\begin{equation}
\Delta_{\textup{S}_{N}^+(\textup{R}) \rtimes \mathbb{Z}_N}(z)=z \otimes z, \quad \Delta_{\textup{S}_{N}^+(\textup{R}) \rtimes \mathbb{Z}_N}(q_{ij})=\sum_{k \in \mathbb{Z}_N} q_{ik} \otimes z^{k-i}q_{kj},
\end{equation} for $i,j \in \mathbb{Z}_N$.
\end{theorem}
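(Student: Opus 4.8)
The plan is to identify $\textup{C}(\textup{S}_N^+(\textup{R}) \rtimes \mathbb{Z}_N)$ with the crossed product $\textup{C}(\textup{S}_N^+(\textup{R})) \rtimes_{\alpha} \mathbb{Z}_N$, where $\alpha$ is the automorphism coming from the $\mathbb{Z}_N$-grading (so $\alpha$ multiplies a homogeneous element of degree $t$ by $\omega^t$), and then to compute the comultiplication explicitly through $\psi^{\textup{C}(G),\textup{C}(G)} \circ (\mathrm{id}_{\textup{C}(\mathbb{Z}_N)} \boxtimes_{\textup{R}} \Delta_G)$ from Proposition \ref{prop:bos}. First I would make precise the abstract description: the braided tensor product $\textup{C}(\mathbb{Z}_N) \boxtimes_{\textup{R}} \textup{C}(G)$ is, by Definition \ref{def:braided}, generated by $j_1(z)$ and $j_2(q_{ij})$ inside $\mathbb{B}(\mathcal{L}^X \otimes \mathcal{L}^Y)$; writing $z := j_1(z)$ and $q_{ij} := j_2(q_{ij})$ (abusing notation), the relation $zz^* = z^*z = 1 = z^N$ is immediate since $z \in \textup{C}(\mathbb{Z}_N)$ satisfies it, the relations in Definition \ref{def:permutationalg} hold because $j_2$ is a $*$-homomorphism, and the commutation relation $zq_{ij} = \omega^{j-i}q_{ij}z$ is exactly the braided commutation identity displayed after Definition \ref{def:braided}, using that $z$ has degree $1$ (since $\rho^{\textup{C}(\mathbb{Z}_N)}(z) = z \otimes z$ means $\Delta_{\mathbb{Z}_N}(z)=z\otimes z$, so $z$ is homogeneous of degree $1$) and $q_{ij}$ has degree $j-i$.

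Next I would prove the universality claim. One direction is clear: the generators $j_1(z)$, $j_2(q_{ij})$ satisfy the listed relations, so there is a surjective morphism from the universal $\textup{C}^*$-algebra $B$ with those generators and relations onto $\textup{C}(\mathbb{Z}_N) \boxtimes_{\textup{R}} \textup{C}(G)$. For the reverse, I would exhibit a morphism $\textup{C}(\mathbb{Z}_N) \boxtimes_{\textup{R}} \textup{C}(G) \to B$: the relations $z^N = 1$ plus $zq_{ij}z^* = \omega^{j-i}q_{ij}$ say precisely that $B$ carries a copy of $\textup{C}(\mathbb{Z}_N)$ (generated by $z$) acting on the copy of $\textup{C}(\textup{S}_N^+(\textup{R}))$ (generated by the $q_{ij}$) by the grading automorphism, i.e.\ $B$ is a quotient of the crossed product; and the crossed product $\textup{C}(\textup{S}_N^+(\textup{R})) \rtimes \mathbb{Z}_N$ is well known to coincide with $\textup{C}(\mathbb{Z}_N) \boxtimes_{\textup{R}} \textup{C}(G)$ (this is the standard identification of the bosonization with a crossed product, as in \cite{mrw2016} and used in \cite{MR2021}, \cite{BJR2022}). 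So I would cite that identification and conclude the quotient map $B \to \textup{C}(\mathbb{Z}_N) \boxtimes_{\textup{R}} \textup{C}(G)$ is an isomorphism, perhaps by a dimension/Fourier-decomposition count: both $B$ and the crossed product decompose as $\bigoplus_{t \in \mathbb{Z}_N} \textup{C}(\textup{S}_N^+(\textup{R})) z^t$ as $\textup{C}(\textup{S}_N^+(\textup{R}))$-bimodules, forcing the surjection to be injective.

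Finally I would compute $\Delta_{\textup{S}_N^+(\textup{R}) \rtimes \mathbb{Z}_N}$ on generators. Applying $\mathrm{id}_{\textup{C}(\mathbb{Z}_N)} \boxtimes_{\textup{R}} \Delta_{\textup{S}_N^+(\textup{R})}$ and then $\psi^{\textup{C}(G),\textup{C}(G)}$ as in Propositions \ref{prop:psi} and \ref{prop:bos}: on $z = j_1(z)$ one gets $\psi(j_1(z)) = (j_1 \otimes j_1)\Delta_{\mathbb{Z}_N}(z) = j_1(z) \otimes j_1(z) = z \otimes z$; on $q_{ij} = j_2(q_{ij})$ one first has $(\mathrm{id} \boxtimes_{\textup{R}} \Delta_{\textup{S}_N^+(\textup{R})})(j_2(q_{ij})) = \sum_k j_2(q_{ik}) j_3(q_{kj})$ (writing $j_2,j_3$ for the two legs of $\boxtimes_{\textup{R}}\textup{C}(G)$ inside the triple braided product), then $\psi$ sends $j_2(q_{ik}) \mapsto (j_2 \otimes j_1)(\rho^{\textup{C}(G)}(q_{ik})) = j_2(q_{ik}) \otimes j_1(z^{k-i})$ since $q_{ik}$ is homogeneous of degree $k-i$, and $j_3(q_{kj}) \mapsto 1 \otimes j_2(q_{kj})$; multiplying inside $(\textup{C}(\mathbb{Z}_N)\boxtimes_{\textup{R}}\textup{C}(G)) \otimes (\textup{C}(\mathbb{Z}_N)\boxtimes_{\textup{R}}\textup{C}(G))$ and translating back into the $z, q_{ij}$ notation gives $\sum_k q_{ik} \otimes z^{k-i} q_{kj}$, as claimed. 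The main obstacle is making the universality argument airtight — in particular verifying that the crossed product $\textup{C}(\textup{S}_N^+(\textup{R})) \rtimes \mathbb{Z}_N$ is genuinely isomorphic (not just a quotient of, nor quotient onto) the abstract universal algebra $B$; I expect to handle this by the homogeneous-component argument sketched above, noting that every $\textup{C}^*$-seminorm on the universal $*$-algebra is bounded because $z$ is unitary and the $q_{ij}$ are contractive (by unitarity of $q$, Theorem \ref{thm:quotient}), so the universal $\textup{C}^*$-algebra exists and its faithful representation is forced.
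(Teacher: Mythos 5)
Your proposal is correct and follows essentially the same route as the paper: identify $\textup{C}(\textup{S}_N^+(\textup{R})\rtimes\mathbb{Z}_N)$ with $j_1(\textup{C}(\mathbb{Z}_N))j_2(\textup{C}(\textup{S}_N^+(\textup{R})))$, read off relations (1)--(3) from the braided commutation rule using $\deg(z)=1$ and $\deg(q_{ij})=j-i$, and compute the comultiplication on generators via $\psi\circ(\mathrm{id}\boxtimes_{\textup{R}}\Delta_{\textup{S}_N^+(\textup{R})})$ exactly as the paper does. The only point where you go beyond the paper is the universality claim: the paper merely checks that the generators of the braided tensor product satisfy the listed relations (so that it is a quotient of the universal algebra) and treats the converse as clear from the definition, whereas your crossed-product identification and homogeneous-component argument for injectivity of the canonical surjection supply a justification that the paper leaves implicit.
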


We next describe the bosonization of the anyonic free unitary quantum group, for the sake of completeness.

\begin{theorem}\label{thm:bosounitary}
Let $\textup{U}_{N}^+(\textup{R}) \rtimes \mathbb{Z}_N=(\textup{C}(\textup{U}_{N}^+(\textup{R}) \rtimes \mathbb{Z}_N),\Delta_{\textup{U}_{N}^+(\textup{R}) \rtimes \mathbb{Z}_N})$ be the bosonization of the anyonic free unitary quantum group $\textup{U}_{N}^+(\textup{R})$. Then $\textup{C}(\textup{U}_{N}^+(\textup{R}) \rtimes \mathbb{Z}_N)$ is the universal unital $\textup{C}^*$\nobreakdash-algebra generated by elements $z$ and $u_{ij}$ for $i,j \in \mathbb{Z}_N$ subject to
\begin{enumerate}
\item the relations $zz^*=z^*z=1=z^N$,
\item the commutation relations $zu_{ij}=\omega^{j-i}u_{ij}z$, for $i,j \in \mathbb{Z}_N$,
\item and the relations in Definition \textup{\ref{def:unitaryalg}}.
\end{enumerate}
Furthermore, the comultiplication $\Delta_{\textup{U}_{N}^+(\textup{R}) \rtimes \mathbb{Z}_N}$ is given by
\begin{equation}
\Delta_{\textup{U}_{N}^+(\textup{R}) \rtimes \mathbb{Z}_N}(z)=z \otimes z, \quad \Delta_{\textup{U}_{N}^+(\textup{R}) \rtimes \mathbb{Z}_N}(u_{ij})=\sum_{k \in \mathbb{Z}_N} u_{ik} \otimes z^{k-i}u_{kj},
\end{equation} for $i,j \in \mathbb{Z}_N$.
\end{theorem}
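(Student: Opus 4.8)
The proof will run entirely parallel to that of Theorem~\ref{thm:bosopermutation}, with $q_{ij}$ replaced by $u_{ij}$, Definition~\ref{def:permutationalg} by Definition~\ref{def:unitaryalg}, Proposition~\ref{prop:comult} by Proposition~\ref{prop:comultuni}, and the coaction $\rho^{\textup{C}(\textup{S}_N^+(\textup{R}))}$ by $\rho^{\textup{C}(\textup{U}_N^+(\textup{R}))}$. First, by Proposition~\ref{prop:bos} one has $\textup{C}(\textup{U}_N^+(\textup{R}) \rtimes \mathbb{Z}_N) = \textup{C}(\mathbb{Z}_N) \boxtimes_{\textup{R}} \textup{C}(\textup{U}_N^+(\textup{R}))$, which by Definition~\ref{def:braided} is the sub-$\textup{C}^*$-algebra $j_1(\textup{C}(\mathbb{Z}_N))\, j_2(\textup{C}(\textup{U}_N^+(\textup{R})))$ of $\mathbb{B}(\ell^2(\mathbb{Z}_N) \otimes \mathcal{L})$, with $\textup{C}(\mathbb{Z}_N)$ represented on $\ell^2(\mathbb{Z}_N)$ by multiplication and $\textup{C}(\textup{U}_N^+(\textup{R}))$ represented faithfully and $\mathbb{Z}_N$-equivariantly on $\mathcal{L}$. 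I would then set $z := j_1(z)$ and $u_{ij} := j_2(u_{ij})$ and verify the three families of relations: $zz^* = z^*z = 1 = z^N$ holds since $j_1$ is a unital $*$-homomorphism and these relations hold in $\textup{C}(\mathbb{Z}_N)$; the relations of Definition~\ref{def:unitaryalg} (that $u$ and $\overline{u}_{\textup{R}}$ are unitary) hold since $j_2$ is a unital $*$-homomorphism and these are $*$-polynomial relations among the $u_{ij}$; and the commutation relations $z u_{ij} = \omega^{j-i} u_{ij} z$ follow from the braided commutation relation recalled just before Lemma~\ref{lem:action}, using that $z$ is homogeneous of degree $1$ and $u_{ij}$ homogeneous of degree $j-i$.

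Next I would establish universality. The previous step produces a surjective morphism from the universal $\textup{C}^*$-algebra $B$ with the stated generators and relations onto $\textup{C}(\mathbb{Z}_N) \boxtimes_{\textup{R}} \textup{C}(\textup{U}_N^+(\textup{R}))$. For the inverse, I would invoke the universal property of the braided tensor product (cf.\ \cite{mrw2014}): inside $B$, the order-$N$ unitary $z$ determines a unital $*$-homomorphism $\textup{C}(\mathbb{Z}_N) \to B$, the unitarity of $u$ and $\overline{u}_{\textup{R}}$ determines a unital $*$-homomorphism $\textup{C}(\textup{U}_N^+(\textup{R})) \to B$, and the relations $z u_{ij} = \omega^{j-i} u_{ij} z$ are exactly the braiding relations (on generators, hence by multiplicativity on all homogeneous elements) needed to amalgamate these two into a morphism $\textup{C}(\mathbb{Z}_N) \boxtimes_{\textup{R}} \textup{C}(\textup{U}_N^+(\textup{R})) \to B$ inverse to the surjection. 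This is the same reasoning as in the proof of Theorem~\ref{thm:bosopermutation} and transcribes verbatim.

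Finally, to compute the comultiplication I would evaluate $\Delta_{\textup{U}_N^+(\textup{R}) \rtimes \mathbb{Z}_N} = \psi \circ (\mathrm{id}_{\textup{C}(\mathbb{Z}_N)} \boxtimes_{\textup{R}} \Delta_{\textup{U}_N^+(\textup{R})})$ on the generators, with $\psi = \psi^{\textup{C}(\textup{U}_N^+(\textup{R})), \textup{C}(\textup{U}_N^+(\textup{R}))}$ as in Proposition~\ref{prop:psi} and using Proposition~\ref{prop:bos}. On $z$: Lemma~\ref{lem:morphism} gives $(\mathrm{id}_{\textup{C}(\mathbb{Z}_N)} \boxtimes_{\textup{R}} \Delta_{\textup{U}_N^+(\textup{R})})(j_1(z)) = j_1(z)$, and then Proposition~\ref{prop:psi} gives $\psi(j_1(z)) = (j_1 \otimes j_1)\Delta_{\mathbb{Z}_N}(z) = j_1(z) \otimes j_1(z)$, i.e.\ $\Delta_{\textup{U}_N^+(\textup{R}) \rtimes \mathbb{Z}_N}(z) = z \otimes z$. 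On $u_{ij}$: Lemma~\ref{lem:morphism} and Proposition~\ref{prop:comultuni} give $(\mathrm{id}_{\textup{C}(\mathbb{Z}_N)} \boxtimes_{\textup{R}} \Delta_{\textup{U}_N^+(\textup{R})})(j_2(u_{ij})) = \sum_{k \in \mathbb{Z}_N} j_2(u_{ik})\, j_3(u_{kj})$ inside $\textup{C}(\mathbb{Z}_N) \boxtimes_{\textup{R}} \textup{C}(\textup{U}_N^+(\textup{R})) \boxtimes_{\textup{R}} \textup{C}(\textup{U}_N^+(\textup{R}))$; applying $\psi$ and using $\rho^{\textup{C}(\textup{U}_N^+(\textup{R}))}(u_{ik}) = u_{ik} \otimes z^{k-i}$ together with the three formulas of Proposition~\ref{prop:psi}, one obtains $\psi(j_2(u_{ik})\, j_3(u_{kj})) = j_2(u_{ik}) \otimes j_1(z^{k-i})\, j_2(u_{kj})$, whence $\Delta_{\textup{U}_N^+(\textup{R}) \rtimes \mathbb{Z}_N}(u_{ij}) = \sum_{k \in \mathbb{Z}_N} u_{ik} \otimes z^{k-i} u_{kj}$. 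I expect the only genuinely delicate point to be the universality claim---that $B$ is isomorphic to, and not merely a quotient of, the braided tensor product---since it rests on the universal property of $\boxtimes_{\textup{R}}$ rather than on a direct manipulation of the generators; everything else is a routine transcription of the $\textup{S}_N^+(\textup{R})$ computation.
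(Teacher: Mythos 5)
Your proposal is correct and is exactly the paper's intended argument: the paper's proof of this theorem consists of the single sentence that it is similar to the proof of Theorem~\ref{thm:bosopermutation}, and your transcription (generators from $j_1$, $j_2$, the braided commutation relation for item (2), and the $\psi \circ (\mathrm{id}_{\textup{C}(\mathbb{Z}_N)} \boxtimes_{\textup{R}} \Delta_{\textup{U}_N^{+}(\textup{R})})$ computation on generators) matches it step for step. Your extra remark that universality of $B$ requires the universal property of $\boxtimes_{\textup{R}}$ rather than just a surjection is a fair observation about a point the paper leaves implicit in both proofs, but it does not change the route.
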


Our next aim is to prove that the bosonization $\textup{S}_{N}^+(\textup{R}) \rtimes \mathbb{Z}_N$ is a compact matrix quantum group. For that, we need to dig slightly deeper into the representation theory of $\textup{S}_{N}^+(\textup{R})$; by the techniques in \cite{mrw2016}, it can be shown that representations of $\textup{S}_{N}^+(\textup{R})$ are equivalent to representations of the bosonization $\textup{S}_{N}^+(\textup{R}) \rtimes \mathbb{Z}_N$. A detailed study of this correspondence will be undertaken shortly. For the moment, we concentrate on the special case where the carrier Hilbert space is finite dimensional, in which case it is easier to describe this correspondence using the matrix coefficients.

\begin{proposition}\label{prop:fundamental}
Let $t_{ij}=j_1(z^{i})j_2(q_{ij}) \in \textup{C}(\textup{S}_{N}^+(\textup{R}) \rtimes \mathbb{Z}_N)=\textup{C}(\mathbb{Z}_N) \boxtimes_{\textup{R}} \textup{C}(\textup{S}_{N}^+(\textup{R}))$ for $i,j \in \mathbb{Z}_N$. Then $t=(t_{ij})_{i,j \in \mathbb{Z}_N} \in M_N(\textup{C}(\textup{S}_{N}^+(\textup{R}) \rtimes \mathbb{Z}_N))$ defines a finite dimensional unitary representation of the compact quantum group $\textup{S}_{N}^+(\textup{R}) \rtimes \mathbb{Z}_N$.   
\end{proposition}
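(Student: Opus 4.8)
The plan is to verify directly, from the explicit formulas, the two defining conditions of a finite-dimensional unitary representation of the compact quantum group $\textup{S}_{N}^+(\textup{R}) \rtimes \mathbb{Z}_N$: first, that $t=(t_{ij})_{i,j \in \mathbb{Z}_N}$ is a unitary element of $M_N(\textup{C}(\textup{S}_{N}^+(\textup{R}) \rtimes \mathbb{Z}_N))$, and second, that $(\mathrm{id}_{M_N} \otimes \Delta_{\textup{S}_{N}^+(\textup{R}) \rtimes \mathbb{Z}_N})(t)=t_{12}t_{13}$, i.e.\ $\Delta_{\textup{S}_{N}^+(\textup{R}) \rtimes \mathbb{Z}_N}(t_{ij})=\sum_{k}t_{ik}\otimes t_{kj}$. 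Throughout I would use that $t_{ij}=j_1(z^i)j_2(q_{ij})$, hence $t_{ij}^*=j_2(q_{ij}^*)j_1(z^{-i})$, that $z$ is homogeneous of degree $1$ and $q_{ij}$ of degree $j-i$, and the commutation rule $j_1(x)j_2(y)=\omega^{\mathrm{deg}(x)\mathrm{deg}(y)}j_2(y)j_1(x)$ for homogeneous $x,y$ recorded just before Lemma \ref{lem:action}.

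For the relation $t^*t=I_N$ I would compute, for $i,j \in \mathbb{Z}_N$, that $(t^*t)_{ij}=\sum_{k}t_{ki}^*t_{kj}=\sum_{k}j_2(q_{ki}^*)j_1(z^{-k})j_1(z^{k})j_2(q_{kj})=j_2\bigl(\sum_{k}q_{ki}^*q_{kj}\bigr)$, and this equals $\delta_{ij}1$ because $q$ is a unitary matrix, a fact established inside the proof of Theorem \ref{thm:quotient}. For $tt^*=I_N$ I would compute $(tt^*)_{ij}=\sum_{k}t_{ik}t_{jk}^*=\sum_{k}j_1(z^{i})j_2(q_{ik}q_{jk}^*)j_1(z^{-j})$; since $q_{ik}q_{jk}^*$ is homogeneous of degree $j-i$, commuting $j_2(q_{ik}q_{jk}^*)$ past $j_1(z^{-j})$ produces the phase $\omega^{j(j-i)}$, so that $(tt^*)_{ij}=\omega^{j(j-i)}j_1(z^{i-j})j_2\bigl(\sum_{k}q_{ik}q_{jk}^*\bigr)=\omega^{j(j-i)}\delta_{ij}j_1(z^{i-j})$, again using unitarity of $q$ (Theorem \ref{thm:quotient}); this is $\delta_{ij}1$ since both the phase and the power of $z$ become trivial on the diagonal.

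For the corepresentation identity I would use that $\Delta_{\textup{S}_{N}^+(\textup{R}) \rtimes \mathbb{Z}_N}$ is a unital $*$-homomorphism together with the explicit formulas of Theorem \ref{thm:bosopermutation}: $\Delta_{\textup{S}_{N}^+(\textup{R}) \rtimes \mathbb{Z}_N}(j_1(z))=j_1(z)\otimes j_1(z)$, whence $\Delta_{\textup{S}_{N}^+(\textup{R}) \rtimes \mathbb{Z}_N}(j_1(z^{i}))=j_1(z^{i})\otimes j_1(z^{i})$, and $\Delta_{\textup{S}_{N}^+(\textup{R}) \rtimes \mathbb{Z}_N}(j_2(q_{ij}))=\sum_{k}j_2(q_{ik})\otimes j_1(z^{k-i})j_2(q_{kj})$. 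Multiplying these, $\Delta_{\textup{S}_{N}^+(\textup{R}) \rtimes \mathbb{Z}_N}(t_{ij})=\sum_{k}\bigl(j_1(z^{i})j_2(q_{ik})\bigr)\otimes\bigl(j_1(z^{i})j_1(z^{k-i})j_2(q_{kj})\bigr)=\sum_{k}t_{ik}\otimes t_{kj}$, using $j_1(z^{i})j_1(z^{k-i})=j_1(z^{k})$. Combined with the unitarity, this shows $t$ is a finite-dimensional unitary representation of $\textup{S}_{N}^+(\textup{R}) \rtimes \mathbb{Z}_N$.

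I do not expect a serious obstacle here; the computation is essentially bookkeeping. The only point requiring genuine care is tracking the grading-induced phase factors when reordering $j_1$ and $j_2$ — in particular, in the $tt^*$ computation the phase $\omega^{j(j-i)}$ and the factor $j_1(z^{i-j})$ are present for all $i,j$ and only disappear once one has invoked unitarity of $q$ to force $i=j$, so the order in which one simplifies matters.
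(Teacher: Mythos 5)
Your proposal is correct and follows essentially the same route as the paper: unitarity of $t$ is reduced to the unitarity of the matrix $q$ from Theorem \ref{thm:quotient}, and the corepresentation identity follows by multiplying the explicit formulas for $\Delta_{\textup{S}_{N}^+(\textup{R}) \rtimes \mathbb{Z}_N}$ on $j_1(z)$ and $j_2(q_{ij})$ from Theorem \ref{thm:bosopermutation}. The only cosmetic difference is in the $tt^*$ computation, where the paper first collapses $\sum_k j_2(q_{ik})j_2(q_{jk}^*)$ to $\delta_{ij}$ between the two $j_1$ factors (avoiding any phase bookkeeping), whereas you commute past $j_1(z^{-j})$ first; both give the same result.
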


Again, we omit the proof and refer the interested reader to \cite{BJR2022}*{Proposition 3.4}.

\begin{corollary}\label{cor:cqmgpermutation}
The pair $(\textup{C}(\textup{S}_{N}^+(\textup{R}) \rtimes \mathbb{Z}_N), z \oplus t)$ is a compact matrix quantum group.
\end{corollary}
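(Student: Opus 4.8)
The plan is to verify the two defining axioms of a compact matrix quantum group for the pair $(\textup{C}(\textup{S}_{N}^+(\textup{R}) \rtimes \mathbb{Z}_N), z \oplus t)$: first, that the entries of $z \oplus t$ generate $\textup{C}(\textup{S}_{N}^+(\textup{R}) \rtimes \mathbb{Z}_N)$ as a $\textup{C}^*$-algebra; second, that both $z \oplus t$ and its transpose (equivalently its conjugate $\overline{z \oplus t}$) are invertible in the relevant matrix algebra; and third, that the comultiplication satisfies $\Delta(v_{ij}) = \sum_k v_{ik} \otimes v_{kj}$ on all matrix entries $v_{ij}$ of $z \oplus t$. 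The last point is already settled by Proposition \ref{prop:fundamental} for the block $t$ and by $\Delta(z) = z \otimes z$ (Theorem \ref{thm:bosopermutation}) for the one-dimensional block $z$, and a direct sum of corepresentations is a corepresentation; so this axiom requires essentially no further work.

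For the generation axiom, I would argue as follows. By Theorem \ref{thm:bosopermutation}, $\textup{C}(\textup{S}_{N}^+(\textup{R}) \rtimes \mathbb{Z}_N)$ is generated by $z = j_1(z)$ and the $j_2(q_{ij})$. Since $t_{ij} = j_1(z^i)j_2(q_{ij})$ and $j_1(z)$ is a unitary with $z^N = 1$, we recover $j_2(q_{ij}) = j_1(z^{-i})t_{ij} = j_1(z^{N-i})t_{ij}$ from $z$ and the $t_{ij}$. Hence the entries of $z \oplus t$, namely $z$ together with all $t_{ij}$, generate the whole $\textup{C}^*$-algebra. This is the routine step.

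For invertibility, $z$ is already unitary, and $t = (t_{ij})$ is unitary by Proposition \ref{prop:fundamental}, so $z \oplus t$ is unitary; the remaining content is that the conjugate matrix $\overline{z \oplus t} = (v_{ij}^*)$ is invertible (in ordinary Woronowicz language one needs the \emph{transpose} to be invertible, which for a unitary $v$ is equivalent to $\overline{v} = (v_{ij}^*)$ being invertible). For the $z$-block this is clear since $z^*$ is unitary. For the $t$-block, I would use that $q = (q_{ij})$ and $\overline{q}_{\textup{R}} = (\omega^{-i(j-i)}q_{ij}^*)_{i,j}$ are both unitary in $M_N(\textup{C}(\textup{S}_{N}^+(\textup{R})))$ — the unitarity of $q$ was established in the proof of Theorem \ref{thm:quotient}, and $\overline{q}_{\textup{R}}$ is unitary because $\phi: \textup{C}(\textup{U}_N^+(\textup{R})) \to \textup{C}(\textup{S}_N^+(\textup{R}))$ sends $\overline{u}_{\textup{R}}$ to $\overline{q}_{\textup{R}}$ and $\overline{u}_{\textup{R}}$ is unitary by Definition \ref{def:unitaryalg}. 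One then writes $t_{ij}^* = j_2(q_{ij}^*)j_1(z^{-i})$ and checks, using the commutation relation $j_1(z)j_2(q_{ij}) = \omega^{j-i}j_2(q_{ij})j_1(z)$ to move the $z$-powers past the $q$'s, that the matrix $\overline{t} = (t_{ij}^*)$ is a scalar rephasing of $j_1$ and $j_2$ applied to $\overline{q}_{\textup{R}}$-type entries and is therefore invertible (indeed one can exhibit its inverse explicitly from the unitarity of $\overline{q}_{\textup{R}}$). Equivalently, and perhaps more cleanly, one notes $\textup{S}_{N}^+(\textup{R}) \rtimes \mathbb{Z}_N$ is a compact quantum group by Proposition \ref{prop:bos}, $z \oplus t$ is a finite-dimensional unitary corepresentation whose coefficients generate $\textup{C}(\textup{S}_{N}^+(\textup{R}) \rtimes \mathbb{Z}_N)$, and invoke the standard fact that the contragredient of a finite-dimensional corepresentation of a compact quantum group always exists, which is exactly the statement that $\overline{z \oplus t}$ is invertible.

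The main obstacle is purely bookkeeping: tracking the root-of-unity phase factors when commuting $j_1(z^i)$ past $j_2(q_{ij})$ in the computation showing $\overline{t}$ is invertible, and being careful about the distinction between "transpose invertible" and "conjugate invertible" in the definition of a compact matrix quantum group. Conceptually nothing is hard; the cleanest write-up is to derive generation from Theorem \ref{thm:bosopermutation}, unitarity of $z \oplus t$ from Proposition \ref{prop:fundamental}, the corepresentation property from Proposition \ref{prop:fundamental} and $\Delta(z) = z \otimes z$, and then either do the short phase-chasing computation for $\overline{t}^{-1}$ using the unitarity of $\overline{q}_{\textup{R}}$ (from the proof of Theorem \ref{thm:quotient}), or simply cite the existence of contragredients for finite-dimensional corepresentations of the compact quantum group $\textup{S}_{N}^+(\textup{R}) \rtimes \mathbb{Z}_N$ furnished by Proposition \ref{prop:bos}.
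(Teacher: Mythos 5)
Your proposal is correct and follows essentially the same route as the paper: generation of $\textup{C}(\textup{S}_{N}^+(\textup{R}) \rtimes \mathbb{Z}_N)$ from Theorem \ref{thm:bosopermutation} together with Proposition \ref{prop:fundamental}, unitarity of $z \oplus t$ from Proposition \ref{prop:fundamental}, and invertibility of $\overline{z}\oplus\overline{t}$ via exactly the phase-chasing identity $t_{ij}^*=z^{-i}(\overline{q}_{\textup{R}})_{ij}$ combined with the unitarity of $\overline{q}_{\textup{R}}$ established in the proof of Theorem \ref{thm:quotient}. The alternative you mention (citing existence of contragredients for finite-dimensional unitary corepresentations of the compact quantum group furnished by Proposition \ref{prop:bos}) is a valid shortcut, but the paper opts for the explicit computation you describe as your primary route.
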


The proof is similar to the proof of \cite{BJR2022}*{Corollary 3.5}. Again, for the sake of completeness, we state the corresponding results for the anyonic free unitary quantum groups.

\begin{proposition}\label{prop:fundamentalunitary}
Let $t'_{ij}=j_1(z^{i})j_2(u_{ij}) \in \textup{C}(\textup{U}_{N}^+(\textup{R}) \rtimes \mathbb{Z}_N)=\textup{C}(\mathbb{Z}_N) \boxtimes_{\textup{R}} \textup{C}(\textup{U}_{N}^+(\textup{R}))$ for $i,j \in \mathbb{Z}_N$. Then $t'=(t'_{ij})_{i,j \in \mathbb{Z}_N} \in M_N(\textup{C}(\textup{U}_{N}^+(\textup{R}) \rtimes \mathbb{Z}_N))$ defines a finite dimensional unitary representation of the compact quantum group $\textup{U}_{N}^+(\textup{R}) \rtimes \mathbb{Z}_N$.   
\end{proposition}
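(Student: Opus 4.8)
The plan is to mirror the proof of Proposition \ref{prop:fundamental}, since Proposition \ref{prop:fundamentalunitary} is its exact analogue with $q_{ij}$ replaced by $u_{ij}$ and $\textup{S}_N^+(\textup{R})$ replaced by $\textup{U}_N^+(\textup{R})$; indeed the author has essentially announced this by the phrase ``for the sake of completeness.'' First I would verify that $t'=(t'_{ij})$ is a unitary matrix in $M_N(\textup{C}(\textup{U}_N^+(\textup{R})\rtimes\mathbb{Z}_N))$. Using $t'_{ij}=j_1(z^i)j_2(u_{ij})$ together with the commutation relations from Theorem \ref{thm:bosounitary} (equivalently, the homogeneity of $u_{ij}$ of degree $j-i$), one computes
\[
\sum_{k\in\mathbb{Z}_N}(t'_{ki})^*t'_{kj}=\sum_{k\in\mathbb{Z}_N}j_2(u^*_{ki})j_1(z^{-k})j_1(z^{k})j_2(u_{kj})=\sum_{k\in\mathbb{Z}_N}j_2(u^*_{ki})j_2(u_{kj})=\sum_{k\in\mathbb{Z}_N}j_2(u^*_{ki}u_{kj})=\delta_{ij},
\]
where the last equality uses that $u$ is a unitary matrix in $M_N(\textup{C}(\textup{U}_N^+(\textup{R})))$ (Definition \ref{def:unitaryalg}) and that $j_2$ is a unital $*$-homomorphism. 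Similarly,
\[
\sum_{k\in\mathbb{Z}_N}t'_{ik}(t'_{jk})^*=\sum_{k\in\mathbb{Z}_N}j_1(z^i)j_2(u_{ik})j_2(u^*_{jk})j_1(z^{-j})=j_1(z^i)\,\delta_{ij}\,j_1(z^{-j})=\delta_{ij},
\]
so $t'$ is unitary.

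Next I would check that $t'$ is a representation, i.e.\ $\Delta_{\textup{U}_N^+(\textup{R})\rtimes\mathbb{Z}_N}(t'_{ij})=\sum_{k\in\mathbb{Z}_N}t'_{ik}\otimes t'_{kj}$. Using multiplicativity of $\Delta_{\textup{U}_N^+(\textup{R})\rtimes\mathbb{Z}_N}$, the formula $\Delta_{\textup{U}_N^+(\textup{R})\rtimes\mathbb{Z}_N}(z)=z\otimes z$, and $\Delta_{\textup{U}_N^+(\textup{R})\rtimes\mathbb{Z}_N}(u_{ij})=\sum_{k}u_{ik}\otimes z^{k-i}u_{kj}$ from Theorem \ref{thm:bosounitary}, one gets (writing $z$, $u_{ij}$ for $j_1(z)$, $j_2(u_{ij})$)
\[
\Delta(t'_{ij})=\Delta(z^i)\Delta(u_{ij})=(z^i\otimes z^i)\Bigl(\sum_{k\in\mathbb{Z}_N}u_{ik}\otimes z^{k-i}u_{kj}\Bigr)=\sum_{k\in\mathbb{Z}_N}z^iu_{ik}\otimes z^ku_{kj}=\sum_{k\in\mathbb{Z}_N}t'_{ik}\otimes t'_{kj}.
\]
Together with the unitarity established above, this shows $t'$ is a finite dimensional unitary representation of $\textup{U}_N^+(\textup{R})\rtimes\mathbb{Z}_N$, completing the proof.

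There is no real obstacle here: every ingredient—the explicit presentation and comultiplication of the bosonization from Theorem \ref{thm:bosounitary}, the unitarity of $u$ and $\overline{u}_{\textup{R}}$ built into Definition \ref{def:unitaryalg}, and the commutation relation $zu_{ij}=\omega^{j-i}u_{ij}z$—is already in place, and the computation is formally identical to that of Proposition \ref{prop:fundamental}. The only point requiring a modicum of care is bookkeeping with the powers of $z$ when sliding $j_1(z^{\pm k})$ past $j_2(u_{kj})$ and when multiplying out $\Delta$, but this is exactly the same manipulation as in the $\textup{S}_N^+(\textup{R})$ case. Accordingly, I would simply write ``The proof is similar to the proof of Proposition \ref{prop:fundamental} and so we omit it,'' matching the style the author uses for the parallel statements (Theorem \ref{thm:bosounitary}, and the propositions on $\rho^{\textup{C}(\textup{U}_N^+(\textup{R}))}$ and $\Delta_{\textup{U}_N^+(\textup{R})}$).
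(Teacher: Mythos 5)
Your proposal is correct and matches the paper exactly: the paper's proof of Proposition \ref{prop:fundamentalunitary} consists precisely of the sentence ``The proof is similar to the proof of Proposition \ref{prop:fundamental} and so we omit it,'' and the computations you spell out (unitarity of $t'$ from the unitarity of $u$ and the homogeneity of $u_{ij}$, plus the comultiplication check via Theorem \ref{thm:bosounitary}) are the same ones carried out in the proof of Proposition \ref{prop:fundamental} with $q_{ij}$ replaced by $u_{ij}$. No gaps.
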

    
\begin{corollary}\label{cor:cqmgunitary}
The pair $(\textup{C}(\textup{U}_{N}^+(\textup{R}) \rtimes \mathbb{Z}_N), z \oplus t')$ is a compact matrix quantum group.
\end{corollary}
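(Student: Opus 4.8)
The plan is to mirror the proof of Corollary \ref{cor:cqmgpermutation} essentially word for word, with $\textup{C}(\textup{S}_{N}^+(\textup{R}))$ and the generators $q_{ij}$ replaced by $\textup{C}(\textup{U}_{N}^+(\textup{R}))$ and the generators $u_{ij}$. To exhibit $(\textup{C}(\textup{U}_{N}^+(\textup{R}) \rtimes \mathbb{Z}_N), z \oplus t')$ as a compact matrix quantum group we have to verify three things: that the matrix coefficients of $z \oplus t'$ generate the $\textup{C}^*$\nobreakdash-algebra, that $z \oplus t'$ is invertible, and that its entrywise conjugate $\overline{z} \oplus \overline{t'}$ is invertible.

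For the first two points the argument is immediate. By Theorem \ref{thm:bosounitary}, $\textup{C}(\textup{U}_{N}^+(\textup{R}) \rtimes \mathbb{Z}_N)$ is generated by $z$ and the $u_{ij}$; since $t'_{ij}=j_1(z^{i})j_2(u_{ij})$, we recover $j_2(u_{ij})=j_1(z^{-i})t'_{ij}$, so $z$ together with the coefficients $t'_{ij}$ generate the whole algebra. And $z\oplus t'$ is invertible because $z$ is a unitary and, by Proposition \ref{prop:fundamentalunitary}, $t'$ is a unitary matrix.

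The one step requiring an actual computation is the invertibility of $\overline{z}\oplus\overline{t'}$, and it runs exactly as in the permutation case. The $\overline{z}$ block is $z^*=z^{-1}$, trivially invertible. For the $\overline{t'}$ block, iterating the relation $zu_{ij}=\omega^{j-i}u_{ij}z$ from Theorem \ref{thm:bosounitary} gives $z^{i}u_{ij}=\omega^{i(j-i)}u_{ij}z^{i}$, whence
\[
(t'_{ij})^{*}=u^{*}_{ij}z^{-i}=z^{-i}\,\omega^{i(i-j)}u^{*}_{ij}=z^{-i}\,(\overline{u}_{\textup{R}})_{ij},
\]
so that $\overline{t'}=\mathrm{diag}(z^{-i})_{i\in\mathbb{Z}_N}\,\overline{u}_{\textup{R}}$. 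By Definition \ref{def:unitaryalg} the matrix $\overline{u}_{\textup{R}}$ is a unitary, hence invertible, and the diagonal factor is manifestly invertible, so $\overline{t'}$ is invertible and therefore so is $\overline{z}\oplus\overline{t'}$. I do not anticipate any real obstacle: the reasoning is formally identical to that of Corollary \ref{cor:cqmgpermutation}, and the unitary case is if anything slightly cleaner, since here $\overline{u}_{\textup{R}}$ is a unitary by the very definition of $\textup{C}(\textup{U}_{N}^+(\textup{R}))$, whereas in the permutation case the unitarity of $\overline{q}_{\textup{R}}$ had to be established separately (Theorem \ref{thm:quotient}). The only points needing care are the bookkeeping of the phase $\omega^{i(i-j)}$ and the indexing of the diagonal matrix over $\mathbb{Z}_N$, both entirely routine.
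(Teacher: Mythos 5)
Your proposal is correct and follows exactly the route the paper intends: the paper's own proof of this corollary is simply ``similar to the proof of Corollary \ref{cor:cqmgpermutation}'', and your argument is precisely that proof transcribed to the unitary case, with the computation $\overline{t'}=\mathrm{diag}(z^{-i})_{i\in\mathbb{Z}_N}\,\overline{u}_{\textup{R}}$ carried out correctly. Your observation that the unitarity of $\overline{u}_{\textup{R}}$ here comes for free from Definition \ref{def:unitaryalg}, rather than needing the separate verification of Theorem \ref{thm:quotient}, is accurate.
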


We now come to the representation theory of $\textup{C}(\textup{S}^+_{N}(\mathrm{R}) \rtimes \mathbb{Z}_{N})$. We begin by observing that the $\textup{C}^*$\nobreakdash-algebra $\textup{C}(\textup{S}^+_{N}(\mathrm{R}) \rtimes \mathbb{Z}_{N})$ is the universal unital $\textup{C}^*$\nobreakdash-algebra generated by $z$ and $t_{ij}$ for $i,j \in \mathbb{Z}_{N}$, and that the matrix $t=(t_{ij})_{i,j \in \mathbb{Z}_{N}}$ satisfies Eq.\eqref{eq:rel-ord}. Furthermore, the relation $zt_{ij}=\omega^{i-j}t_{ij}z$ and its adjoint, yield equivalences $z \tenscorep t \cong t \tenscorep z$ and $z \tenscorep \overline{t} \cong \overline{t} \tenscorep z$, respectively. 

Now we consider the function algebra $\mathrm{C}(\mathrm{S}^{+}_{N})$ of the quantum permutation group, the generators being denoted by $x_{ij}$, $i,j \in \mathbb{Z}_{N}$. Multiplying the generators $x_{ij}$ by $\omega^{i-j}$, $i,j \in \mathbb{Z}_{N}$ yields an automorphism $\alpha$ of the $\textup{C}^*$\nobreakdash-algebra $\mathrm{C}(\mathrm{S}^{+}_{N})$. Following \cite{MR2021}, there is a Hopf $*$\nobreakdash-homomorphism $\phi : \mathrm{C}(\mathrm{S}^{+}_{N}) \rightarrow \textup{C}(\textup{S}^+_{N}(\mathrm{R}) \rtimes \mathbb{Z}_{N})$ mapping $x_{ij}$ to $\sum_{r,s \in \mathbb{Z}_{N}} \frac{1}{N}\omega^{-ir+sj}t_{rs}$, i.e., sends the matrix $x=(x_{ij})_{i,j \in \mathbb{Z}_{N}}$ to $\Omega t \Omega^{-1}$. The proof of the following is similar to Proposition 4.2 of \cite{MR2021}.

\begin{proposition}
The Hopf $*$\nobreakdash-homomorphism $\phi : \mathrm{C}(\mathrm{S}^{+}_{N}) \rightarrow \textup{C}(\textup{S}^+_{N}(\mathrm{R}) \rtimes \mathbb{Z}_{N})$ extends to an isomorphism $\mathrm{C}(\mathrm{S}^{+}_{N}) \rtimes_{\alpha} \widehat{\mathbb{Z}_{N}} \cong \textup{C}(\textup{S}^+_{N}(\mathrm{R}) \rtimes \mathbb{Z}_{N})$.
\end{proposition}

The Hopf $*$\nobreakdash-homomorphism $\phi$ induces a fully faithful strict tensor functor $\phi_{\ast}$ from the representation category of $\mathrm{C}(\mathrm{S}^{+}_{N})$ to that of $\textup{S}^+_{N}(\mathrm{R}) \rtimes \mathbb{Z}_{N}$. The irreducible representations of the quantum permutation group, as described by Banica \cite{banica:generic-coaction}, are enumerated by $r_{a}$, $a \in \mathbb{N}$, with $r_{0}=1$, $r_{1}=x$ and the fusion rule is,
\begin{equation*}
r_{a} \tenscorep r_{b} \cong r_{|a-b|} \oplus r_{|a-b|+1} \oplus \dots \oplus r_{a+b}. 
\end{equation*}
A lemma analogous to Lemma 4.3 of \cite{MR2021} says that the representations $z^a \tenscorep \phi_{\ast}(r_{a'})$ with $0 \leq a \leq N-1$ and $a' \in \mathbb{N}$ are all irreducible and distinct; furthermore, any irreducible representation is of this form. And finally, the fusion rules then become
\begin{equation*}
(z^a \tenscorep r_{a'}) \tenscorep (z^b \tenscorep r_{b'})\cong z^{a+b} \tenscorep r_{|a'-b'|} \oplus r_{|a'-b'|+1} \oplus \dots \oplus r_{a'+b'}. 
\end{equation*}
We note, by following the same arguments as in \cite{MR2021}, that the representation category of $\textup{S}_{N}^+(\mathrm{R})$ is equivalent to that of the bosonization $\textup{S}_{N}^+(\mathrm{R}) \rtimes \mathbb{Z}_{N}$. Collecting these together, we end this article with the following theorem.

\begin{theorem}
The anyonic quantum permutation group $\textup{S}_{N}^+(\mathrm{R})$ has irreducible representations $r_{(a,a')}$, $0 \leq a \leq N-1$, $a' \in \mathbb{N}$ such that any irreducible representation is unitarily equivalent to exactly one of these and moreover they satisfy the fusion rule
\begin{equation*}
r_{(a,a')} \tenscorep r_{(b,b')} \cong r_{(a+b,|a'-b'|)} \oplus r_{(a+b,|a'-b'|+1)} \oplus \dots \oplus r_{(a+b,a'+b')}. 
\end{equation*}
\end{theorem}

\begin{bibdiv}
  \begin{biblist}
    \bib{banica:generic-coaction}{article}{
   author={Banica, T.},
   title={Symmetries of a generic coaction},
   journal={Math. Ann.},
   volume={314},
   date={1999},
   number={4},
   pages={763--780},
   issn={0025-5831},
%   review={\MR{1709109}},
   doi={10.1007/s002080050315},
}

    \bib{banicagraphs}{article}{
        author={Banica, T.},
        title={Quantum automorphism groups of homogeneous graphs},
        date={2005},
        issn={0022-1236},
        journal={J. Funct. Anal.},
        volume={224},
        number={2},
        pages={243\ndash 280},
        doi={10.1016/j.jfa.2004.11.002},
       % review={\MR {2146039}},
        }
\bib{BBC2007}{article}{
        author={Banica, T.},
        author={Bichon, J.},
        author={Collins, B.},
        title={The hyperoctahedral quantum group},
        journal={J. Ramanujan Math. Soc.},
        volume={22},
        date={2007},
        number={4},
        pages={345--384},
        issn={0970-1249},
       % review={\MR{2376808}},
        }                                          

        \bib{BJR2022}{article}{
        author={Bhattacharjee, S.},
        author={Joardar, S.},
        author={Roy, S.},
        title={Braided quantum symmetries of graph $\textup{C}^*$\nobreakdash-algebras},
        date={2022},
        eprint={https://arxiv.org/abs/2201.09885},
        }
        
        % \bib{bmrs2019}{article}{
        % author={Bhowmick, J.},
        % author={Mandal, A.},
        % author={Roy, S.},
        % author={Skalski, A.},
        % title={Quantum symmetries of the twisted tensor products of $\textup{C}^*$\nobreakdash-algebras},
        % journal={Comm. Math. Phys.},
        % volume={368},
        % date={2019},
        % number={3},
        % pages={1051--1085},
        % issn={0010-3616},
        % review={\MR{3951700}},
        % doi={10.1007/s00220-018-3279-5},
        % }
        
        % \bib{br}{book}{
        % author={Bratteli, O.},
        % author={Robinson, D.W.},
        % title={Operator algebras and quantum statistical mechanics. 2},
        % series={Texts and Monographs in Physics},
        % edition={2},
        % note={Equilibrium states. Models in quantum statistical mechanics},
        % publisher={Springer-Verlag, Berlin},
        % date={1997},
        % pages={xiv+519},
        % isbn={3-540-61443-5},
        % review={\MR{1441540}},
        % doi={10.1007/978-3-662-03444-6},
        % }
        
        % \bib{bs2013}{article}{
        % author={Banica, T.},
        % author={Skalski, A.},
        % title={Quantum symmetry groups of $\textup{C}^*$\nobreakdash-algebras equipped with
        % orthogonal filtrations},
        % journal={Proc. Lond. Math. Soc. (3)},
        % volume={106},
        % date={2013},
        % number={5},
        % pages={980--1004},
        % issn={0024-6115},
        % review={\MR{3066746}},
        % doi={10.1112/plms/pds071},
        % }
        
        \bib{connes}{book}{
        author={Connes, A.},
        title={Noncommutative geometry},
        publisher={Academic Press, Inc., San Diego, CA},
        date={1994},
        pages={xiv+661},
        isbn={0-12-185860-X},
%        review={\MR{1303779}},
        }
        
        % \bib{drinfeld}{article}{
        % author={Drinfel\cprime d, V.G.},
        % title={Quantum groups},
        % conference={
        % title={Proceedings of the International Congress of Mathematicians,
        % Vol. 1, 2},
        % address={Berkeley, Calif.},
        % date={1986},
        % },
        % book={
        % publisher={Amer. Math. Soc., Providence, RI},
        % },
        % date={1987},
        % pages={798--820},
        % review={\MR{934283}},
        % }
        
        % \bib{GJ2018}{article}{
        % author={Goswami, D.},
        % author={Joardar, S.},
        % title={Non-existence of faithful isometric action of compact quantum groups on compact, connected {R}iemannian manifolds},
        % date={2018},
        % issn={1016-443X},
        % journal={Geom. Funct. Anal.},
        % volume={28},
        % number={1},
        % pages={146\ndash 178},
        % doi={10.1007/s00039-018-0437-z},
        % review={\MR {3777415}},
        % }
        
        % \bib{goswamiadv}{article}{
        % author={Goswami, D.},
        % title={Non-existence of genuine (compact) quantum symmetries of compact, connected smooth manifolds},
        % date={2020},
        % issn={0001-8708},
        % journal={Adv. Math.},
        % volume={369},
        % pages={107181, 25},
        % doi={10.1016/j.aim.2020.107181},
        % review={\MR {4092984}},
        % }
        
%         \bib{HG2021}{article}{
%             author={Goswami, D.},
%             author={Hossain, Sk.A.},
%             title={Quantum symmetry on Potts model},
%             journal={J. Math. Phys.},
%             volume={63},
%             date={2022},
%             number={4},
%             pages={Paper No. 043504, 14},
%             issn={0022-2488},
% %            review={\MR{4406116}},
%             doi={10.1063/5.0083709},
%           }

          \bib{cocycle-twist}{article}{
   author = {Habbestad, E.},
   author={Neshveyev, S.},
        title = {Cocycle twisting of semidirect products and transmutation},
         date = {2023},
         eprint = {https://arxiv.org/abs/2304.00494},
       }

        % \bib{JM2018}{article}{
        % author={Joardar, S.},
        % author={Mandal, A.},
        % title={Quantum symmetry of graph $\textup{C}^*$\nobreakdash-algebras associated with connected graphs},
        % date={2018},
        % issn={0219-0257},
        % journal={Infin. Dimens. Anal. Quantum Probab. Relat. Top.},
        % volume={21},
        % number={3},
        % pages={1850019, 18},
        % doi={10.1142/S0219025718500194},
        % review={\MR {3860252}},
        % }
             
        % \bib{JM2021}{article}{
        % author={Joardar, S.},
        % author={Mandal, A.},
        % title={Quantum symmetry of graph $\textup{C}^*$\nobreakdash-algebras at critical inverse temperature},
        % journal={Studia Math.},
        % volume={256},
        % date={2021},
        % number={1},
        % pages={1--20},
        % issn={0039-3223},
        % review={\MR{4159553}},
        % doi={10.4064/sm190102-30-9},
        % }

        \bib{MR1258379}{article}{
   author={Majid, S.},
   title={Anyonic quantum groups},
   conference={
      title={Spinors, twistors, Clifford algebras and quantum deformations},
      address={Sob\'{o}tka Castle},
      date={1992},
   },
   book={
      series={Fund. Theories Phys.},
      volume={52},
      publisher={Kluwer Acad. Publ., Dordrecht},
   },
   date={1993},
   pages={327--336},
%   review={\MR{1258379}},
}        
\bib{MR1257312}{article}{
   author={Majid, S.},
   title={Cross products by braided groups and bosonization},
   journal={J. Algebra},
   volume={163},
   date={1994},
   number={1},
   pages={165--190},
   issn={0021-8693},
   doi={10.1006/jabr.1994.1011},
}
\bib{MR1415857}{article}{
   author={Majid, S.},
   title={Introduction to braided geometry and $q$-Minkowski space},
   conference={
      title={Quantum groups and their applications in physics},
      address={Varenna},
      date={1994},
   },
   book={
      series={Proc. Internat. School Phys. Enrico Fermi},
      volume={127},
      publisher={IOS, Amsterdam},
   },
   date={1996},
   pages={267--345},
%   review={\MR{1415857}},
}
        
        \bib{kmrw2016}{article}{
        author={Kasprzak, P.},
        author={Meyer, R.},
        author={Roy, S.},
        author={Woronowicz, S.L.},
        title={Braided quantum $\textup{SU}(2)$ groups},
        journal={J. Noncommut. Geom.},
        volume={10},
        date={2016},
        number={4},
        pages={1611--1625},
        issn={1661-6952},
%        review={\MR{3597153}},
        doi={10.4171/JNCG/268},
        }
        
        % \bib{kw2013}{article}{
        % author={Kajiwara, T.},
        % author={Watatani, Y.},
        % title={KMS states on finite-graph $\textup{C}^*$\nobreakdash-algebras},
        % journal={Kyushu J. Math.},
        % volume={67},
        % date={2013},
        % number={1},
        % pages={83--104},
        % issn={1340-6116},
        % review={\MR{3088995}},
        % doi={10.2206/kyushujm.67.83},
        % }
             
        % \bib{manin}{book}{
        % author={Manin, Y.I.},
        % title={Quantum groups and noncommutative geometry},
        % publisher={Universit\'{e} de Montr\'{e}al, Centre de Recherches Math\'{e}matiques, Montreal, QC},
        % date={1988},
        % pages={vi+91},
        % isbn={2-921120-00-3},
        % review={\MR{1016381}},
        % }
        \bib{montgomery}{book}{
   author={Montgomery, Susan},
   title={Hopf algebras and their actions on rings},
   series={CBMS Regional Conference Series in Mathematics},
   volume={82},
   publisher={Published for the Conference Board of the Mathematical
   Sciences, Washington, DC; by the American Mathematical Society,
   Providence, RI},
   date={1993},
   pages={xiv+238},
   isbn={0-8218-0738-2},
   % review={\MR{1243637}},
   doi={10.1090/cbms/082},
}
        \bib{MR2021}{article}{
        author={Meyer, R.},
        author={Roy, S.},
        title={Braided Free Orthogonal Quantum Groups},
        journal={Int. Math. Res. Not. IMRN},
        date={2022},
        number={12},
        pages={8890--8915},
        issn={1073-7928},
        %        review={\MR{4436198}},
       doi={10.1093/imrn/rnaa379},
        }
        
        \bib{mrw2014}{article}{
        author={Meyer, R.},
        author={Roy, S.},
        author={Woronowicz, S.L.},
        title={Quantum group-twisted tensor products of $\textup{C}^*$\nobreakdash-algebras},
        journal={Internat. J. Math.},
        volume={25},
        date={2014},
        number={2},
        pages={1450019, 37},
        issn={0129-167X},
%        review={\MR{3189775}},
        doi={10.1142/S0129167X14500190},
        }
            
        \bib{mrw2016}{article}{
        author={Meyer, R.},
        author={Roy, S.},
        author={Woronowicz, S.L.},
        title={Quantum group-twisted tensor products of $ \textup{C}^*$\nobreakdash-algebras. II},
        journal={J. Noncommut. Geom.},
        volume={10},
        date={2016},
        number={3},
        pages={859--888},
        issn={1661-6952},
%        review={\MR{3554838}},
        doi={10.4171/JNCG/250},
        }

        \bib{nestvoigt}{article}{
   author={Nest, R.},
   author={Voigt, C.},
   title={Equivariant Poincar\'{e} duality for quantum group actions},
   journal={J. Funct. Anal.},
   volume={258},
   date={2010},
   number={5},
   pages={1466--1503},
   issn={0022-1236},
%   review={\MR{2566309}},
   doi={10.1016/j.jfa.2009.10.015},
}
        
\bib{R1985}{article}{
  author={Radford, D.E.},
  title={The structure of {H}opf algebras with a projection},
  date={1985},
  issn={0021-8693},
  journal={J. Algebra},
  volume={92},
  number={2},
  pages={322\ndash 347},
  doi={10.1016/0021-8693(85)90124-3},
%  review={\MR {778452}},
}
        
        % \bib{rae2005}{book}{
        % author={Raeburn, I.},
        % title={Graph algebras},
        % series={CBMS Regional Conference Series in Mathematics},
        % volume={103},
        % publisher={Published for the Conference Board of the Mathematical
        % Sciences, Washington, DC; by the American Mathematical Society,
        % Providence, RI},
        % date={2005},
        % pages={vi+113},
        % isbn={0-8218-3660-9},
        % review={\MR{2135030}},
        % doi={10.1090/cbms/103},
        % }

        \bib{rahamanroy}{article}{
   author={Rahaman, A.},
   author={Roy, S.},
   title={Quantum $\rm E(2)$ groups for complex deformation parameters},
   journal={Rev. Math. Phys.},
   volume={33},
   date={2021},
   number={6},
   pages={Paper No. 2150021, 28},
   issn={0129-055X},
%   review={\MR{4277776}},
   doi={10.1142/S0129055X21500215},
}
        \bib{R2023}{article}{
   author={Roy, S.},
   title={Braided quantum groups and their bosonizations in the $\rm
   C^*$-algebraic framework},
   journal={Int. Math. Res. Not. IMRN},
   date={2023},
   number={14},
   pages={11791--11828},
   issn={1073-7928},
   doi={10.1093/imrn/rnac151},
}
        \bib{R2021}{article}{
        author={Roy, S.},
        title={Homogeneous quantum symmetries of finite spaces over the circle group},
        date={2021},
        eprint={https://arxiv.org/abs/2105.01556v2},
        }
        
        % \bib{soltan}{article}{
        % author={So\l tan, P.M.},
        % title={Podle\'{s} spheres for the braided quantum $\textup{SU}(2)$},
        % journal={Linear Algebra Appl.},
        % volume={591},
        % date={2020},
        % pages={169--204},
        % issn={0024-3795},
        % review={\MR{4053085}},
        % doi={10.1016/j.laa.2020.01.011},
        % }
        
%        \bib{sw2018}{article}{
%        author={Schmidt, Simon},
%        author={Weber, Moritz},
%        title={Quantum symmetries of graph $\textup{C}^*$\nobreakdash-algebras},
%        journal={Canad. Math. Bull.},
%        volume={61},
%        date={2018},
%        number={4},
%        pages={848--864},
%        issn={0008-4395},
%        review={\MR{3846752}},
%        doi={10.4153/CMB-2017-075-4},
%        }
        
        % \bib{tdec}{article}{
        % author={Thibault de Chanvalon, M.},
        % title={Quantum symmetry groups of Hilbert modules equipped with
        % orthogonal filtrations},
        % journal={J. Funct. Anal.},
        % volume={266},
        % date={2014},
        % number={5},
        % pages={3208--3235},
        % issn={0022-1236},
        % review={\MR{3158722}},
        % doi={10.1016/j.jfa.2013.10.020},
        % }
        
        \bib{vaes}{article}{
        author={Vaes, S.},
        title={A new approach to induction and imprimitivity results},
        journal={J. Funct. Anal.},
        volume={229},
        date={2005},
        number={2},
        pages={317--374},
        issn={0022-1236},
%        review={\MR{2182592}},
        doi={10.1016/j.jfa.2004.11.016},
        }
        
        \bib{wang}{article}{
        author={Wang, S.},
        title={Quantum symmetry groups of finite spaces},
        journal={Comm. Math. Phys.},
        volume={195},
        date={1998},
        number={1},
        pages={195--211},
        issn={0010-3616},
%        review={\MR{1637425}},
        doi={10.1007/s002200050385},
        }

        \bib{MR1081990}{collection}{
   author={Wilczek, F.},
   title={Fractional statistics and anyon superconductivity},
   publisher={World Scientific Publishing Co., Inc., Teaneck, NJ},
   date={1990},
   pages={x+447},
   isbn={981-02-0048-X},
   isbn={981-02-0049-8},
%   review={\MR{1081990}},
   doi={10.1142/0961},
}
             
        \bib{woropseudo}{article}{
        author={Woronowicz, S.L.},
        title={Compact matrix pseudogroups},
        journal={Comm. Math. Phys.},
        volume={111},
        date={1987},
        number={4},
        pages={613--665},
        issn={0010-3616},
%        review={\MR{901157}},
        }
             
        \bib{worosu}{article}{
        author={Woronowicz, S.L.},
        title={Twisted $\textup{SU}(2)$ group. An example of a noncommutative
        differential calculus},
        journal={Publ. Res. Inst. Math. Sci.},
        volume={23},
        date={1987},
        number={1},
        pages={117--181},
        issn={0034-5318},
%        review={\MR{890482}},
        doi={10.2977/prims/1195176848},
        }

        \bib{MR1647968}{article}{
   author={Woronowicz, S.L.},
   title={An example of a braided locally compact group},
   conference={
      title={Quantum groups},
      address={Karpacz},
      date={1994},
   },
   book={
      publisher={PWN, Warsaw},
   },
   date={1995},
   pages={155--171},
%   review={\MR{1647968}},
}

        \bib{Worosymm}{article}{
   author={Woronowicz, S.L.},
   title={Compact quantum groups},
   conference={
      title={Sym\'{e}tries quantiques},
      address={Les Houches},
      date={1995},
   },
   book={
      publisher={North-Holland, Amsterdam},
   },
   date={1998},
   pages={845--884},
%   review={\MR{1616348}},
}

\end{biblist}
\end{bibdiv}

\end{document}